\numberwithin{equation}{section}
\numberwithin{figure}{section}
\theoremstyle{plain}
\newtheorem{thm}{\protect\theoremname}
  \theoremstyle{definition}
  \newtheorem{defn}[thm]{\protect\definitionname}
  \theoremstyle{remark}
  \newtheorem*{rem*}{\protect\remarkname}
  \theoremstyle{definition}
  \newtheorem*{example*}{\protect\examplename}
  \theoremstyle{plain}
  \newtheorem{lem}[thm]{\protect\lemmaname}
  \theoremstyle{plain}
  \newtheorem{cor}[thm]{\protect\corollaryname}
  \theoremstyle{definition}
  \newtheorem{example}[thm]{\protect\examplename}
  \theoremstyle{plain}
  \newtheorem{prop}[thm]{\protect\propositionname}
  \theoremstyle{remark}
  \newtheorem*{claim*}{\protect\claimname}
  \providecommand{\claimname}{Claim}
  \providecommand{\corollaryname}{Corollary}
  \providecommand{\definitionname}{Definition}
  \providecommand{\examplename}{Example}
  \providecommand{\lemmaname}{Lemma}
  \providecommand{\propositionname}{Proposition}
  \providecommand{\remarkname}{Remark}
\providecommand{\theoremname}{Theorem}
\begin{document}

\title{Uniformly convex metric spaces}

\author{Martin Kell}
\begin{abstract}
In this paper the theory of uniformly convex metric spaces is developed.
These spaces exhibit a generalized convexity of the metric from a
fixed point. Using a (nearly) uniform convexity property a simple
proof of reflexivity is presented and a weak topology of such spaces
is analyzed. This topology called co-convex topology agrees with the
usualy weak topology in Banach spaces. An example of a $CAT(0)$-spaces
with weak topology which is not Hausdorff is given. This answers questions
raised by Monod 2006, Kirk and Panyanak 2008 and Esp\'inola and Fern\'andez-Le\'on
2009. 

In the end existence and uniqueness of generalized barycenters is
shown and a Banach-Saks property is proved.
\end{abstract}

\thanks{The author wants to thank Prof. Jürgen Jost and the MPI MiS for providing
a stimulating research environment. Also thanks to Miroslav Ba{\v{c}}\'ak
for helpful explanations of $CAT(0)$-space and remarks on an early
version of the paper which simplified some of the statements. }

\date{\today }

\email{mkell@mis.mpg.de}

\address{Max-Planck-Institute for Mathematics in the Sciences, Inselstr. 22,
04103 Leipzig, Germany}

\maketitle
\global\long\def\conv{\operatorname{conv}}
\global\long\def\esup{\operatorname*{ess\, sup}}
\global\long\def\supp{\operatorname{supp}}
\global\long\def\argmin{\operatorname*{arg\, min}}
\global\long\def\dom{\operatorname{dom}}
\global\long\def\sep{\operatorname{sep}}

\global\long\def\Var{\operatorname{Var}}
\global\long\def\diam{\operatorname{diam}}
\global\long\def\Lim{\operatorname{Lim}}

In this paper we summarize and extend some facts about convexities
of the metric from a fixed point and give simpler proofs which also
work for general metric spaces. In its simplest form this convexity
of the metric just requires balls to be convex or that $x\mapsto d(x,y)$
is convex for every fixed $y\in X$. It is easy to see that both conditions
are equivalent on normed spaces with strictly convex norm. However,
in \cite{Busemann1979} (see also \cite[Example 1]{Foertsch2004})
Busemann and Phadke constructed spaces whose balls are convex but
its metric is not. Nevertheless, a geometric condition called non-positive
curvature in the sense of Busemann (see \cite{Bridson1999,Bacak2014a})
implies that both concepts are equivalent, see \cite[Proposition 1]{Foertsch2004}.

The study of stronger convexities for Banach spaces \cite{Clarkson1936}
has a long tradition. In the non-linear setting so called $CAT(0)$-spaces
are by now well-understood, see \cite{Bridson1999,Bacak2014a}. Only
recently Kuwae \cite{Kuwae2013} based on \cite{Noar2011} studied
spaces with a uniformly $p$-convexity assumption similar to that
of Banach spaces. Related to this are Ohta's convexities definitions
\cite{Ohta2007} which, however, seem more restrictive than the ones
defined in this paper.

In the first section of this article an overview of convexities of
the metric and some easy implications are given. Then existence of
the projection map onto convex subsets and existence and uniqueness
of barycenters of measures is shown. For this we give simple proofs
using an old concept introduced by Huff in \cite{Huff1980}. 

In the third section we introduce weak topologies. The lack of a naturally
defined dual spaces similar to Banach space theory requires a more
direct definition either via convex sets, i.e. the co-convex topology
(first appeared in \cite{Monod2006}), or via asymptotic centers (see
historical remark at the end of \cite[Chapter 3]{Bacak2014a}). Both
topologies might not be equivalent and/or comparable. For $CAT(0)$-spaces
it is easy to show that the convergence via asymptotic centers is
stronger than the co-convex topology. However, the topologies do not
agree in general, see Example \ref{ex:cone-hilbert}. With this example
we answer questions raised in \cite{Kirk2008} and \cite{Espinola2009}. 

In the last sections, we use the results show existence of generalized
barycenters and prove the Banach-Saks property. The proof extends
a proof recently found by Yokota \cite{Yokota} in the setting of
$CAT(1)$-spaces with small diameter. In the end a discussion about
further extending convexities is given.

\section*{Convexity of the metric}

Let $(X,d)$ be a complete metric space. We say that $(X,d)$ admits
midpoints if for every $x,y\in X$ there is an $m(x,y)$ such that
$d(x,m(x,y))=d(y,m(x,y))=\frac{1}{2}d(x,y)$. One easily sees that
each such space is a geodesic space.

Now for $p\in[1,\infty)$ and all non-negative real numbers $a,b$
we define the $p$-mean 
\[
\mathcal{M}^{p}(a,b):=\left(\frac{1}{2}a^{p}+\frac{1}{2}b^{p}\right)^{\frac{1}{p}}.
\]
 Furthermore, the case $p=\infty$ can be defined as a limit, i.e.
$\mathcal{M}_{t}^{\infty}(a,b)=\max\{a,b\}$.
\begin{defn}
[$p$-convexity]Suppose the metric space $(X,d)$ admits midpoints.
Then it is called $p$-convex for some $p\in[1,\infty]$ if for each
triple $x,y,z\in X$ and each midpoint $m(x,y)$ of $x$ and $y$
\[
d(m(x,y),z)\le\mathcal{M}^{p}(d(x,z),d(y,z)).
\]

It is called strictly $p$-convex for $p\in(1,\infty]$ if the inequality
is strict whenever $x\ne y$ and strictly $1$-convex if the inequality
is strict whenever $d(x,y)>|d(x,z)-d(y,z)|$.\end{defn}
\begin{rem*}
(1) In \cite{Foertsch2004} Foertsch defined $1$-convexity and $\infty$-convexity
under the name distance convexity and ball convexity.
\end{rem*}
A $p$-convex space is $p'$-convex for all $p'\ge p$ and it is easy
to see that balls are convex iff the space is $\infty$-convex. Furthermore,
one sees that any strictly $\infty$-convex space is uniquely geodesic.

Instead of just requiring convexity from a fixed point one can assume
that assume a convexity of $t\mapsto d(x_{t},y_{t})$ there $x_{t}$
and $y_{t}$ are constant speed geodesics. This gives the following
condition.
\begin{defn}
[$p$-Busemann curvature] A metric space $(X,d)$ admitting midpoints
is said to satisfy the $p$-Busemann curvature condition for some
$p\in[1,\infty]$ if for all triples quadruples $x_{0},x_{1},y_{0},y_{1}\in X$
with midpoints $x_{\frac{1}{2}}=m(x_{0},x_{1})$ and $y_{\frac{1}{2}}=m(y_{0},y_{1})$
it holds 
\[
d(x_{\frac{1}{2}},y_{\frac{1}{2}})\le\mathcal{M}^{p}(d(x_{0},y_{0}),d(x_{1},y_{1})).
\]
In such a case we will say that $(X,d)$ is $p$-Busemann.
\end{defn}
It is not difficult to show (see e.g. \cite[Proposition 1.1.5]{Bacak2014a})
that in case $p\in[1,\infty)$ this is equivalent to the more traditional
form: for each triples $x,y,z\in X$ with midpoints $m_{1}=m(x,z)$
and $m_{2}=m(y,z)$ it holds 
\[
d(m_{1},m_{2})^{p}\le\frac{1}{2}d(x,y)^{p}.
\]
In particular for $p=1$, this is Busemann's original non-positive
curvature assumption. In this case we will just say that $(X,d)$
is Busemann. Busemann's condition can be used to show equivalence
of all (strict/uniform) $p$-convexity, see Corollary \ref{cor:Busemann-conv}.
Currently we cannot prove that on $p$-Busemann spaces (strict/uniform)
$p$-convexity is equivalent to (strict/uniform) $p'$-convexity for
all $p'\ge p$. However, it can be used to get a $p$-Wasserstein
contraction of $2$-barycenters if Jensen's inequality holds on the
space, see Proposition \ref{prop:Jensen-p-Busemann} below.

In \cite{Foertsch2004} Foertsch also defines uniform distance/ball
convexity. We adapt his definition as follows:
\begin{defn}
[uniform $p$-convexity] Suppose $(X,d)$ admits midpoints and let
$p\in[1,\infty]$. Then we say it is uniformly $p$-convex if for
all $\epsilon>0$ there is a $\rho_{p}(\epsilon)\in(0,1)$ such that
for all triples $x,y,z\in X$ satisfying $d(x,y)>\epsilon\mathcal{M}^{p}(d(x,z),d(y,z))$
for $p>1$ and $d(x,y)>|d(x,z)-d(y,z)|+\epsilon\mathcal{M}^{1}(d(x,z),d(y,z))$
for $p=1$ it holds
\[
d(m(x,y),z)\le(1-\rho_{p}(\epsilon))\mathcal{M}^{p}(d(x,z),d(y,z)).
\]
\end{defn}
\begin{rem*}
(1) W.l.o.g. we assume that $\rho_{p}$ is monotone in $\epsilon$
so that $\rho_{p}(\epsilon)\to0$ requires $\epsilon\to0$.

(2) Uniform $p$-convexity for $p\in(1,\infty)$ is equivalent to
the existence of a $\tilde{\rho}_{p}(\epsilon)>0$ such that 
\[
d(m(x,y),z)^{p}\le(1-\tilde{\rho}_{p}(\epsilon))\mathcal{M}^{p}\left(d(x,z),d(x,z)\right)^{p},
\]
just let $\tilde{\rho}_{p}(\epsilon)=1-(1-\rho_{p}(\epsilon))^{p}$.

(3) The usual definition of uniform convexity for functions is as
follows: A function $f$ is uniformly convex if for $x,y\in X$ with
midpoint $m$: 
\[
f(m)\le\frac{1}{2}f(x)+\frac{1}{2}f(y)-\omega(d(x,y)),
\]
where $\omega$ is the modulus of convexity and $\omega(r)>0$ if
$r>0$. For $p\ge2$ and $\omega(r)=Cr^{p}$ and $f=d(\cdot,z)^{p}$
one recovers Kuwae's $p$-uniform convexity \cite{Kuwae2013}. However,
in this form one does not see whether $p$-convexity implies $p'$-convexity.
Furthermore, one gets a restriction that $\omega(r)\ge Cr^{2}$, i.e.
the cases $p\in(1,2)$ are essentially excluded. And whereas our definition
is multiplicative, matching the fact that $Cd(\cdot,\cdot)$ is also
a metric, the usual uniform convexity is only multiplicative by adjusting
the modulus of convexity. \end{rem*}
\begin{example*}
(1) Every $CAT(0)$-space is uniformly $2$-convex with $\tilde{\rho}(\epsilon)=\left(\frac{\epsilon}{2}\right)^{2}$.
More generally any $R_{\kappa}$-domain of a $CAT(\kappa)$-space
is uniformly $2$-convex with $\rho(\epsilon)=c_{\kappa}\epsilon^{2}$.

(2) Every $p$-uniformly convex space as defined in \cite{Noar2011,Kuwae2013}
is uniformly $p$-convex with $\rho(\epsilon)=c_{k}\epsilon{}^{p}$.\end{example*}
\begin{lem}
A uniformly $p$-convex metric space $(X,d)$ is uniformly $p'$-convex
for all $p'\ge p$.\end{lem}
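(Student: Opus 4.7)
The key observation is the monotonicity of the $p$-mean: for $1 \le p \le p' \le \infty$ and $a, b \ge 0$, one has $\mathcal{M}^p(a, b) \le \mathcal{M}^{p'}(a, b)$, with strict inequality whenever $a \ne b$ and $p < p'$ (a standard consequence of Jensen's inequality).

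For the principal case $p > 1$ the proof is immediate. If $d(x, y) > \epsilon\, \mathcal{M}^{p'}(d(x, z), d(y, z))$, then monotonicity gives $d(x, y) > \epsilon\, \mathcal{M}^p(d(x, z), d(y, z))$, and uniform $p$-convexity yields
\[
d(m(x, y), z) \le (1 - \rho_p(\epsilon))\, \mathcal{M}^p \le (1 - \rho_p(\epsilon))\, \mathcal{M}^{p'},
\]
so one may simply take $\rho_{p'}(\epsilon) := \rho_p(\epsilon)$.

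The case $p = 1$, $p' > 1$ is more delicate because of the extra term $|a - b|$ (with $a := d(x, z)$, $b := d(y, z)$) in the uniform $1$-convex hypothesis. First I would note that uniform $1$-convexity implies the plain inequality $d(m, z) \le \mathcal{M}^1(a, b)$ for every triple $(x, y, z)$: when $d(x, y) > |a - b|$ strictly, let $\epsilon' \to 0^+$ in the uniform estimate; when $d(x, y) = |a - b|$ (say $a \ge b$), the triangle inequality $d(m, z) \le d(m, y) + d(y, z) = \tfrac{1}{2} d(x, y) + b$ already yields $\mathcal{M}^1$. Given the hypothesis $d(x, y) > \epsilon\, \mathcal{M}^{p'}(a, b)$, I would then split on whether $|a - b| \ge \tfrac{\epsilon}{2}\, \mathcal{M}^{p'}(a, b)$ or not. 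In the \emph{large-gap} case, scale-invariance reduces the estimate to the compact set $\{(a, b) : \mathcal{M}^{p'}(a, b) = 1,\ |a - b| \ge \epsilon/2\}$, on which the strict power-mean inequality produces some $f_0 = f_0(\epsilon, p') < 1$ with $\mathcal{M}^1(a,b) \le f_0\, \mathcal{M}^{p'}(a,b)$; combined with $d(m, z) \le \mathcal{M}^1$ this gives $d(m, z) \le f_0\, \mathcal{M}^{p'}$. In the \emph{small-gap} case the hypothesis upgrades to $d(x, y) > |a - b| + \tfrac{\epsilon}{2}\, \mathcal{M}^1(a, b)$, and uniform $1$-convexity applied with parameter $\epsilon/2$ directly yields $d(m, z) \le (1 - \rho_1(\epsilon/2))\, \mathcal{M}^{p'}$. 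Setting $\rho_{p'}(\epsilon) := \min\{1 - f_0,\ \rho_1(\epsilon/2)\} > 0$ completes the argument.

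The principal obstacle is the large-gap branch of the $p = 1$ case: uniform $1$-convexity itself has no content there, and the required strict contraction must be recovered purely from the strict gap between the $1$-mean and the $p'$-mean when $|a - b|$ is a definite fraction of $\mathcal{M}^{p'}$.
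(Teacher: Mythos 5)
Your proposal is correct, and its overall architecture coincides with the paper's: the case $p>1$ is handled by the monotonicity $\mathcal{M}^{p}\le\mathcal{M}^{p'}$ exactly as in the paper, and for $p=1$ both arguments split into the branch where uniform $1$-convexity applies with parameter $\epsilon/2$ and the branch where $|d(x,z)-d(y,z)|$ is at least $\tfrac{\epsilon}{2}\mathcal{M}^{p'}$ (your two branches are the same dichotomy as the paper's, just entered from the other side). The genuine difference is in how the large-gap branch is closed. The paper invokes Clarkson's inequalities, which forces a further case distinction between $p'\ge2$ and $1<p'<2$ (with conjugate exponents in the latter) but yields an explicit modulus $\rho_{p'}(\epsilon)$ of the form $c_{p'}(\epsilon/2)^{p'}$ up to normalization. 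You instead use scale invariance plus compactness of $\{(a,b):\mathcal{M}^{p'}(a,b)=1,\ |a-b|\ge\epsilon/2\}$ and the strict power-mean inequality to extract $f_{0}<1$; this is more elementary, treats all $p'\in(1,\infty]$ uniformly (including $p'=\infty$, which the paper outsources to Foertsch for the $p=1$ case), but sacrifices any explicit control on $\rho_{p'}$. A further small merit of your write-up: you explicitly derive the plain inequality $d(m,z)\le\mathcal{M}^{1}(d(x,z),d(y,z))$ from uniform $1$-convexity (limiting $\epsilon'\to0^{+}$ when $d(x,y)>|a-b|$, triangle inequality in the degenerate case), whereas the paper's large-gap computation simply uses ``$1$-convexity'' without noting that it is not formally part of the definition of uniform $1$-convexity. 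Both proofs are sound; yours trades explicit constants for uniformity and self-containedness.
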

\begin{proof}
First note that $\mathcal{M}^{p}(a,b)\le\mathcal{M}^{p'}(a,b)$. 

Assume first that $1<p<p'\le\infty$. If $x,y,z\in X$ is a triple
satisfying the condition for $p'$ then it also satisfies the condition
for $p$ and thus for $m=m(x,y)$ 
\begin{eqnarray*}
d(m,z) & \le & (1-\rho_{p}(\epsilon))\mathcal{M}^{p}(d(x,z),d(y,z))\\
 & \le & (1-\rho_{p}(\epsilon))\mathcal{M}^{p'}(d(x,z),d(y,z)).
\end{eqnarray*}
Hence setting $\rho_{p'}(\epsilon):=\rho_{p}(\epsilon)$ gives the
result.

For $p=1$ we skip the case $p'=\infty$ as this was proven in \cite[Proposition 1]{Foertsch2004}:
Let $x,y,z\in X$ be some triple with $d(x,z)\ge d(y,z)$. If 
\[
d(x,y)>|d(x,z)-d(y,z)|+\frac{\epsilon}{2}\mathcal{M}^{1}(d(x,z),d(y,z))
\]
then 
\begin{eqnarray*}
d(m(x,y),z) & \le & \left(1-\rho_{1}\left(\frac{\epsilon}{2}\right)\right)\mathcal{M}^{1}(d(x,z),d(y,z))\\
 & \le & \left(1-\rho_{1}\left(\frac{\epsilon}{2}\right)\right)\mathcal{M}^{p'}(d(x,z),d(y,z)).
\end{eqnarray*}
So assume 
\[
d(x,y)\le d(x,z)-d(y,z)+\frac{\epsilon}{2}\mathcal{M}^{p'}(d(x,z),d(y,z)).
\]
If $d(x,z)-d(y,z)\le\frac{\epsilon}{2}\mathcal{M}^{p'}(d(x,z),d(y,z))$
then 
\[
d(x,y)\le\epsilon\mathcal{M}^{p'}(d(x,z),d(y,z)).
\]
Hence we can assume $d(x,z)-d(y,z)>\frac{\epsilon}{2}\mathcal{M}^{p'}(d(x,z),d(y,z))$.
Now for $p'\ge2$ Clarkson's inequality
\[
\left(\frac{1}{2}a+\frac{1}{2}b\right)^{p'}+c_{p'}(a-b)^{p'}\le\frac{1}{2}a^{p'}+\frac{1}{2}b^{p'}
\]
holds. Thus using $1$-convexity and our assumption we get
\begin{eqnarray*}
d(m,z)^{p'} & \le & \left(\frac{1}{2}d(x,z)+\frac{1}{2}d(y,z)\right)^{p'}\\
 & \le & \frac{1}{2}d(x,z)^{p'}+\frac{1}{2}d(y,z)^{p'}-c_{p'}(d(x,z)-d(y,z))^{p'}\\
 & \le & \left(1-c_{p}\left(\frac{\epsilon}{2}\right)^{p'}\right)\mathcal{M}^{p'}(d(x,z),d(y,z))^{p'}.
\end{eqnarray*}
Choosing $\rho_{p'}(\epsilon)=\min\{\rho_{1}(\frac{\epsilon}{2}),1-(1-c_{p}(\frac{\epsilon}{2})^{p'})^{\frac{1}{p'}}\}$
gives the result.

For $1<p'<2$ we use the other Clarkson inequality
\[
\left(\frac{1}{2}a+\frac{1}{2}b\right)^{q}+c_{p'}(a-b)^{q}\le\left(\frac{1}{2}a^{p}+\frac{1}{2}b^{p}\right)^{q}
\]
where $\frac{1}{q}+\frac{1}{p'}=1$. By similar arguments we get 
\[
d(m,z)^{q}+c_{p'}\left(\frac{\epsilon}{2}\right)^{\frac{q}{p'}}(\mathcal{M}^{p'}(d(x,z),d(y,z))^{q}\le(\mathcal{M}^{p'}(d(x,z),d(y,z))^{q}.
\]
Choosing in this case 
\[
\rho_{p'}(\epsilon)=\min\left\{ \rho_{1}\left(\frac{\epsilon}{2}\right),1-\left(1-c_{p'}\left(\frac{\epsilon}{2}\right)^{\frac{q}{p'}}\right)^{\frac{1}{q}}\right\} 
\]
finishes the proof.\end{proof}
\begin{cor}
\label{cor:Busemann-conv}Assume $(X,d)$ is Busemann. Then $(X,d)$
is (strictly/uniformly) $p$-convex for some $p\in[1,\infty]$ iff
it is (strictly/uniformly) $p$-convex for all $p\in[1,\infty]$.\end{cor}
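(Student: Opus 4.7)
The plan is to leverage the preceding lemma, which already gives (strict/uniform) $p$-convexity $\Rightarrow$ (strict/uniform) $p'$-convexity whenever $p'\ge p$. It therefore suffices to close the loop by showing that on a Busemann space, (strict/uniform) $\infty$-convexity implies (strict/uniform) $1$-convexity; then all versions for $p\in[1,\infty]$ coincide.

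For the reduction, fix a triple $x,y,z\in X$ and assume without loss of generality $a:=d(x,z)\ge d(y,z)=:b$. Let $x'$ be the point on a geodesic from $z$ to $x$ with $d(z,x')=b$, so $d(x,x')=a-b$ and $d(x',z)=d(y,z)=b$. The Busemann quadruple condition applied to the pairs $(x,x')$ and $(y,y)$ yields
\[
d(m(x,y),m(x',y))\le\tfrac{1}{2}\bigl(d(x,x')+d(y,y)\bigr)=\tfrac{a-b}{2}.
\]
Since in the triple $(x',y,z)$ both $d(x',z)$ and $d(y,z)$ equal $b$, $\infty$-convexity gives $d(m(x',y),z)\le b$. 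The triangle inequality now produces $d(m(x,y),z)\le\tfrac{a-b}{2}+b=\mathcal{M}^1(a,b)$, which is $1$-convexity.

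For the strict case, the hypothesis $d(x,y)>|a-b|=a-b$ combined with $d(x,x')=a-b$ forces $d(x',y)>0$, so strict $\infty$-convexity sharpens the inner bound to $d(m(x',y),z)<b$ and the previous triangle inequality then yields $d(m(x,y),z)<\mathcal{M}^1(a,b)$. For the uniform case, suppose $d(x,y)>(a-b)+\epsilon\mathcal{M}^1(a,b)$. Then $d(x',y)\ge d(x,y)-(a-b)>\epsilon(a+b)/2\ge\epsilon b$, so uniform $\infty$-convexity applied to $(x',y,z)$ gives $d(m(x',y),z)\le(1-\rho_\infty(\epsilon))b$, whence
\[
d(m(x,y),z)\le\tfrac{a-b}{2}+(1-\rho_\infty(\epsilon))b=\mathcal{M}^1(a,b)-\rho_\infty(\epsilon)\,b.
\]
Using $d(x,y)\le a+b$ one deduces $4b\ge\epsilon(a+b)$, hence $2b/(a+b)\ge\epsilon/2$, and one may take $\rho_1(\epsilon)=\epsilon\rho_\infty(\epsilon)/2$.

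The main obstacle I anticipate is the uniform case bookkeeping: the inner improvement $\rho_\infty(\epsilon)$ is measured against $b$, while the output must be measured against $\mathcal{M}^1(a,b)=(a+b)/2$. A priori the ratio $2b/(a+b)$ could be arbitrarily small; the key observation is that the very hypothesis $d(x,y)>(a-b)+\epsilon\mathcal{M}^1(a,b)$, compared against the upper triangle bound $d(x,y)\le a+b$, forces this ratio to stay above $\epsilon/2$, and it is this quantitative step that must be extracted carefully.
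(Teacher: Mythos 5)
Your proposal is correct, and its skeleton is exactly the paper's: the preceding lemma handles $p\Rightarrow p'$ for $p'\ge p$, so everything reduces to showing that on a Busemann space (strict/uniform) $\infty$-convexity implies (strict/uniform) $1$-convexity. The difference is that the paper disposes of this reduction in one line by citing \cite[Proposition 1]{Foertsch2004}, whereas you actually prove it: you introduce the auxiliary point $x'$ on the geodesic $[z,x]$ at distance $b=d(y,z)$ from $z$, apply the Busemann quadruple inequality to $(x,y)$ and $(x',y)$ to get $d(m(x,y),m(x',y))\le\frac{a-b}{2}$, use $\infty$-convexity on the symmetric triple $(x',y,z)$, and conclude by the triangle inequality. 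I checked the quantitative bookkeeping in the uniform case and it is sound: the hypothesis $d(x,y)>(a-b)+\epsilon\mathcal{M}^1(a,b)$ together with $d(x,y)\le a+b$ does force $2b/(a+b)>\epsilon/2$, so $\rho_1(\epsilon)=\epsilon\rho_\infty(\epsilon)/2$ works (and the condition is vacuous for $\epsilon\ge2$, so this stays in $(0,1)$ where it matters); note also that the hypothesis rules out $b=0$, so the application of uniform $\infty$-convexity to $(x',y,z)$ is legitimate. What your route buys is a self-contained argument in the paper's own notation, which moreover makes the dependence of $\rho_1$ on $\rho_\infty$ explicit; what the paper's route buys is brevity and a pointer to the original source. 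One small point worth making explicit if you write this up: the lemma as stated in the paper covers only the uniform case, so for the plain and strict versions of the monotonicity $p\Rightarrow p'$ you should add the one-line observation that $\mathcal{M}^p\le\mathcal{M}^{p'}$ (with strict inequality when $a\ne b$, which handles the mismatch between the strictness hypotheses at $p=1$ and $p>1$).
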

\begin{proof}
This is just a using \cite[Proposition 1]{Foertsch2004} who proved
that (strict/uniform) $\infty$-convexity implies (strict/uniform)
$1$-convexity. 
\end{proof}
Any $CAT(0)$-space is both Busemann and uniformly $2$-convex, hence
uniformly $p$-convex for every $p\in[1,\infty]$.

\section*{Convex subsets and reflexivity}

In a geodesic metric space, we say that subset $C\subset X$ is convex
if for each $x,y\in C$ and each geodesic $\gamma$ connecting $x$
and $y$ also $\gamma\subset C$. Given any subset $A\subset X$ we
define the convex hull of $A$ as follows: $G_{0}=A$ then for $n\ge1$
\[
G_{n}=\bigcup_{x,y\in G_{n-1}}\{\gamma_{t}\,|\,\gamma\mbox{ is a geodesic connecting \ensuremath{x\:}and \ensuremath{y\:}and \ensuremath{t\in[0,1]\}}}
\]
\[
\conv A=\bigcup_{n\in\mathbb{N}}G_{n}.
\]
 The closed convex hull is just the closure $\overline{\conv A}$
of $\conv A$.

The projection map onto (convex) sets can be defined as follows: Given
a non-empty subset $C$ of $X$ define $r_{C}:X\to[0,\infty)$ by
\[
r_{C}(x)=\inf_{c\in C}d(x,c)
\]
and $P_{C}:X\to2^{C}$ by 
\[
P_{C}(x)=\{c\in C\,|\, r_{C}(x)=d(x,c)\}.
\]
In case $|P_{C}(x)|=1$ for all $x\in X$ we say that the set $C$
is Chebyshev. In that case, just assume $P_{C}$ is a map from $X$
to $C$. 

It is well-known that a Banach space is reflexive iff any decreasing
family of closed bounded convex subsets has non-empty intersection.
Thus it makes sense for general metric spaces to define reflexivity
as follows. 
\begin{defn}
[Reflexivity] A metric space $(X,d)$ is said to be reflexive if
for every decreasing family $(C_{i})_{i\in I}$ of non-empty bounded
closed convex subsets, i.e. $C_{i}\subset C_{j}$ whenever $i>j$
where $I$ is a directed set then it holds 
\[
\bigcap_{i\in I}C_{i}\ne\varnothing.
\]

\end{defn}
It is obvious that any proper metric space is reflexive. The following
was defined in \cite{Huff1980}. We will simplify Huff's proof of
\cite[Theorem 1]{Huff1980} to show that nearly uniform convexity
implies reflexivity using a proof via the projection map, see e.g.
\cite[Proofs of 2.1.12(i) and 2.1.16]{Bacak2014a}. However, since
the weak topology (see below) is not necessarily Hausdorff, we cannot
show that nearly uniform convexity also implies the uniform Kadec-Klee
property.

We say that a family of points $(x_{i})_{i\in I}$ is $\epsilon$-separated
if $d(x_{i},x_{j})\ge\epsilon$ for $i\ne j$, i.e. 
\[
\sep((x_{i})_{i\in I})=\inf d(x_{i},x_{j})\ge\epsilon.
\]

\begin{defn}
[Nearly uniformly convex] A $\infty$-convex metric space $(X,d)$
is said to be nearly uniformly convex, if for any $R>0$ for any $\epsilon$-separated
infinite family $(x_{i})_{i\in I}$ with $d(x_{i},y)\le r\le R$ there
is a $\rho=\rho(\epsilon,R)>0$ such that 
\[
B_{(1-\rho)r}(y)\cap\overline{\conv(x_{i})_{i\in I}}\ne\varnothing.
\]

\end{defn}
Note that uniform $\infty$-convexity implies nearly uniform convexity,
an even stronger statement is formulated in Theorem \ref{thm:NUC}.
However, not every nearly uniformly convex space is uniformly convex,
see \cite{Huff1980}.
\begin{thm}
For every closed convex subset $C$ of a nearly uniformly convex metric
space the projection $P_{C}$ has non-empty compact images, i.e. $P_{c}(x)$
is non-empty and compact for every $x\in X$.\end{thm}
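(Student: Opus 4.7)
Fix $x\in X$ and set $r=r_{C}(x)=\inf_{c\in C}d(x,c)$. Observe first that $P_{C}(x)$ is automatically closed, since it is the intersection of the closed set $C$ with the level set $\{c : d(x,c)=r\}$ of the continuous function $d(x,\cdot)$. The whole proof will run through the following dichotomy applied to sequences in $C$: either a sequence has a Cauchy subsequence (hence converges in the closed set $C$), or after extraction it is $\epsilon$-separated for some $\epsilon>0$, in which case the near uniform convexity hypothesis will yield a contradiction with the defining infimum of $r$.

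\textbf{Non-emptiness.} Pick a minimizing sequence $(c_{n})\subset C$ with $d(x,c_{n})\le r+\tfrac{1}{n}$. I would argue by contradiction: if no subsequence is Cauchy, then a standard extraction produces an infinite $\epsilon$-separated subfamily $(c_{n_{k}})$. For any given small $\delta>0$, throwing away finitely many terms I may assume $d(x,c_{n_{k}})\le r+\delta\le R:=r+1$ for all $k$. Applying near uniform convexity with center $y=x$, radius $r+\delta$, and separation $\epsilon$ yields $\rho=\rho(\epsilon,R)>0$ and a point
\[
p\in B_{(1-\rho)(r+\delta)}(x)\cap\overline{\conv(c_{n_{k}})_{k}}.
\]
Because $C$ is closed and convex, $\overline{\conv(c_{n_{k}})_{k}}\subset C$, so $p\in C$ with $d(x,p)<(1-\rho)(r+\delta)$. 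Choosing $\delta$ so small that $(1-\rho)(r+\delta)<r$ contradicts the definition of $r$. Hence some subsequence of $(c_{n})$ is Cauchy, its limit lies in $C$, and realizes the distance $r$, so $P_{C}(x)\ne\varnothing$.

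\textbf{Compactness.} Since $P_{C}(x)$ is closed, it suffices to show it is sequentially precompact. Let $(c_{n})\subset P_{C}(x)$, so $d(x,c_{n})=r$ for every $n$. If no subsequence were Cauchy, the same extraction produces an $\epsilon$-separated infinite subfamily with $d(x,c_{n_{k}})\le r$, and near uniform convexity applied with $y=x$ and $R=r$ produces a point $p\in\overline{\conv(c_{n_{k}})_{k}}\subset C$ with $d(x,p)\le(1-\rho)r<r$, again contradicting the definition of $r$. Therefore every sequence in $P_{C}(x)$ has a Cauchy subsequence, whose limit lies in the closed set $P_{C}(x)$, proving compactness.

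\textbf{Main obstacle.} The one place to be careful is the choice of the bounding radius $R$ in the near uniform convexity hypothesis: one must fix $R$ once, independently of $\delta$, so that $\rho=\rho(\epsilon,R)$ does not degenerate as $\delta\to 0$ in the non-emptiness step. Taking $R=r+1$ (or any fixed constant $>r$) and only then shrinking $\delta$ resolves this cleanly. The inclusion $\overline{\conv(c_{n_{k}})}\subset C$, which is what transfers the conclusion of near uniform convexity back into $C$, is immediate from closedness and the definition of convex subset via geodesics.
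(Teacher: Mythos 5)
Your proof is correct and follows essentially the same route as the paper's: take a minimizing (or distance-realizing) sequence, observe that failure of a Cauchy subsequence yields an infinite $\epsilon$-separated subfamily, and then apply near uniform convexity to produce a point of $\overline{\conv(c_{n_k})}\subset C$ strictly closer to $x$ than $r$, contradicting the definition of $r_C(x)$. Your explicit handling of the radius $R$ versus the shrinking $\delta$ is a slightly more careful bookkeeping of the same step the paper performs with $B_{(1-\rho)(r+\frac{1}{n})}(x)\subset B_{(1-\rho')r}(x)$.
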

\begin{cor}
If $(X,d)$ is nearly uniformly convex and strictly $\infty$-convex
then every closed convex set is Chebyshev.\end{cor}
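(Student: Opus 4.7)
The plan is to combine the existence part (the immediately preceding theorem) with a direct uniqueness argument coming from strict $\infty$-convexity. By the theorem, $P_C(x)$ is already known to be non-empty (and even compact) for every $x\in X$, so the only thing to verify is that $|P_C(x)|=1$.

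For uniqueness I would argue by contradiction. Suppose $c_1,c_2\in P_C(x)$ are distinct, and set $r:=r_C(x)=d(x,c_1)=d(x,c_2)$. Since $X$ admits midpoints (this is built into the definition of $\infty$-convexity used throughout the paper), there is a midpoint $m=m(c_1,c_2)$. The set $C$ is closed and convex, and any geodesic between two points of $C$ lies in $C$, so $m\in C$. Applying strict $\infty$-convexity at the triple $(c_1,c_2,x)$ with $c_1\ne c_2$ gives
\[
d(m,x)<\mathcal{M}^{\infty}(d(c_1,x),d(c_2,x))=\max\{d(c_1,x),d(c_2,x)\}=r,
\]
which contradicts $d(m,x)\ge r_C(x)=r$ since $m\in C$. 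Hence $c_1=c_2$ and $P_C(x)$ is a singleton, i.e.\ $C$ is Chebyshev.

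There is really no serious obstacle here; the only point to keep honest is that the notion of midpoint is available (which it is, since $\infty$-convex spaces admit midpoints by definition) and that $m\in C$ uses only convexity of $C$, not any further regularity. The strictness in strict $\infty$-convexity is exactly what rules out two equidistant nearest points, so no uniform convexity or compactness input beyond the previous theorem is needed.
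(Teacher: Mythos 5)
Your proof is correct and is exactly the argument the paper intends: the corollary is stated without an explicit proof, and the standard route is precisely your combination of the preceding theorem (non-emptiness of $P_C(x)$) with strict $\infty$-convexity applied to a midpoint $m(c_1,c_2)\in C$ of two putative nearest points to force $d(m,x)<r_C(x)$, a contradiction. Your side remarks (that midpoints exist and that $m\in C$ follows from convexity of $C$ alone) are the right points to check and are handled correctly.
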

\begin{proof}
[Proof of the Theorem]Let $C$ be a closed convex subset, $x\in X$
be arbitrary and set $r=r_{C}(x)$. For each $n\in N$ there is an
$x_{n}\in C$ such that $r\le d(x,x_{n})\le r+\frac{1}{n}$. In particular,
$d(x,x_{n})\to r$ as $n\to\infty$ . If $r=0$ or every subsequence
of $(x_{n})$ admits a convergent subsequence we are done. 

So assume $(x_{n})$ w.l.o.g. that $(x_{n})$ is $\epsilon$-separated
for some $\epsilon>0$. By nearly uniform convexity there is a $\rho=\rho(\epsilon)>0$
such that
\[
A_{n}=B_{(1-\rho)(r+\frac{1}{n})}(x)\cap\overline{\conv(x_{m})_{m\ge n}}\ne\varnothing.
\]
For sufficiently large $n$ and some $0<\rho'<\rho$ we also have
$B_{(1-\rho)(r+\frac{1}{n})}(x)\subset B_{(1-\rho')r}(x)$, i.e. $d(x,y)<r$
for some $y\in A_{n}$. But this contradicts the fact that $\overline{\conv(x_{m})_{m\ge n}}\subset C$,
i.e. $d(x,y)\ge r$ for all $y\in A_{n}$. \end{proof}
\begin{thm}
A nearly uniformly convex metric space is reflexive.\end{thm}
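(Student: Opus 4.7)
The plan is to use the projection theorem just proved to realize a point of $\bigcap_{i \in I} C_i$ as a limit of approximate projections from a fixed base point. Fix any $x_0 \in X$. The previous theorem provides for each $i \in I$ a point $y_i \in P_{C_i}(x_0)$; set $r_i := d(x_0, y_i) = r_{C_i}(x_0)$. The net $(r_i)$ is monotone non-decreasing since $(C_i)$ decreases, and is bounded above since every $C_i$ with $i \ge i_0$ lies in the bounded set $C_{i_0}$. Hence $r_i \nearrow r$ for some $r \in [0,\infty)$. If $r = 0$, then $x_0 \in \overline{C_i} = C_i$ for every $i$ and the intersection contains $x_0$.

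Assume now $r > 0$. The key general observation is that any cluster point $y^{*}$ of the net $(y_i)$ automatically lies in $\bigcap_i C_i$, because for every fixed $j$ the points $y_i$ with $i \ge j$ all sit in the closed set $C_j$. It therefore suffices to produce such a cluster point.

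Suppose, for contradiction, that $(y_i)$ has no cluster point. For every $j$ the tail $\{y_i : i \ge j\}$ is then bounded and not relatively compact, hence not totally bounded in the complete metric space $X$. Combining this with directedness of $I$ and with $r_j \nearrow r$, one extracts an infinite subsequence $(y_{i_n})_{n \ge 1}$ such that (i) $i_1 \le i_2 \le \cdots$ in $I$, (ii) $\{y_{i_n}\}$ is $\epsilon$-separated for some $\epsilon > 0$, and (iii) $r_{i_n} \to r$. All points $y_{i_n}$ lie in $B_r(x_0)$, so nearly uniform convexity provides $\rho = \rho(\epsilon, r) > 0$ such that for every $N$
\[
B_{(1-\rho)r}(x_0) \cap \overline{\conv \{y_{i_n} : n \ge N\}} \neq \varnothing.
\]
Since $y_{i_n} \in C_{i_n} \subset C_{i_N}$ for $n \ge N$ and $C_{i_N}$ is closed and convex, the closed convex hull above lies in $C_{i_N}$, forcing $r_{i_N} \le (1-\rho)r$. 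For $N$ large enough that $r_{i_N} > (1-\rho)r$ this is a contradiction, so $(y_i)$ must admit a cluster point and we are done.

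The hardest step is the combined extraction producing $(y_{i_n})$: a single sequence of indices that is simultaneously monotone in the directed set $I$, whose image is uniformly $\epsilon$-separated, and whose associated distances $r_{i_n}$ exhaust the supremum $r$. The absence of any cluster point of the net forces every tail $\{y_i : i \ge j\}$ to fail total boundedness, so a greedy construction that at each step advances the index in $I$, picks a new point $\epsilon$-far from the previously chosen finitely many, and pushes $r_{i_n}$ closer to $r$, is the technical heart of the argument.
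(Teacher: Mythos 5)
Your argument is essentially the paper's own proof: project $x_0$ onto each $C_i$ via the preceding theorem to obtain $y_i$ with $d(x_0,y_i)=r_i\nearrow r$, dispose of the case where the net clusters, and otherwise apply nearly uniform convexity to an $\epsilon$-separated subfamily to force a point of some $C_i$ into $B_{(1-\rho)r}(x_0)$, contradicting $r_i\to r$. The extraction step you single out as the technical heart is treated no more carefully in the paper itself, which simply asserts that a net with no convergent subnet admits an $\epsilon$-separated subnet.
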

\begin{proof}
Let $(C_{i})_{i\in I}$ be a non-increasing family of bounded closed
convex subsets of $X$ and let $x\in X$ be some arbitrary point.
For each $i\in I$ define $r_{i}=\inf_{y\in C_{i}}d(x,y)$. Since
$(C_{i})_{i\in I}$ is non-increasing so the net $(r_{i})_{i\in I}$
is non-decreasing and bounded, hence convergent to some $r$. By the
previous theorem there are $x_{i}\in C_{i}$ such that $d(x,x_{i})=r_{i}$.
If $r=0$ or $(x_{i})_{i\in I}$ admits a convergent subnet we are
done.

So assume there is an $\epsilon$-separated subnet $(x_{i'})_{i'\in I'}$
for some $\epsilon>0$. Now nearly uniform convexity implies that
for some $\rho=\rho(\epsilon)>0$ 
\[
\varnothing\ne A_{i}=B_{(1-\rho)r}(x)\cap\overline{\conv(x_{j})_{j\ge i}}\subset C_{i}.
\]
Since the subnet $(r_{i'})$ is also convergent to $r$ there is some
$i$ and $0<\rho'<\rho$ such that
\[
B_{(1-\rho)r}(x)\subset B_{(1-\tilde{\rho})r_{i}}(x).
\]

However, this implies that $d(x,y_{i})<r_{i}$ for all $y_{i}\in A_{i}$
contradicting the definition of $r_{i}$.
\end{proof}
In order to use reflexivity to characterize the weak topology defined
below better we need the following equivalent description. We say
that a collection of sets $(C_{i})_{i\in I}$ has the finite intersection
property if any finite subcollection has non-empty intersection, i.e.
for every finite $I'\subset I$, $\cap_{i\in I'}C_{i}\ne\varnothing$.
\begin{lem}
\label{cor:reflex-equiv}The space $(X,d)$ is reflexive iff every
collection $(C_{i})_{i\in I}$ of closed bounded convex subsets with
finite intersection property satisfies 
\[
\bigcap_{i\in I}C_{i}\ne\varnothing.
\]
\end{lem}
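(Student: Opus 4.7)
The plan is to prove the two implications directly from the definitions; both directions are essentially formal manipulations of directed sets, with no real geometric content beyond the definition of reflexivity.

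For the easy direction ($\Leftarrow$), I would observe that a decreasing family $(C_i)_{i\in I}$ indexed by a directed set automatically has the finite intersection property: given any finite $I' \subset I$, since $I$ is directed, pick an upper bound $i_0 \in I$ for $I'$; then $C_{i_0} \subset C_i$ for every $i \in I'$ (because $(C_i)$ is decreasing in the ordering $i > j \Rightarrow C_i \subset C_j$), so $C_{i_0} \subset \bigcap_{i \in I'} C_i$, and in particular the latter is non-empty. Then the hypothesis applies and gives $\bigcap_{i \in I} C_i \neq \varnothing$.

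For the harder direction ($\Rightarrow$), the natural move is to pass from an arbitrary family with the finite intersection property to a decreasing family by taking finite intersections. Let $\mathcal{F}$ be the collection of finite non-empty subsets of $I$, ordered by inclusion; this is a directed set since $F_1, F_2 \subset F_1 \cup F_2$. For each $F \in \mathcal{F}$ set
\[
D_F := \bigcap_{i \in F} C_i.
\]
Then $D_F$ is non-empty by the finite intersection property, closed and convex as an intersection of closed convex sets, and bounded since it is contained in any one of the $C_i$ for $i \in F$. Moreover $F \subset F' \Rightarrow D_{F'} \subset D_F$, so $(D_F)_{F \in \mathcal{F}}$ is a decreasing family of non-empty bounded closed convex sets. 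Reflexivity yields $\bigcap_{F \in \mathcal{F}} D_F \neq \varnothing$, and since every $i \in I$ appears as an element of some $F$ (take $F = \{i\}$), this intersection equals $\bigcap_{i \in I} C_i$.

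There is no real obstacle here; the only thing to double-check is that the convention ``$C_i \subset C_j$ whenever $i > j$'' from the definition of reflexivity matches what the finite-intersection argument produces, which it does once one uses the directedness of the index set as above. The argument is purely order-theoretic and does not use $p$-convexity, near uniform convexity, or any metric structure beyond the fact that closed and convex subsets are closed under arbitrary intersections.
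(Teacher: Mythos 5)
Your proof is correct and follows essentially the same route as the paper: the forward direction passes to the decreasing family of finite intersections indexed by the finite subsets of $I$ ordered by inclusion, and the reverse direction is the observation that a decreasing family over a directed set has the finite intersection property. You merely spell out the details (directedness giving an upper bound for a finite subfamily, and the boundedness of the sets $D_F$) that the paper leaves implicit.
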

\begin{proof}
The if-direction is obvious. So assume $(X,d)$ is reflexive and $(C_{i})_{i\in I}$
be a collection of closed bounded convex subsets with finite intersection
property.

Let $\mathcal{I}$ be the set of finite subsets of $I$. This set
directed by inclusion and the sets 
\[
\tilde{C}_{\mathbf{i}}=\bigcap_{i\in\mathbf{i}}C_{i}
\]
are non-empty closed and convex. Furthermore, the family $(\tilde{C}_{\mathbf{i}})_{\mathbf{i}\in\mathcal{I}}$
is decreasing. By reflexivity
\[
\bigcap_{i\in I}C_{i}=\bigcap_{\mathbf{i}\in\mathcal{I}}\tilde{C}_{\mathbf{i}}\ne\varnothing.
\]
 \end{proof}
\begin{thm}
Assume $(X,d)$ is strictly $\infty$-convex and nearly uniformly
convex. Then the midpoint map $m$ is continuous.\end{thm}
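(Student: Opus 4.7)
The plan is to fix sequences $x_n \to x$ and $y_n \to y$ and show that $m_n := m(x_n, y_n)$ converges to $m_0 := m(x, y)$. Midpoints are well-defined here because strict $\infty$-convexity makes the space uniquely geodesic. Writing $r := \frac{1}{2} d(x, y)$, from $d(m_n, x_n) = d(m_n, y_n) = \frac{1}{2} d(x_n, y_n)$ together with the triangle inequality I would first record the basic bounds $d(m_n, x) \to r$ and $d(m_n, y) \to r$.

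Suppose for contradiction $m_n \not\to m_0$. After passing to a subsequence, $d(m_n, m_0) \ge \delta > 0$. I would then split into two cases via the standard dichotomy for a bounded sequence in a complete metric space: either $(m_n)$ has a convergent subsequence, or its range fails to be totally bounded and one can inductively extract an $\epsilon$-separated subsequence. In the convergent case $m_{n_k} \to m'$, the limit bounds give $d(m', x), d(m', y) \le r$, so the triangle inequality forces $d(x, m') + d(m', y) = d(x, y)$, which makes $m'$ a midpoint of $x$ and $y$. By strict $\infty$-convexity midpoints are unique, so $m' = m_0$, contradicting $d(m_n, m_0) \ge \delta$.

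For the $\epsilon$-separated subsequence $(m_{n_k})$, fix $\eta > 0$ to be chosen later. For $K$ large enough, $d(m_{n_k}, x), d(m_{n_k}, y) \le r + \eta$ for all $k \ge K$. Since balls are convex in $\infty$-convex spaces, every point of $\overline{\conv(m_{n_k})_{k \ge K}}$ lies within $r + \eta$ of both $x$ and $y$. Applying nearly uniform convexity at the center $x$, with separation $\epsilon$ and a fixed bound $R \ge r + 1$, yields $\rho = \rho(\epsilon, R) > 0$ and a point
\[
z \in B_{(1 - \rho)(r + \eta)}(x) \cap \overline{\conv(m_{n_k})_{k \ge K}}.
\]
By the remark just made $d(y, z) \le r + \eta$, so
\[
d(x, y) \le d(x, z) + d(z, y) \le (2 - \rho)(r + \eta).
\]
Since $\rho$ is independent of $\eta$, one can choose $\eta$ so small that $(2 - \rho)(r + \eta) < 2r = d(x, y)$, producing the desired contradiction.

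The main obstacle is conceptual rather than technical: the point $z$ produced by nearly uniform convexity at $x$ must simultaneously be close to $y$, and this is ensured for free by $\infty$-convexity, which traps the closed convex hull of the $m_{n_k}$ inside the $y$-ball as well. Strict $\infty$-convexity is used only to rule out a ``ghost midpoint'' limit in the convergent case.
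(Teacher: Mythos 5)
Your proof is correct and follows essentially the same route as the paper: both trap the midpoints $m(x_n,y_n)$ in the shrinking convex sets $B_{r+\eta}(x)\cap B_{r+\eta}(y)$, use strict $\infty$-convexity to identify any strong limit point with the unique midpoint $m(x,y)$, and use nearly uniform convexity on an $\epsilon$-separated subsequence to produce a point of the closed convex hull at distance at most $(1-\rho)(r+\eta)$ from $x$ and at most $r+\eta$ from $y$, contradicting $d(x,y)=2r$ via the triangle inequality. The only cosmetic difference is that the paper packages the separated-case argument as the statement $\diam\bigl(B_{r+\eta}(x)\cap B_{r+\eta}(y)\bigr)\to 0$, which it then reuses for the contractibility corollary.
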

\begin{proof}
Since $(X,d)$ is strictly $\infty$-convex we see that geodesics
are unique. Thus the midpoint map by $m:(x,y)\mapsto m(x,y)$ is well-defined.
Now if $(x_{n},y_{n})\to(x,y)$ then for all $\epsilon>0$ the sequence
$m_{n}=m(x_{n},y_{n})$ eventually enters the closed convex and bounded
set 
\[
A_{\epsilon}=B_{\frac{1}{2}d(x,y)+\epsilon}(x)\cap B_{\frac{1}{2}d(x,y)+\epsilon}(y).
\]
By uniform $\infty$-convexity $\bigcap_{\epsilon>0}A_{\epsilon}$
is non-empty and contains only the point $m(x,y)$. 

We only need to show that $\diam A_{\epsilon}\to0$ as $\epsilon\to0$.
Now assume there is a sequence $x_{n}\in A_{\frac{1}{n}}$ that is
not Cauchy, so assume it is $\delta$-separated for some $\delta>0$.
Then by nearly uniform convexity there is a $\rho(\delta)>0$ 
\[
B_{(1-\rho)\left(\frac{1}{2}d(x,y)+\epsilon\right)}(x)\cap\overline{\conv(x_{n})}\ne\varnothing.
\]
And thus 
\[
\bigcap_{m}\overline{\conv(x_{n})_{n\ge m}}\cap B_{(1-\rho(\delta))\frac{1}{2}d(x,y)}(x)\ne\varnothing.
\]
But this contradicts the fact that 
\[
\bigcap_{m}\overline{\conv(x_{n})_{n\ge m}}\subset\bigcap_{m}A_{\frac{1}{m}}\subset B_{\frac{1}{2}d(x,y)}(y)
\]
is disjoint from $B_{(1-\rho)\left(\frac{1}{2}d(x,y)+\epsilon\right)}(x)$.\end{proof}
\begin{cor}
A strictly $\infty$-convex, nearly uniformly convex metric space
is contractible.\end{cor}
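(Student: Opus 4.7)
The plan is to use the continuity of the midpoint map (established in the preceding theorem) to produce an explicit contraction of $X$ to a basepoint via geodesics emanating from that basepoint.

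Fix any $x_{0}\in X$. Strict $\infty$-convexity guarantees that $X$ is uniquely geodesic, so for every $x\in X$ there is a unique constant-speed geodesic $\gamma_{x}\colon[0,1]\to X$ with $\gamma_{x}(0)=x_{0}$ and $\gamma_{x}(1)=x$. Define
\[
H\colon X\times[0,1]\to X,\qquad H(x,t)=\gamma_{x}(t).
\]
Then $H(x,0)=x_{0}$ and $H(x,1)=x$, and if $H$ is continuous, it is the desired contraction.

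The main task is therefore to establish continuity of $H$. First I would handle dyadic times. For $t=\tfrac{1}{2}$ we have $H(x,\tfrac{1}{2})=m(x_{0},x)$, which is continuous in $x$ by the previous theorem. Iterating, for $t=k/2^{n}$ the point $\gamma_{x}(t)$ is obtained by finitely many nested applications of $m$ with $x_{0}$ and $x$ as inputs, so $x\mapsto H(x,t)$ is continuous for every dyadic $t\in[0,1]$; in fact $(x,t)\mapsto H(x,t)$ is jointly continuous on $X\times D$, where $D\subset[0,1]$ is the set of dyadic rationals, since only finitely many midpoint compositions are involved for each fixed $t\in D$.

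To pass from dyadic times to arbitrary $t\in[0,1]$, I would use the fact that $\gamma_{x}$ is a constant-speed geodesic, so $d(\gamma_{x}(s),\gamma_{x}(t))=|s-t|\,d(x_{0},x)$. Given $(x_{n},t_{n})\to(x,t)$ and $\varepsilon>0$, choose a dyadic $q\in D$ with $|q-t|$ small. Then the triangle inequality gives
\[
d(H(x_{n},t_{n}),H(x,t))\le|t_{n}-q|\,d(x_{0},x_{n})+d(H(x_{n},q),H(x,q))+|q-t|\,d(x_{0},x).
\]
The sequence $d(x_{0},x_{n})$ is bounded, so the first and third terms can be made arbitrarily small by choice of $q$ (and then for $n$ large enough that $|t_{n}-q|$ is close to $|t-q|$), while the middle term tends to $0$ as $n\to\infty$ by the dyadic case already handled. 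Hence $H$ is continuous on $X\times[0,1]$, and $X$ contracts to $x_{0}$.

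The only delicate point is the joint continuity step at non-dyadic times; the argument above reduces this to the dyadic case plus the Lipschitz bound coming from the constant-speed parametrization. Uniqueness of geodesics (from strict $\infty$-convexity) is essential to make $H$ well-defined, and continuity of $m$ (from near uniform convexity, together with strict $\infty$-convexity) is essential for the dyadic step; everything else is routine.
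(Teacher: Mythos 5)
Your proof is correct and follows the same route as the paper: contract along the unique geodesics to a fixed basepoint $x_{0}$ (well-defined by strict $\infty$-convexity) and reduce continuity to that of the midpoint map from the preceding theorem. You are in fact more careful than the paper, which only asserts that each time-slice $\Phi_{t}$ is continuous, whereas your dyadic-approximation plus constant-speed estimate establishes the joint continuity of $(x,t)\mapsto H(x,t)$ that contractibility actually requires.
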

\begin{proof}
Take a fixed point $x_{0}\in X$ and define the map 
\[
\Phi_{t}(x)=\gamma_{xx_{0}}(t)
\]
where $\gamma_{xx_{0}}$ is the geodesic connecting $x$ and $x_{0}$.
Now proof of previous theorem also shows that $t$-midpoints are continuous,
in particular $\Phi_{t}$ is continuous.
\end{proof}

\section*{Weak topologies}

In Hilbert and Banach spaces the concept of weak topologies can be
introduced with the help of dual spaces. Since for general metric
spaces there is (by now) no concept of dual spaces, a direct definition
needs to be introduced. As it turns out the first topology agrees
with the usual weak topology, see Corollary \ref{cor:Banach-weak-co-convex}.

\subsection*{Co-convex topology}

The first weak topology on metric spaces is the following. It already
appeared in \cite{Monod2006}. As it turns out, this topology is agrees
with the weak topology on any Banach space, see Corollary \ref{cor:Banach-weak-co-convex}
below. 
\begin{defn}
[Co-convex topology] Let $(X,d)$ be a metric space. Then the co-convex
topology $\tau_{co}$ is the weakest topology containing all complements
of closed convex sets. 
\end{defn}
Obviously this topology is weaker than the topology induced by the
metric and since point sets are convex the topology satisfies the
$T_{1}$-separation axiom, i.e. for each two points $x,y\in X$ there
is an open neighborhood $U_{x}$ containing $x$ but not $y$. Furthermore,
the set of weak limit points of a sequence $(x_{n})_{n\in\mathbb{N}}$
is convex if the space is $\infty$-convex. A useful characterization
of the limit points is the following: 
\begin{lem}
\label{lem:convex-leaving}A sequence of points $x_{n}$ converges
weakly to $x$ iff for all subsequences $(x_{n'})$ it holds 
\[
x\in\overline{\conv(x_{n'})}.
\]
The set of limit point $\Lim(x_{n})$ is the non-empty subset
\[
\bigcap_{(i_{n})\subset I_{\inf}}\overline{\conv(x_{i_{n}})}
\]
where $I_{\inf}$ is the set of sequences of increasing natural numbers.\end{lem}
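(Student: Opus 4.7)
The plan is to exploit the subbase structure of $\tau_{co}$ and translate weak convergence into an ``eventually avoiding every closed convex set not containing the limit'' criterion. Since the subbase of $\tau_{co}$ is $\{X\setminus C : C \text{ closed convex}\}$ and sequential convergence in any topological space is equivalent to eventual membership in each subbasic open neighborhood of the limit (``eventually'' being closed under finite intersections), I first record the working criterion: $x_n \to x$ in $\tau_{co}$ if and only if, for every closed convex $C$ with $x \notin C$, $x_n \in X\setminus C$ eventually.

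\emph{Forward direction.} Assume $x_n \to x$ weakly and let $(x_{n'})$ be any subsequence. Set $C := \overline{\conv(x_{n'})}$, which is closed and convex and contains every $x_{n'}$. Were $x \notin C$, the criterion would force $x_n \in X\setminus C$ eventually, contradicting the fact that the infinite subsequence $(x_{n'})$ stays inside $C$. Hence $x \in \overline{\conv(x_{n'})}$.

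\emph{Reverse direction.} Conversely, suppose $x$ lies in $\overline{\conv(x_{n'})}$ for every subsequence. If $x_n$ did not converge weakly to $x$, the criterion would supply a closed convex $C \not\ni x$ such that $x_n \in C$ infinitely often; extracting those indices to a subsequence $(x_{n'})$ and using that $C$ is closed and convex gives $\overline{\conv(x_{n'})} \subset C$, so $x \in C$, contradicting $x \notin C$.

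\emph{Identification of $\Lim(x_n)$ and non-emptiness.} Combining both directions, $x \in \Lim(x_n)$ iff $x \in \overline{\conv(x_{i_n})}$ for every increasing subsequence $(i_n)$, giving the set-theoretic identity $\Lim(x_n) = \bigcap_{(i_n)\in I_{\inf}} \overline{\conv(x_{i_n})}$. The main obstacle is non-emptiness: the natural approach is to apply Lemma \ref{cor:reflex-equiv} to the family $\{\overline{\conv(x_{i_n})}\}$ of closed, bounded (assuming $(x_n)$ bounded), convex sets in the reflexive space $(X,d)$, reducing to the finite intersection property. Establishing this FIP is the delicate step; one would try to intersect finitely many such hulls by a diagonal extraction yielding a common sub-subsequence whose closed convex hull lies in each factor, and then invoke reflexivity on the resulting decreasing tail family. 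This FIP step is where I expect the most care to be needed.
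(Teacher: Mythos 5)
Your proof of the equivalence is correct and follows essentially the same route as the paper: both directions reduce to the observation that a sequence converges in $\tau_{co}$ iff it eventually leaves every closed convex set not containing the limit (valid because $\tau_{co}$ is generated by the subbase of complements of closed convex sets and ``eventually'' passes to finite intersections), applied in one direction to the weakly closed set $\overline{\conv(x_{n'})}$ and in the other to a subbasic neighbourhood $X\backslash C$ witnessing non-convergence. The identification $\Lim(x_{n})=\bigcap_{(i_{n})\subset I_{\inf}}\overline{\conv(x_{i_{n}})}$ then follows exactly as you say.

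Where you go astray is the non-emptiness. Read in context, that claim is made for a weakly convergent sequence, and there it is immediate from the forward direction you have already proved: the weak limit $x$ lies in $\overline{\conv(x_{i_{n}})}$ for every increasing $(i_{n})$, hence in the intersection. No reflexivity, no boundedness, and no finite intersection property are needed, and the paper accordingly does not argue this point at all. Worse, the route you sketch would actually fail: for a general bounded sequence the family $\{\overline{\conv(x_{i_{n}})}\}$ need not have the finite intersection property --- take $x_{n}=(-1)^{n}$ in $\mathbb{R}$, where the hulls of the even- and odd-indexed subsequences are the disjoint singletons $\{1\}$ and $\{-1\}$ (and indeed this sequence has no weak limit) --- and two subsequences need not admit a common sub-subsequence, so the proposed diagonal extraction has nothing to diagonalize over. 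Delete the FIP discussion and replace it with the one-line observation above; everything else stands.
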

\begin{rem*}
The same statement holds for also for nets. Below we will make most
statments only for sequences if in fact they also hold for nets.\end{rem*}
\begin{proof}
This follows immediately from the fact that 
\[
A(x_{n'})=\overline{\conv(x_{n'})}
\]
is closed, bounded and convex and thus weakly closed. 

First suppose $x\notin A(x_{n'})$ for some subsequence $(x_{n'})$.
By definition $x_{n}\overset{\tau_{co}}{\to}y$ implies that $x_{n}$
eventually leaves every closed bounded convex sets not containing
$y$. Since $(x_{m'})\subset A(x_{n'})$ for $m'\ge n'$, we conclude
$(x_{n'})$ cannot converge weakly to $x$.

Conversely, if $(x_{n})$ does not converge to $x$ then there is
a weakly open set $U\in\tau_{co}$ such that $(x_{n})\not\subset U$
and $x\in U$. In particular, for some subsequence $(x_{n'})$ it
holds $(x_{n'})\subset X\backslash U$. Since $\tau_{co}$ is generated
by complements of closed convex sets we can assume $U=X\backslash C$
for some closed convex subset $C$. Therefore, $(x_{n'})\subset C$
and thus $A(x_{n'})\subset C$, i.e. $x\notin A(x_{n'})$.\end{proof}
\begin{cor}
For any weakly convergent sequence $(x_{n})$ and countable subset
$A$ disjoint from $\Lim(x_{n})$ there is a subsequence $(x_{n'})$
such that 
\[
A\cap\bigcap_{m\in\mathbb{N}}\overline{\conv(x_{n'})_{n'\ge m}}=\varnothing.
\]
\end{cor}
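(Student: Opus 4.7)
The plan is to enumerate $A=\{a_1,a_2,\ldots\}$, iterate Lemma \ref{lem:convex-leaving} to obtain nested subsequences, and take the diagonal.

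Set $\mathcal{E}_0:=(x_n)$ and construct inductively nested subsequences $\mathcal{E}_0\supseteq\mathcal{E}_1\supseteq\mathcal{E}_2\supseteq\cdots$ with $a_k\notin\overline{\conv(\mathcal{E}_k)}$. At stage $k$, I would apply Lemma \ref{lem:convex-leaving} to the sequence $\mathcal{E}_{k-1}$: provided $a_k\notin\Lim(\mathcal{E}_{k-1})$, the characterization $\Lim(\mathcal{E}_{k-1})=\bigcap_{(i_n)}\overline{\conv((\mathcal{E}_{k-1})_{i_n})}$ furnishes a subsequence $\mathcal{E}_k$ of $\mathcal{E}_{k-1}$ with $a_k\notin\overline{\conv(\mathcal{E}_k)}$.

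Once the nested family is in hand, I would form the diagonal $(x_{n_j})_j$ by letting $x_{n_j}$ be the $j$-th entry of $\mathcal{E}_j$, chosen so that $n_1<n_2<\cdots$. For each fixed $k$, the tail $(x_{n_j})_{j\ge k}$ is a subsequence of $\mathcal{E}_k$, whence
\[
\overline{\conv\{x_{n_j}:j\ge k\}}\subseteq\overline{\conv(\mathcal{E}_k)},
\]
and the right-hand side omits $a_k$ by construction. Thus $a_k\notin\bigcap_m\overline{\conv\{x_{n_j}:j\ge m\}}$ for every $k$, yielding $A\cap\bigcap_m\overline{\conv(x_{n_j})_{j\ge m}}=\varnothing$.

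The principal obstacle is verifying the side condition $a_k\notin\Lim(\mathcal{E}_{k-1})$ at each inductive step. The hypothesis gives only $a_k\notin\Lim(x_n)$, and in the co-convex topology---which is only $T_1$ and may fail to be Hausdorff (cf.\ Example \ref{ex:cone-hilbert})---passing to a subsequence can strictly enlarge the set of weak limit points. Since Lemma \ref{lem:convex-leaving} ensures $\Lim(\mathcal{E}_{k-1})\subseteq\overline{\conv(\mathcal{E}_{k-1})}$, this can be circumvented by strengthening the inductive hypothesis so that $\mathcal{E}_{k-1}$ additionally avoids $a_k$ in its closed convex hull---a one-step lookahead which forces $a_k\notin\Lim(\mathcal{E}_{k-1})$ and allows the next application of the lemma to go through.
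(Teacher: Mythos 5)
Your construction is the same as the paper's: enumerate $A$, use Lemma \ref{lem:convex-leaving} to produce nested subsequences whose closed convex hulls successively omit the points of $A$, and diagonalize. The diagonal step and the monotonicity $\overline{\conv\{x_{n_j}:j\ge k\}}\subseteq\overline{\conv(\mathcal{E}_k)}$ are fine, and you are right to single out the inductive step as the delicate point: to extract from $\mathcal{E}_{k-1}$ a \emph{further} subsequence whose closed convex hull omits $a_k$, the lemma must be applied to $\mathcal{E}_{k-1}$ itself, which requires $a_k\notin\Lim(\mathcal{E}_{k-1})$. Since $\Lim$ is the intersection of $\overline{\conv(\cdot)}$ over \emph{all} subsequences, passing to a subsequence shrinks the family being intersected and can only enlarge $\Lim$; in a merely $T_1$ topology this enlargement can be strict, so $a_k\notin\Lim(x_n)$ does not by itself give what is needed. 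The paper's proof performs exactly this induction without comment, so the subtlety you flag is present there as well.

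The difficulty is that your proposed repair does not close the gap. If the invariant carried at stage $k-1$ is ``$a_k\notin\overline{\conv(\mathcal{E}_{k-1})}$'', then producing $\mathcal{E}_k$ avoiding $a_k$ is indeed immediate; but you must then re-establish the invariant for the next stage, namely $a_{k+1}\notin\overline{\conv(\mathcal{E}_k)}$. The only available tool is again Lemma \ref{lem:convex-leaving} applied to $\mathcal{E}_{k-1}$ with target $a_{k+1}$, and that requires $a_{k+1}\notin\Lim(\mathcal{E}_{k-1})$ --- precisely the unproved condition, shifted by one index. The one-step lookahead therefore regresses indefinitely instead of terminating, and no finite lookahead fares better. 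To make the argument rigorous one must either prove that $\Lim(\mathcal{E})=\Lim(x_n)$ for every subsequence $\mathcal{E}$ of a weakly convergent sequence (which would also legitimize the paper's one-line induction), or produce for each $a_k$ a single closed convex set $C_k\not\ni a_k$ containing infinitely many terms of the \emph{already chosen} subsequence $\mathcal{E}_{k-1}$; neither is supplied. As written, your proof has the same unfilled step as the paper's, and the patch you add is circular.
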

\begin{proof}
First note, by the lemma above there is a subsequence $(x_{m_{n}^{(0)}})$
of $(x_{n})$ such that $y_{0}\in A$ is not contained in $\overline{\conv(x_{m_{n}^{(0)}})}$.
Now inductively constructing $(x_{m_{n}^{(k)}})$ avoiding $y_{k}$
using the sequence $(x_{m_{n}^{(k-1)}})$ we can choose the diagonal
sequence $m_{n}=m_{n}^{(n)}$ such that 
\[
y\notin\bigcap_{m\in\mathbb{N}}\overline{\conv(x_{m_{n}})_{m_{n}\ge m}}
\]
for all $y\in A$.\end{proof}
\begin{cor}
\label{cor:Banach-weak-co-convex}On any Banach space $X$ the co-convex
topology $\tau_{co}$ agrees the weak topology $\tau_{w}$. In particular,
$\tau_{co}$ is Hausdorff.\end{cor}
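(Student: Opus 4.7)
The plan is to prove both inclusions $\tau_{co}\subseteq\tau_{w}$ and $\tau_{w}\subseteq\tau_{co}$ by examining sub-bases, and then conclude the Hausdorff property from the fact that the weak topology of a Banach space is already known to be Hausdorff (via Hahn--Banach).

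For $\tau_{co}\subseteq\tau_{w}$, I would start from the sub-basis of $\tau_{co}$ consisting of complements $X\setminus C$ where $C$ is norm-closed and convex. By Mazur's theorem (a standard consequence of Hahn--Banach separation), in a Banach space every norm-closed convex set is weakly closed. Hence $X\setminus C$ is weakly open, and the inclusion follows at the level of sub-bases, and therefore at the level of the generated topologies.

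For the reverse inclusion $\tau_{w}\subseteq\tau_{co}$, I would use the sub-basis of $\tau_{w}$ given by open half-spaces $\{x\in X:f(x)<c\}$ and $\{x\in X:f(x)>c\}$ for $f\in X^{*}$ and $c\in\mathbb{R}$. Each such set is the complement of a closed half-space $\{x:f(x)\geq c\}$ or $\{x:f(x)\leq c\}$, which is norm-closed and convex. Hence every sub-basic weakly open set lies in $\tau_{co}$, giving $\tau_{w}\subseteq\tau_{co}$.

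Combining both inclusions yields $\tau_{co}=\tau_{w}$, and Hausdorffness of $\tau_{co}$ is inherited from the weak topology, which separates points by continuous linear functionals (again Hahn--Banach). There is no real obstacle here beyond invoking Mazur's theorem in the right direction; the only subtlety worth flagging is that it is precisely the nontrivial convexity content of Hahn--Banach separation which provides the nontrivial inclusion $\tau_{co}\subseteq\tau_{w}$, since at the purely set-theoretic level norm-closed sets are a much larger family than weakly closed sets.
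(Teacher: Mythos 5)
Your proof is correct, and it reaches the conclusion by a somewhat different route than the paper. The paper argues via convergence: it first invokes its Corollary \ref{cor:quasi-conv+conc} (continuous quasi-monotone functions are $\tau_{co}$-continuous) to conclude that every $\ell\in X^{*}$ is $\tau_{co}$-continuous, so $\tau_{co}$-convergence implies weak convergence; for the converse it uses Mazur's theorem to see that $\overline{\conv(x_{n'})}$ is weakly closed for every subsequence and then applies the characterization of $\tau_{co}$-limits in Lemma \ref{lem:convex-leaving}. The underlying mathematical content is identical to yours --- one direction is Mazur's theorem, the other is the observation that sublevel and superlevel sets of linear functionals are closed convex (which is exactly why half-spaces generate $\tau_{co}$-open sets) --- but you work directly with sub-bases of the two topologies rather than with convergent sequences. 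Your formulation is arguably the more robust one: equality of the classes of convergent sequences does not by itself force equality of topologies, so the paper's argument as written really needs the net version of Lemma \ref{lem:convex-leaving} (which the paper only mentions in a remark), whereas your sub-base comparison gives $\tau_{co}=\tau_{w}$ with no such caveat. The Hausdorff conclusion is handled the same way in both cases, by inheriting it from $\tau_{w}$ via Hahn--Banach.
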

\begin{proof}
By Corollary \ref{cor:quasi-conv+conc} below any linear functional
$\ell\in X^{*}$ is $\tau_{co}$-continuous. Hence $x_{n}\overset{\tau_{co}}{\to}x$
implies $x_{n}\overset{\tau_{w}}{\to}x$. The converse follows from
that fact that for any subsequence$(x_{n'})$ the set $\overline{\conv(x_{n'})}$
is $\tau_{w}$-closed and $x_{n'}\overset{\tau_{w}}{\to}x$. Therefore,
$x\in\overline{\conv(x_{n'})}$ which implies $x_{n}\overset{\tau_{co}}{\to}x$
by Lemma \ref{lem:convex-leaving} above.
\end{proof}
Now similar to Banach spaces, one can easy show that reflexivity implies
weak compactness of bounded closed convex subsets.
\begin{thm}
Bounded closed convex subsets are weakly compact iff the space is
reflexive.\end{thm}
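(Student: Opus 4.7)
The plan is to prove both implications by translating between decreasing families (or families with the finite intersection property) of closed convex sets on one side and coverings by weakly open sets on the other, using the fact that the subbase of $\tau_{co}$ consists exactly of complements of closed convex sets.

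For the $(\Rightarrow)$ direction, let $(C_i)_{i\in I}$ be a decreasing net of non-empty bounded closed convex subsets. Fix any $i_0\in I$; by hypothesis $C_{i_0}$ is $\tau_{co}$-compact. Since closed convex sets are $\tau_{co}$-closed by definition of the co-convex topology, each $C_i$ with $i\ge i_0$ is a $\tau_{co}$-closed subset of $C_{i_0}$, and the family $\{C_i : i\ge i_0\}$ has the finite intersection property (it is totally ordered and all its members are non-empty). Weak compactness of $C_{i_0}$ then forces $\bigcap_{i\ge i_0} C_i\ne\varnothing$, and since the net is decreasing this equals $\bigcap_{i\in I} C_i$.

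For the $(\Leftarrow)$ direction, let $C$ be a bounded closed convex subset of $X$ and apply the Alexander subbase theorem to the subbase $\{X\setminus D : D\subset X\text{ closed convex}\}$ of $\tau_{co}$. It suffices to show that every cover of $C$ by subbasic opens admits a finite subcover, i.e.\ that for any family $(D_j)_{j\in J}$ of closed convex subsets of $X$ with
\[
C\cap\bigcap_{j\in J} D_j=\varnothing,
\]
there is a finite $J'\subset J$ with $C\cap\bigcap_{j\in J'} D_j=\varnothing$. Contrapositively, assume every finite intersection $C\cap D_{j_1}\cap\cdots\cap D_{j_n}$ is non-empty. Then the family $\{C\cap D_j\}_{j\in J}$ consists of closed convex subsets of $X$ that are bounded (being contained in the bounded set $C$) and have the finite intersection property. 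By Lemma~\ref{cor:reflex-equiv}, reflexivity yields
\[
\bigcap_{j\in J}(C\cap D_j)=C\cap\bigcap_{j\in J}D_j\ne\varnothing,
\]
which is what we needed.

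The only conceptual point that is not entirely routine is the invocation of the Alexander subbase theorem, which is necessary because the weak topology here is defined only through a subbase rather than a base: an arbitrary weakly closed subset of $C$ is merely an intersection of finite unions of closed convex sets, so reflexivity (which talks only about closed convex sets) cannot be applied directly to such a set. Alexander's theorem lets us restrict attention to subbasic covers and thereby reduces weak compactness precisely to the finite intersection property formulation of reflexivity provided by Lemma~\ref{cor:reflex-equiv}.
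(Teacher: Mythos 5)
Your proof is correct and follows essentially the same route as the paper: the direction ``reflexive $\Rightarrow$ weakly compact'' is the identical Alexander-subbase argument reducing to Lemma~\ref{cor:reflex-equiv}, and the converse is the paper's open-cover contradiction recast in the equivalent finite-intersection-property formulation of compactness (which incidentally avoids the paper's ``w.l.o.g.\ $I$ has a minimal element'' step). One small imprecision: a decreasing net indexed by a directed set need not be \emph{totally ordered} by inclusion, but the finite intersection property of $\{C_i : i\ge i_0\}$ still follows because any finite set of indices has an upper bound $k$ in $I$ and $\varnothing\ne C_k\subset\bigcap C_{i_j}$.
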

\begin{proof}
By Alexander sub-base theorem it suffices to show that each open cover
$(U_{i})_{i\in I}$ of $B$, where $U_{i}$ is a complement of a closed
convex set, has a finite subcover. For this, note that $U_{i}=X\backslash C_{i}$
and the cover property of $U_{i}$ is equivalent to 
\[
\bigcap_{i\in I}B\cap C_{i}=\varnothing.
\]
If we assume that there is no finite subcover then the collection
$(B\cap C_{i})_{i\in I}$ has finite intersection property. But then
Corollary \ref{cor:reflex-equiv} yields $\bigcap_{i\in I}B\cap C_{i}\ne\varnothing$,
which is a contradiction. 

Conversely, assume $(X,d)$ is not reflexive but any bounded closed
convex subset is weakly compact. Then there $(C_{i})_{i\in I}$ is
a decreasing family of non-empty bounded closed convex subsets such
that $\cap_{i\in I}C_{i}=\varnothing$. Assume w.l.o.g. that $I$
has a minimal element $i_{0}$. Then $U_{i}=X\backslash C_{i}$ is
an open cover of $C_{i_{0}}$, i.e. 
\[
C_{i_{0}}\subset\bigcup_{i\in I}U_{i}.
\]
Since $(C_{i})_{i\in I}$ is decreasing, $(U_{i})_{i\in I}$ is increasing.
By weak compactness, finitely many of there are sufficient to cover
$C_{i_{0}}$. Since $(U_{i})_{i\in I}$ is increasing, there exists
exactly one $i_{1}\in I$ such that $C_{i_{0}}\subset U_{i_{1}}=X\backslash C_{i_{1}}$.
But then $C_{i_{1}}=\varnothing$ contradicting our assumption.
\end{proof}
Note that on general spaces the co-convex topology is not necessary
Hausdorff. Even in case of $CAT(0)$-spaces one can construct an easy
counterexample.
\begin{example}
[Euclidean Cone of a Hilbert space]\label{ex:cone-hilbert} For the
construction of Euclidean cones see \cite[Chapter I.5]{Bridson1999}.
Let $(H,d_{H})$ be an infinite-dimensional Hilbert space and $d_{H}$
be the induced metric. The Euclidean cone over $(H,d_{h})$ is defined
as the set $C(H)=H\times[0,\infty)$ with the metric 
\[
d((x,t),(x',t'))^{2}:=t^{2}+t'^{2}-2tt'\cos(d_{\pi}(x,x'))
\]
where $d_{\pi}(x,x')=\min\{\pi,d_{H}(x,x')\}$. By \cite[Theorem II-3.14]{Bridson1999}
$(C(H),d)$ is a $CAT(0)$-space and thus uniformly $p$-convex for
any $p\in[1,\infty]$. In particular, bounded closed convex subsets
are compact w.r.t. the co-convex topology. Note that in $(H,d_{H})$
the co-convex topology agrees with the usual weak topology. Now let
$((e_{n},1))_{n\in\mathbb{N}}$ be a sequence in $C(H)$. We claim
that for any subsequence $((e_{n'},1))$ we have 
\[
\bigcap_{m\in\mathbb{N}}\overline{\conv((e_{n'},1))_{n'\ge m}}=\{(\mathbf{0},r)\,|\, r\in[a,b]\}
\]
with $a<b$ where it is easy to see that $a$ and $b$ do not depend
on the subsequence. Any point in that intersection is a limit point
of $((e_{n},1))$ which implies that $\tau_{co}(C(H))$ is not Hausdorff.
To see this, note that the projection $p$ onto the line $\{(\mathbf{0},r)\,|\, r\ge0\}$
has the following form
\[
p((x,r))=(\mathbf{0},r\cos(d_{H}(x,\mathbf{0})))
\]
for $d(x,0)\le\frac{\pi}{2}$. In particular, $d((e_{n},1))=(\mathbf{0},\cos(1))$.
Using the weak sequential convergence defined below, this means that
$(e_{n},1)\overset{w}{\to}(\mathbf{0},\cos(1))$, in particular.

\[
(\mathbf{0},\cos(1))\in\bigcap_{m\in\mathbb{N}}\overline{\conv((e_{n'},1))_{n'\ge m}}.
\]
Now we will show that the sequence of midpoints $l_{mn}$ of $(e_{n},1)$
and $(e_{m},1)$ with $m\ne n$ converges weakly sequentially to some
point $(\mathbf{0},r)$ with $r>\cos(1)$. This immediately implies
that $\bigcap_{m\in\mathbb{N}}\overline{\conv((e_{n'},1))_{n'\ge m}}$
contains more than one point and each is a limit point of $(e_{n},1)$
w.r.t. the co-convex topology.

To show that $l_{mn}$ does not weakly sequentially converge to $(\mathbf{0},\cos(1))$
we just need to show that $p(l_{mn})\ne(\mathbf{0},\cos(1))$. By
the calculus of Euclidian cones the points $l_{mn}$ have the following
form 
\[
l_{mn}=\left(\frac{e_{m}-e_{n}}{2},r_{\frac{1}{2}}\right)
\]
where $r_{\frac{1}{2}}$ is the (positive) solution of the equation
\[
r^{2}+1-2r\cos\left(\frac{\sqrt{2}}{2}\right)=\frac{1}{4}\left(2-2\cos(\sqrt{2})\right),
\]
i.e. $r_{\frac{1}{2}}=\cos\left(\frac{\sqrt{2}}{2}\right)$.

Then the projection has the form 
\begin{eqnarray*}
p(l_{mn}) & = & \left(\mathbf{0},r_{\frac{1}{2}}\cos\left(\left\Vert \frac{e_{n}-e_{m}}{2}\right\Vert \right)\right)\\
 & = & \left(\mathbf{0},\cos\left(\frac{\sqrt{2}}{2}\right)^{2}\right).
\end{eqnarray*}
Since $\cos(1)<\cos(\frac{\sqrt{2}}{2})^{2}$ we see that $l_{mn}\not\to(\mathbf{0},\cos(1))$
w.r.t. weak sequential convergence and 
\[
(\mathbf{0},\cos(\frac{\sqrt{2}}{2})^{2})\in\bigcap_{m\in\mathbb{N}}\overline{\conv((e_{n'},1))}.
\]
And this obviously does not depend on the subsequence.

Note that this space also violates the property $(N)$ defined in
\cite{Espinola2009}, more generally any cone over a (even proper)
$CAT(1)$-space which is not the sphere gives a counterexample. The
example also gives a negative answer to Question 3 of \cite{Kirk2008}.
This topology is also a counterexample to topologies similar to Monod's
$\mathcal{T}_{w}$ topology: Let $\tau_{w}^{p}$ be the weakest topology
making all maps $x\mapsto d(x,y)^{p}-d(x,z)^{p}$ for $y,z\in X$
continuous. For Hilbert spaces and $p=2$ this is the weak topology,
(compare to \cite[18. Example]{Monod2006} which should be $p=2$).
For the space $(C(H),d))$ one can show that each $\tau_{w}^{p}$
is strictly stronger that the weak sequential convergence.\end{example}
\begin{defn}
[weak lower semicontinuity] A function $f:X\to(-\infty,\infty]$
is said to be weakly l.s.c. at a given point $x\in\dom f$ if 
\[
\liminf f(x_{i})\ge f(x)
\]
whenever $(x_{i})$ is a net converging to $x$ w.r.t. $\tau_{co}$.
We say $f$ is weakly l.s.c. if it is weakly l.s.c. at every $x\in\dom f$.\end{defn}
\begin{rem*}
A priori it is not clear if $\tau_{co}$ is first-countable and thus
the continuity needs to be stated in terms of nets. In that case it
boils down to $\liminf_{n\to\infty}f(x_{n})\ge f(x)$.\end{rem*}
\begin{prop}
\label{prop:co-convex-lscts}Assume $(X,d)$ is $\infty$-convex.
Then every lower semicontinuous quasi-convex function is weakly lower
semicontinuous. In particular, the metric is lower semicontinuous.\end{prop}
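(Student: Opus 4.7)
The plan is to exploit the characterization of $\tau_{co}$ directly through its generating sets. A lower semicontinuous quasi-convex function $f\colon X\to(-\infty,\infty]$ has sublevel sets $S_c=\{y\in X : f(y)\le c\}$ that are closed (by l.s.c.) and convex (by quasi-convexity), so their complements lie in the sub-basis defining $\tau_{co}$.

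To establish weak lower semicontinuity at $x\in\dom f$, I would take a net $(x_i)_{i\in I}$ converging to $x$ in $\tau_{co}$ and suppose for contradiction that $\alpha:=\liminf_i f(x_i)<f(x)$. Pick any $c$ with $\alpha<c<f(x)$. From $\sup_{i_0}\inf_{i\ge i_0}f(x_i)=\alpha<c$ one gets that for every $i_0\in I$ there is some $i\ge i_0$ with $f(x_i)\le c$; in other words, the net is frequently in $S_c$. On the other hand, $x\notin S_c$ means $X\setminus S_c\in\tau_{co}$ is a weak neighborhood of $x$, so $x_i\in X\setminus S_c$ eventually, contradicting the previous statement.

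For the "in particular" clause, fix $y\in X$ and consider $f_y:=d(\cdot,y)$. This map is $1$-Lipschitz, hence continuous, and its sublevel sets are the closed balls around $y$. By the remark following the definition of $p$-convexity, $\infty$-convexity is equivalent to convexity of balls, so $f_y$ is quasi-convex. Applying the first part then gives that $y'\mapsto d(y',y)$ is weakly l.s.c. for each fixed $y$, which is what is meant by the metric being weakly lower semicontinuous.

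The argument is essentially a direct unfolding of the definitions, and the only point requiring care is that $\tau_{co}$ is not known to be first countable (and, in view of Example \ref{ex:cone-hilbert}, need not be Hausdorff either), which forces us to work with nets throughout and to use the "frequently/eventually" duality in place of subsequence extraction. No use of uniform convexity or reflexivity is needed here — only the very definition of $\tau_{co}$ together with the equivalence between convex balls and $\infty$-convexity.
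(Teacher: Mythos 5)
Your proof is correct and follows essentially the same route as the paper: the sublevel sets of a lower semicontinuous quasi-convex function are closed and convex, hence weakly closed, and a net converging in $\tau_{co}$ to a point outside such a set cannot frequently lie in it. If anything, your handling of the frequently/eventually duality is slightly more careful than the paper's own wording, which asserts that the net lies in the sublevel set eventually rather than merely frequently.
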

\begin{rem*}
A function is quasi-convex iff its sublevels are convex, i.e. whenever
$z$ is on a geodesic connecting $x$ and $y$ then $f(z)\le\max\{f(x),f(y)\}$. \end{rem*}
\begin{proof}
By definition of the co-convex topology, if $x_{i}\overset{\tau_{co}}{\to}x$
and $x_{i}\in C$ for some closed convex subset $C$ then $x\in C$.
Now assume $f$ is not weakly lower semicontinuous at $x$, i.e. 
\[
\liminf f(x_{i})<f(x).
\]
Then there is a $\delta>0$ such that 
\[
x_{i}\in A_{\delta}=\{y\in X\,|\, f(y)\le f(x)-\delta\}
\]
for all $i\ge i_{0}$. By quasi-convexity and lower semicontinuity
the set $A_{\delta}$ is closed convex and thus $x\in A_{\delta}$
which is a contradiction. Hence $f$ is weakly lower semicontinuous.
\end{proof}
A function $\ell:X\to\mathbb{R}$ is called quasi-monotone iff it
is both quasi-convex and quasi-concave. Similarly $\ell$ is called
linear iff it is both convex and concave. A linear function is obviously
quasi-monotone. The converse is not true in general: Every $CAT(0)$-spaces
with property $(N)$ (see \cite{Espinola2009}) admits such functionals;
for $x,y\in X$ just set $\ell(x')=d(P_{[x,y]}x',x)$ where $P_{[x,y]}$
is the projection onto the geodesic connecting $x$ and $y$. 
\begin{cor}
\label{cor:quasi-conv+conc}Assume $(X,d)$ is $\infty$-convex. Then
every continuous quasi-monotone function is weakly continuous. \end{cor}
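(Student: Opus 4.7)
The plan is to deduce the corollary directly from Proposition \ref{prop:co-convex-lscts} by applying it simultaneously to $f$ and $-f$. The observation is that continuity of $f$ is equivalent to being both lower and upper semicontinuous, and quasi-monotonicity is defined symmetrically: $f$ is quasi-convex and quasi-concave. Since $-f$ is lower semicontinuous iff $f$ is upper semicontinuous, and $-f$ is quasi-convex iff $f$ is quasi-concave, the hypothesis on $f$ passes verbatim to $-f$.

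Concretely, first I would note that by Proposition \ref{prop:co-convex-lscts} (applied to the continuous, quasi-convex function $f$), $f$ is weakly lower semicontinuous: for any net $x_i \overset{\tau_{co}}{\to} x$ one has $\liminf f(x_i) \ge f(x)$. Next I would apply the same proposition to $-f$, which is continuous (hence lower semicontinuous) and quasi-convex (because $f$ is quasi-concave). This yields $\liminf (-f)(x_i) \ge -f(x)$, i.e.\ $\limsup f(x_i) \le f(x)$ for every net $x_i \overset{\tau_{co}}{\to} x$.

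Combining the two inequalities gives $\lim f(x_i) = f(x)$ for every net $x_i \overset{\tau_{co}}{\to} x$, which is precisely weak continuity of $f$. The argument is essentially formal once Proposition \ref{prop:co-convex-lscts} is in hand, so I do not expect any obstacle; the only point deserving a brief mention is that continuity of a real-valued function with respect to a (possibly non-first-countable) topology is correctly characterized by convergence along nets, so the net formulation of weak l.s.c.\ and weak u.s.c.\ is exactly what is needed.
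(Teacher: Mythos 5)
Your proof is correct and is essentially the paper's own argument: the paper likewise deduces weak upper semicontinuity by the symmetric application of Proposition \ref{prop:co-convex-lscts} (equivalently, to $-f$) and combines it with weak lower semicontinuity. Nothing further is needed.
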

\begin{proof}
Just note that the previous theorem implies that a quasi-monotone
function is both weakly lower and upper semicontinuous.
\end{proof}
In order to get the Kadec-Klee property one needs to find limit points
which are easily representable. 
\begin{defn}
[countable reflexive] A reflexive metric space $(X,d)$ is called
countable reflexive if for each weakly convergent sequence $(x_{n})$
there is a subsequence $(x_{n'})$ such that 
\[
\Lim(x_{n})=\bigcap_{m\in\mathbb{N}}\overline{\conv(x_{n'})_{n'\ge m}}.
\]

\end{defn}
By diagonal procedure it is easy to see that one only needs to show
that for each $\epsilon>0$ there is a subsequence $(x_{n'})$ such
that
\[
B_{\epsilon}(\Lim(x_{n}))\supset\bigcap_{m\in\mathbb{N}}\overline{\conv(x_{n'})_{n'\ge m}}.
\]

\begin{lem}
\label{lem:coun-refl-quasi-mono}Any reflexive Banach space is countable
reflexive. More generally any reflexive metric space admitting quasi-monotone
functions separating points is countable reflexive. In this case the
co-convex topology is Hausdorff. \end{lem}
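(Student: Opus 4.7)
The plan is to bypass the subsequence extraction in the definition by proving the stronger assertion that $\bigcap_{m\in\mathbb{N}}\overline{\conv(x_n)_{n\ge m}}=\Lim(x_n)$ already for the full sequence; the identity subsequence then witnesses countable reflexivity. Let $\mathcal{F}$ denote the family of continuous quasi-monotone functions separating points (in the reflexive Banach case take $\mathcal{F}=X^{*}$, which works by Hahn--Banach and since continuous linear functionals are trivially quasi-monotone). By Corollary~\ref{cor:quasi-conv+conc} every $f\in\mathcal{F}$ is $\tau_{co}$-continuous. The Hausdorff claim is then immediate: given $x\ne y$ choose $f\in\mathcal{F}$ with $f(x)\ne f(y)$ and pull back two disjoint open intervals in $\mathbb{R}$ to obtain disjoint $\tau_{co}$-open neighborhoods of $x$ and $y$. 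Consequently any weakly convergent sequence has a unique weak limit $x_\infty$ and $\Lim(x_n)=\{x_\infty\}$.

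The technical core is the identity
\[
\sup_{z\in\overline{\conv A}}f(z)=\sup_{a\in A}f(a)
\]
for any $A\subset X$ and continuous quasi-convex $f$, with the symmetric identity for the infimum when $f$ is quasi-concave. Quasi-convexity gives $f(\gamma_t)\le\max(f(x),f(y))$ along any geodesic $\gamma$ joining $x,y\in A$, so $\sup_{G_1}f\le\sup_A f$; iterating along the hull construction $G_n$ from the previous section yields $\sup_{\conv A}f\le\sup_A f$, and continuity of $f$ extends this to the closure. A quasi-monotone $f$ therefore controls both the supremum and infimum of $\overline{\conv A}$ by those of $A$.

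Now fix any $f\in\mathcal{F}$. Since $f$ is $\tau_{co}$-continuous and $x_n\to x_\infty$ weakly, $f(x_n)\to f(x_\infty)$. For any $z\in\bigcap_m\overline{\conv(x_n)_{n\ge m}}$ and each $m$ the identity above gives
\[
\inf_{n\ge m}f(x_n)\le f(z)\le\sup_{n\ge m}f(x_n),
\]
and letting $m\to\infty$ forces $f(z)=f(x_\infty)$. Because this holds for every $f\in\mathcal{F}$ and $\mathcal{F}$ separates points, $z=x_\infty$; the reverse inclusion $\{x_\infty\}\subset\bigcap_m\overline{\conv(x_n)_{n\ge m}}$ is Lemma~\ref{lem:convex-leaving}. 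The one subtle point is the closure step in the sup-identity: the combinatorial inequality on $\conv A$ is immediate, but extending it to $\overline{\conv A}$ uses continuity of $f$ in an essential way, which is exactly what Corollary~\ref{cor:quasi-conv+conc} upgrades to $\tau_{co}$-continuity and thereby makes the squeeze argument work.
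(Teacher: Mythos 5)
Your proof is correct and follows essentially the same route as the paper's: both hinge on Corollary~\ref{cor:quasi-conv+conc} (weak continuity of quasi-monotone functions) together with the fact that a quasi-convex function is controlled on $\overline{\conv A}$ by its values on $A$, applied to the tails of the sequence. The only cosmetic difference is that you run a direct two-sided squeeze where the paper argues by contradiction with a single separating functional, and your derivation of Hausdorffness via preimages of disjoint intervals is a slightly cleaner packaging of the same observation.
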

\begin{proof}
If $x_{n}\overset{\tau_{co}}{\to}x$ and $x\ne y\in\bigcap_{m}\overline{\conv(x_{n})_{n\ge m}}$
then there is a quasi-monotone functional $\ell$ such that $\ell(y)>\ell(x)$.
Since $\ell$ is weakly continuous we have $\ell(x_{n})\to x$ and
thus by quasi-convexity of $\ell$ also $\ell(y)>\ell(x')$ for all
$x'\in\overline{\conv(x_{n})_{n\ge m}}$ with $m\in\mathbb{N}$ sufficiently
large $m\in\mathbb{N}$ However, this contradicts $y\in\bigcap_{m}\overline{\conv(x_{n})_{n\ge m}}$
and also shows that $\tau_{co}$ is Hausdorff.\end{proof}
\begin{thm}
[Nearly uniform convexity]\label{thm:NUC} Let $(X,d)$ be nearly
uniformly convex and countable reflexive. Then for any $\epsilon$-separated
sequence $(x_{n})$ in $B_{R}(y)$ there is a weak limit point of
$(x_{n})$ contained in the ball $B_{(1-\rho)R}(y)$.\end{thm}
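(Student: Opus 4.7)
The strategy is to use nearly uniform convexity tail-by-tail to produce candidate weak limits inside the smaller ball, and then realize one of them as an honest weak subsequential limit via countable reflexivity.

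For every $m \in \mathbb{N}$ the tail $(x_{n})_{n\ge m}$ is still an $\epsilon$-separated infinite family contained in $B_{R}(y)$, so nearly uniform convexity yields one and the same $\rho=\rho(\epsilon,R)\in(0,1)$, independent of $m$, with
\[
K_{m} := B_{(1-\rho)R}(y) \cap \overline{\conv(x_{n})_{n\ge m}} \ne \varnothing.
\]
Each $K_{m}$ is closed, convex, bounded, and the family is decreasing in $m$, so reflexivity gives $\bigcap_{m}K_{m} \ne \varnothing$. Hence there is at least one point $z \in B_{(1-\rho)R}(y)$ that lies in the closed convex hull of every tail; it remains to promote such a point to a genuine weak subsequential limit.

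For this I would first pass to a weakly convergent subsequence $(x_{n_{k}})$ of $(x_{n})$, possible because $(x_{n})$ lies in the weakly compact set $B_{R}(y)$ (reflexivity applied to a ball, which is convex by $\infty$-convexity). Applying countable reflexivity to $(x_{n_{k}})$ gives a further subsequence $(x_{n_{k_{j}}})$ with
\[
\Lim(x_{n_{k}}) = \bigcap_{m\in\mathbb{N}} \overline{\conv(x_{n_{k_{j}}})_{k_{j}\ge m}}.
\]
Repeating the nearly uniform convexity plus reflexivity argument of the previous paragraph on the still $\epsilon$-separated subsequence $(x_{n_{k_{j}}})$ then gives
\[
B_{(1-\rho)R}(y) \cap \Lim(x_{n_{k}}) = B_{(1-\rho)R}(y) \cap \bigcap_{m} \overline{\conv(x_{n_{k_{j}}})_{k_{j}\ge m}} \ne \varnothing,
\]
and any element of this intersection is a weak limit of $(x_{n_{k}})$, hence a weak limit point of $(x_{n})$, lying inside $B_{(1-\rho)R}(y)$.

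The main obstacle is the first step of the preceding paragraph, namely the extraction of a weakly convergent subsequence $(x_{n_{k}})$. Reflexivity yields weak compactness of bounded closed convex sets, but $\tau_{co}$ need not be first countable, so weak sequential compactness is not automatic. I expect this to be handled by a diagonal selection on the nested hulls $\overline{\conv(x_{n})_{n\ge m}}$, possibly combined with the corollary following Lemma~\ref{lem:convex-leaving} to avoid a countable approximating set. Modulo this technical extraction, the argument is a clean three-step combination of nearly uniform convexity, reflexivity, and countable reflexivity.
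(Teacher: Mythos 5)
Your argument is essentially the paper's own proof: nearly uniform convexity makes each set $B_{(1-\rho)R}(y)\cap\overline{\conv(x_{n})_{n\ge m}}$ non-empty, reflexivity intersects the resulting decreasing family of closed bounded convex sets, and countable reflexivity identifies $\bigcap_{m}\overline{\conv(x_{n'})_{n'\ge m}}$ with $\Lim(x_{n'})$ so that the intersection point is a genuine weak limit point. The one step you flag as the main obstacle --- first extracting a subsequence to which countable reflexivity applies --- is precisely what the paper absorbs into its opening ``assume w.l.o.g.\ that $(x_{n})$ is chosen such that $\Lim(x_{n})=\bigcap_{m}C_{m}$'', so your write-up is, if anything, more explicit than the source about the only delicate point.
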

\begin{proof}
If $(x_{n})$ is $\epsilon$-separated with $d(x_{n},y)\le R$ and
assume w.l.o.g. that $(x_{n})$ is chosen such that
\[
\Lim(x_{n})=\bigcap_{m\in\mathbb{N}}C_{m}
\]
where $C_{m}=\overline{\conv(x_{n})_{n\ge m}}$. We know by nearly
uniform convexity there is a $\rho>0$ such that 
\[
\tilde{C}_{m}=B_{(1-\rho)R}(y)\cap C_{m}\ne\varnothing.
\]
Since $\tilde{C}_{m}$ is non-decreasing closed convex and non-empty,
we see by reflexivity that $\cap_{m}\tilde{C}_{m}\ne\varnothing$
and hence $B_{(1-\rho)R}(y)\cap\Lim(x_{n})\ne\varnothing$. \end{proof}
\begin{thm}
[Kadec-Klee property]Let $(X,d)$ be strictly $\infty$-convex, nearly
uniformly convex and countable reflexive. Suppose some fixed $y\in X$
and for each weak limit point $x$ of $(x_{n})$ one has $d(x_{n},y)\to d(x,y)$
then $(x_{n})$ has exactly one limit point and $(x_{n})$ converges
strongly, i.e. norm plus weak convergence implies strong convergence.\end{thm}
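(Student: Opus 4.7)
Let $r := \lim_{n \to \infty} d(x_n, y)$. This limit exists because reflexivity gives at least one weak limit point $x^{0}$ of $(x_n)$, and the hypothesis then forces $d(x_n, y) \to d(x^{0}, y) =: r$; in particular every weak limit point of $(x_n)$ lies on the sphere $\partial B_r(y)$. If $r = 0$ then $x_n \to y$ strongly and $y$ is automatically the unique weak limit point, so I focus on the case $r > 0$.

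For uniqueness of the weak limit point I combine the convexity of the set of weak limit points in an $\infty$-convex space (noted earlier in the paper) with strict $\infty$-convexity: for any two distinct $x_1, x_2 \in \partial B_r(y)$ the midpoint satisfies $d(m(x_1, x_2), y) < \max\{d(x_1, y), d(x_2, y)\} = r$ by strict $\infty$-convexity, so the midpoint escapes the sphere. Hence a convex subset of $\partial B_r(y)$ is a singleton, and I denote by $x$ the unique weak limit point.

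For strong convergence I argue by contradiction. If $(x_n)$ does not converge strongly to $x$, there exist $\epsilon > 0$ and a subsequence $(x_{n_k})$ with $d(x_{n_k}, x) \ge \epsilon$. Using the standard Ramsey-type dichotomy for bounded sequences in a complete metric space, a sub-subsequence is either Cauchy (hence convergent) or $\epsilon'$-separated for some $\epsilon' > 0$. In the Cauchy case the strong limit $x^{*}$ is a weak limit point of $(x_n)$ with $d(x^{*}, y) = r$; uniqueness forces $x^{*} = x$, contradicting $d(x_{n_k}, x) \ge \epsilon$. In the $\epsilon'$-separated case I fix $R_0 > r$ so that $\rho_0 := \rho(\epsilon', R_0) > 0$ is the modulus from nearly uniform convexity, then pick $\delta > 0$ small enough that $(1-\rho_0)(r+\delta) < r$. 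Since $d(x_{n_k}, y) \to r$, the tail of the separated subsequence lies in $B_{r+\delta}(y)$, and Theorem \ref{thm:NUC} produces a weak limit point $z$ of that subsequence inside $B_{(1-\rho_0)(r+\delta)}(y)$. Since $z$ is then also a weak limit point of $(x_n)$, uniqueness gives $z = x$, so $r = d(x, y) \le (1-\rho_0)(r+\delta) < r$, a contradiction.

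The main technical obstacle is keeping the term ``weak limit point'' consistent across the sequence and its subsequences: uniqueness is a statement about weak limit points of the full sequence $(x_n)$, whereas both branches of the case analysis produce weak limits of sub-subsequences which must be identified with $x$. This forces one to read ``each weak limit point'' as ``each weak cluster point of $(x_n)$'', so that every subsequential weak limit inherits the distance-$r$ property and the convexity/strict-$\infty$-convexity argument transfers from the whole sequence to the subsequences used in the NUC step.
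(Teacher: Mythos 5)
Your proof is correct and follows essentially the same route as the paper: uniqueness of the weak limit via convexity of the limit set plus strict $\infty$-convexity of $d(\cdot,y)$, then a contradiction from Theorem \ref{thm:NUC} applied to an $\epsilon$-separated subsequence. You are in fact more careful than the paper on two points it glosses over --- the radius bookkeeping $(1-\rho_0)(r+\delta)<r$ and the need to read ``weak limit point'' as ``weak cluster point'' so that subsequential limits inherit $d(\cdot,y)=r$ --- both of which are genuine (if minor) gaps in the paper's own terser argument.
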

\begin{cor}
If, in addition, $\tau_{co}$ is Hausdorff then $x_{n}\overset{\tau_{co}}{\to}x$
and $d(x_{n},y)\to d(x,y)$ implies $x_{n}\to x$.\end{cor}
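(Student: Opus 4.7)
My plan is to split the theorem into (a) uniqueness of the weak limit point and (b) strong convergence, both by contradiction.

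For (a), I would take $x, x' \in \Lim(x_{n})$. The hypothesis forces $d(x,y) = d(x',y) = \lim d(x_{n}, y) =: r$. Since $\Lim(x_{n})$ is convex in an $\infty$-convex space, the midpoint $m = m(x, x')$ is again a weak limit point of $(x_{n})$, so the hypothesis yields $d(m, y) = r$; strict $\infty$-convexity then gives $d(m, y) < r$ whenever $x \neq x'$, a contradiction. Hence $\Lim(x_{n}) = \{x\}$.

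For (b), suppose $(x_{n}) \not\to x$ strongly and extract $(x_{n_{k}})$ with $d(x_{n_{k}}, x) \ge \epsilon > 0$. I would then apply countable reflexivity to $(x_{n_{k}})$ (which is weakly convergent since $x \in \Lim((x_{n_{k}}))$) to obtain a further subsequence $(x_{n'})$ with $\bigcap_{m} \overline{\conv(x_{n'})_{n' \ge m}} = \Lim((x_{n_{k}}))$. The midpoint argument from (a), applied within the convex set $\Lim((x_{n_{k}}))$ and combined with weak lower semicontinuity of $d(\cdot, y)$, collapses $\Lim((x_{n_{k}}))$ to $\{x\}$, making the tails' intersection itself equal $\{x\}$. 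No sub-subsequence of $(x_{n'})$ can then be Cauchy, for its strong limit would lie in this intersection and contradict $d(x_{n'}, x) \ge \epsilon$; so after a further refinement $(x_{n'})$ may be taken $\eta$-separated, and monotonicity under further subsequences preserves $\bigcap_{m} \overline{\conv(x_{n'})_{n' \ge m}} = \{x\}$.

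Next I fix $R > r$ once and for all, set $\rho_{0} = \rho(\eta, R) > 0$ using the definition of nearly uniform convexity, and choose $\delta > 0$ with $r+\delta \le R$ and $(1-\rho_{0})(r+\delta) < r$. After one final refinement ensuring $d(x_{n'}, y) \le r + \delta$, the defining inequality of nearly uniform convexity yields
\[
\overline{\conv(x_{n'})_{n' \ge m}} \cap B_{(1-\rho_{0})(r+\delta)}(y) \ne \varnothing
\]
for every $m$, a decreasing family of non-empty closed bounded convex sets. Reflexivity then produces a common point, which must lie in $\bigcap_{m}\overline{\conv(x_{n'})_{n' \ge m}} \cap B_{(1-\rho_{0})(r+\delta)}(y) = \{x\} \cap B_{(1-\rho_{0})(r+\delta)}(y)$, forcing $d(x,y) \le (1-\rho_{0})(r+\delta) < r$ and contradicting $d(x,y) = r$.

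The principal obstacle will be reducing $\Lim((x_{n_{k}}))$ to $\{x\}$ for the bad subsequence: the hypothesis is phrased only for weak limits of the original $(x_{n})$, so the midpoint trick, strict $\infty$-convexity and weak lower semicontinuity of $d(\cdot, y)$ must be bundled carefully to forbid extra limit points at the subsequence level. A secondary technical issue is the book-keeping of the two radii in the definition of nearly uniform convexity — the parameter $R$ controlling $\rho_{0}$ versus the sharper actual bound $r+\delta$ — which is why $R$ must be fixed first and $\delta$ only then shrunk, so that $\rho_{0}$ stays uniformly positive and $(1-\rho_{0})(r+\delta) < r$ is achievable. The corollary is then immediate, since in a Hausdorff $\tau_{co}$ the condition $x_{n} \overset{\tau_{co}}{\to} x$ already pins $\Lim(x_{n}) = \{x\}$.
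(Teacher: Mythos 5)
Your argument is correct and follows essentially the same route as the paper: uniqueness of the weak limit via convexity of $\Lim(x_{n})$ together with strict $\infty$-convexity of $d(\cdot,y)$, then an $\epsilon$-separated subsequence fed into nearly uniform convexity plus reflexivity (you inline the paper's Theorem \ref{thm:NUC} rather than citing it, and you should also dispose of the trivial case $r=0$ separately, since $(1-\rho_{0})(r+\delta)<r$ is unachievable there). The one shaky sub-step --- collapsing $\Lim((x_{n_{k}}))$ to $\{x\}$ via midpoints and weak lower semicontinuity, which only yields $d(w,y)\le r$ for limit points $w$ of the subsequence and hence no contradiction on its own --- is harmless for this corollary precisely because, as you note at the end, Hausdorffness of $\tau_{co}$ already forces every subsequence of $(x_{n})$ to have $\{x\}$ as its unique weak limit point.
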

\begin{proof}
[Proof of the Theorem]Since strong convergence implies weak and norm
convergence, we only need to show the converse. For this let $x_{n}$
be some weakly convergent sequence. Note that $d(x,y)=\lim d(x_{n},y)=const$
for all limit points $x$ of $(x_{n})_{n}$. Since $x\mapsto d(\cdot,y)$
is strictly quasi-convex and the set of limit points is convex, there
can be at most one limit point, i.e. $x_{n}\overset{\tau_{co}}{\to}x$
for a unique $x\in X$. If $d(x,y)=0$ then $x=y$ and $x_{n}\to x$
strongly. 

Now assume $d(x,y)=R>0$. If $(x_{n})$ is not Cauchy then there is
a subsequence $(x_{n'})$ still weakly converging to $x$ which is
$\epsilon$-separated for some $\epsilon>0$. By the Theorem \ref{thm:NUC}
there is a limit point $x^{*}$ of $(x_{n'})$ such that $d(x^{*},y)<R$.
But this contradicts the fact that $x^{*}=x$ and $d(x,y)=R$. Therefore,
$x=y$.
\end{proof}

\subsection*{A ``topology'' via asymptotic centers}

A more popular notion of convergence is the weak sequential convergence.
Note, however, it is an open problem whether this ``topology'' is
actually generated by a topology, see \cite[Question 3.1.8.]{Bacak2014a}.
Given a sequence $(x_{n})$ in $X$ define the following function
\[
\omega(x,(x_{n}))=\limsup_{n\to\infty}d(x,x_{n}).
\]

\begin{lem}
Assume $(X,d)$ is uniformly $\infty$-convex. Then function $\omega(\cdot,(x_{n}))$
has a unique minimizer.\end{lem}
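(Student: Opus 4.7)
My plan is to separate existence from uniqueness. Existence will come from reflexivity applied to the sublevel sets of $\omega := \omega(\cdot,(x_n))$: uniform $\infty$-convexity implies nearly uniform convexity (noted right after that definition), hence reflexivity by the earlier theorem. Uniqueness will follow directly from the uniform $\infty$-convex inequality applied at the midpoint of two putative minimizers.

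\emph{Setup and existence.} I may assume $(x_n)$ is bounded so that $\omega$ is finite everywhere (otherwise the statement is vacuous). As a $\limsup$ of $1$-Lipschitz functions $\omega$ is itself $1$-Lipschitz, hence continuous. Applying $\infty$-convexity pointwise and using $\limsup\max=\max\limsup$ gives $\omega(m(x,y)) \le \max\{\omega(x),\omega(y)\}$ for every midpoint, so the closed sublevel sets of $\omega$ are convex. Let $r := \inf_x \omega(x)$ and $C_k := \{\omega \le r+1/k\}$; each $C_k$ is non-empty by definition of $r$, closed and convex by the preceding sentence, and bounded because $d(x,y) \le d(x,x_n)+d(y,x_n)$ followed by $\limsup$ gives $\diam C_k \le 2(r+1/k)$. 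Reflexivity now forces $\bigcap_k C_k \ne \varnothing$, and any such point minimizes $\omega$.

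\emph{Uniqueness.} Suppose $y_1 \ne y_2$ both achieve $\omega = r$. If $r=0$ then $d(y_i,x_n)\to 0$ forces $y_1=y_2$, so $r>0$. Let $\delta := d(y_1,y_2) > 0$ and $m := m(y_1,y_2)$, and pick $M$ and $N$ with $\max\{d(y_1,x_n),d(y_2,x_n)\} \le M$ for $n \ge N$; this is possible because both $\omega(y_i) = r < \infty$. Set $\epsilon_0 := \delta/(2M)$. Then $d(y_1,y_2) > \epsilon_0\max\{d(y_1,x_n),d(y_2,x_n)\}$ for all $n \ge N$, so uniform $\infty$-convexity (with the monotone $\rho_\infty$ from the remark) gives
\[
d(m,x_n) \le (1-\rho_\infty(\epsilon_0))\max\{d(y_1,x_n),d(y_2,x_n)\}.
\]
Taking $\limsup$ yields $\omega(m) \le (1-\rho_\infty(\epsilon_0))\,r < r$, contradicting the minimality of $r$.

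\emph{Main obstacle.} The only delicate point is obtaining a single modulus $\rho_\infty(\epsilon_0)$ that works in the uniqueness step for all large $n$: one needs a uniform upper bound on $\max\{d(y_1,x_n),d(y_2,x_n)\}$ so that a single $\epsilon_0$ satisfies the hypothesis of uniform $\infty$-convexity simultaneously for each such $n$. This is exactly what $r<\infty$ provides. Everything else is a routine combination of the earlier reflexivity theorem and the defining inequality of uniform $\infty$-convexity.
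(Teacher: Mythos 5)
Your proof is correct and follows essentially the same route as the paper: existence from reflexivity (via nearly uniform convexity) applied to the closed, bounded, convex sublevel sets of $\omega$, and uniqueness by applying the uniform $\infty$-convexity inequality at the midpoint of two minimizers and passing to the $\limsup$. Your handling of the modulus is in fact slightly cleaner than the paper's (which extracts a subsequence along which both distance sequences converge), since your uniform bound $M$ yields a single admissible $\epsilon_0$ directly.
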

\begin{proof}
It is not difficult to see that the sublevels of $\omega(\cdot,(x_{n}))$
are closed bounded and convex. This reflexivity implies existence
of minimizers. Assume $x,x'$ are minimizers and $x_{\frac{1}{2}}$
their midpoint. If $\omega(x,(x_{n}))=0$ then obviously $x_{n}\to x=x'$.
So assume $\omega(x,(x_{n})_{n})=c>0$. Then we can choose a subsequence
$(x_{n'})_{n'}$ such that $\lim_{n'\to\infty}d(x,x_{n'})$ and $\lim_{n'\to\infty}d(x',x_{n'})$
exists and are equal. If $x\ne x'$ then there is an $\epsilon>0$
such that $d(x,x')\ge2\epsilon c$. This yields 
\begin{eqnarray*}
\limsup_{n'\to\infty}d(x_{\frac{1}{2}},x_{n'}) & \le & (1-\rho(\epsilon))\lim_{n'\to\infty}\max\{d(x,x_{n'}),d(x',x_{n'})\}\\
 & < & c.
\end{eqnarray*}
But this contradicts $x$ and $x'$ be minimizers. Hence $x=x'$.
\end{proof}
The minimizer of $\omega(\cdot,(x_{n}))$ is called the asymptotic
center. With the help of this we can define the weak sequential convergence
as follows.
\begin{defn}
[Weak sequential convergence] We say that a sequence $(x_{n})_{n\in\mathbb{N}}$
converges weakly sequentially to a point $x$ if $x$ is the asymptotic
center for each subsequence of $(x_{n})$. We denote this by $x_{n}\overset{w}{\to}x$.

For $CAT(0)$-spaces it is easy to see that $x_{n}\overset{w}{\to}x$
implies $x_{n}\overset{\tau_{co}}{\to}x$, i.e. the weak topology
is weaker than the weak sequential convergence (see \cite[Lemma 3.2.1]{Bacak2014a}).
Later we will show that the weak sequential limits can be strongly
approximated by barycenters, which can be seen as a generalization
of the Banach-Saks property (see below). If, in addition, the barycenter
of finitely many points is in the convex hull of those points, one
immediate gets that $x_{n}\overset{w}{\to}x$ implies $x_{n}\overset{\tau_{co}}{\to}x$.\end{defn}
\begin{prop}
Each bounded sequence $(x_{n})$ has a subsequence $(x_{n'})$ such
that $x_{n'}\overset{w}{\to}x$.\end{prop}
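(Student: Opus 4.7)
The plan is to adapt the classical Kirk-style argument: among all subsequences of $(x_n)$, produce one that minimizes the asymptotic radius, and then verify directly from the minimality that its asymptotic center is stable under passage to further subsequences.

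First I would record two observations about the functional $r((y_n)):=\omega(c(y_n),(y_n))$, where $c(y_n)$ denotes the unique asymptotic center of a bounded sequence $(y_n)$ (available by the previous lemma, since the ambient space is uniformly $\infty$-convex and hence reflexive via nearly uniform convexity). Observation one: $r$ depends only on the tail of $(y_n)$, because both $\omega(\cdot,(y_n))=\limsup d(\cdot,y_n)$ and the minimizer $c(y_n)$ are insensitive to removal of finitely many terms. Observation two: $r$ is monotone along subsequences, i.e. $r(y_{n_k})\le r(y_n)$, since $\omega(z,(y_{n_k}))\le\omega(z,(y_n))$ for every $z\in X$ and therefore the infimum over $z$ can only decrease.

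Next, set $r^*:=\inf r((y_n))$ where the infimum runs over all subsequences of $(x_n)$. Extract nested subsequences $(x_n^{(1)})\supset(x_n^{(2)})\supset\cdots$ of $(x_n)$ with $r((x_n^{(k)}))<r^*+\tfrac{1}{k}$, and form the diagonal $x_{n'}:=x_n^{(n)}$. For every fixed $k$, the tail $(x_{n'})_{n'\ge k}$ is a subsequence of $(x_n^{(k)})$, so by the two observations above $r((x_{n'}))=r((x_{n'})_{n'\ge k})\le r((x_n^{(k)}))<r^*+\tfrac{1}{k}$. Letting $k\to\infty$ and combining with $r((x_{n'}))\ge r^*$ yields $r((x_{n'}))=r^*$.

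Finally I would set $x:=c(x_{n'})$ and verify that $x_{n'}\overset{w}{\to}x$. Let $(x_{n''})$ be any further subsequence and let $y:=c(x_{n''})$. Monotonicity plus minimality force
\[
r^*\le r((x_{n''}))\le r((x_{n'}))=r^*,
\]
so $\omega(y,(x_{n''}))=r^*$. On the other hand $\omega(x,(x_{n''}))\le\omega(x,(x_{n'}))=r^*=\omega(y,(x_{n''}))$, meaning that $x$ also attains the minimum in the definition of the asymptotic center of $(x_{n''})$. By uniqueness from the previous lemma, $x=y$, which is exactly the definition of $x_{n'}\overset{w}{\to}x$.

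No step is genuinely hard; the only mildly technical point is the diagonal extraction in the middle paragraph, which hinges on $r$ being tail-invariant so that a diagonal subsequence actually inherits the infimum $r^*$ rather than only approaching it. Everything else is bookkeeping with the $\omega$-functional and the uniqueness of asymptotic centers already established.
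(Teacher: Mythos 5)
The last step of your argument is sound: once you have a subsequence all of whose subsequences share one asymptotic radius, the inequality $\omega(x,(x_{n''}))\le\omega(x,(x_{n'}))$ together with uniqueness of asymptotic centers forces every subsequence to have the same center, which is exactly weak sequential convergence. (The paper itself omits the proof and defers to the literature, where the argument runs along these lines.) The gap is in your diagonal extraction. You ask for \emph{nested} subsequences $(x_n^{(1)})\supset(x_n^{(2)})\supset\cdots$ with $r((x_n^{(k)}))<r^*+\tfrac1k$, where $r^*$ is the infimum over all subsequences of the \emph{original} sequence. Such a nested family need not exist: once you have committed to $(x_n^{(1)})$, the infimum of $r$ over subsequences of $(x_n^{(1)})$ may already exceed $r^*+\tfrac1k$ for all large $k$, and then no admissible $(x_n^{(k)})$ can be found inside it. Concretely, in a Hilbert space let $(x_n)$ interleave an orthonormal family $(e_j)$ (every subsequence has asymptotic radius $1$, so $r^*=1$) with the family $(\tfrac75 f_j)$, where $(f_j)$ is orthonormal and orthogonal to all $e_j$ (every subsequence of it has radius $\tfrac75$). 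Choosing $(x_n^{(1)})=(\tfrac75 f_j)$ satisfies $r<r^*+1=2$, but no subsequence of it has radius below $1+\tfrac13$, so the construction halts at $k=3$. Your two observations only give $r((x_n^{(k)}))\le r((x_n^{(k-1)}))$; they do not supply the strict improvement toward the global infimum $r^*$ that your extraction presupposes.

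The standard repair (Lim--Goebel) is to replace the fixed target $r^*$ by a running infimum: set $r_k:=\inf\{r((y_n))\,:\,(y_n)\text{ a subsequence of }(x_n^{(k)})\}$, choose $(x_n^{(k+1)})$ inside $(x_n^{(k)})$ with $r((x_n^{(k+1)}))<r_k+2^{-k}$, and observe that $(r_k)$ is nondecreasing and bounded, hence convergent to some $r_\infty$, which may well be strictly larger than $r^*$. Your tail-invariance and monotonicity observations then show that the diagonal sequence is \emph{regular}: every one of its subsequences has asymptotic radius exactly $r_\infty$. That is all your final paragraph actually uses --- nowhere do you need the common radius to equal the global infimum $r^*$ --- so with this modification your proof goes through.
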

\begin{proof}
The proof can be found in \cite[Proposition 3.2.1]{Bacak2014a}. Since
it is rather technical we leave it out.
\end{proof}
A different characterization of this convergence can be given as follows
(see \cite[Proposition 3.2.2]{Bacak2014a}).
\begin{prop}
Assume $(X,d)$ is uniformly $\infty$-convex. Let $(x_{n})$ be a
bounded sequence and $x\in X$. The the following are equivalent:
\begin{enumerate}
\item The sequence $(x_{n})$ converges weakly sequentially to $x$
\item For every geodesic $\gamma:[0,1]\to X$ with $x\in\gamma_{[0,1]}$,
we have $P_{\gamma}x_{n}\to x$ as $n\to\infty$.
\item For every $y\in X$, we have $P_{[x,y]}x_{n}\to x$ as $n\to\infty$.
\end{enumerate}
\end{prop}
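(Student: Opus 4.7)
The plan is to prove the equivalence cyclically as $(1) \Rightarrow (2) \Rightarrow (3) \Rightarrow (1)$. The arrow $(2) \Rightarrow (3)$ is immediate upon taking $\gamma$ to be the geodesic segment $[x,y]$. The remaining two arrows rely on two ingredients: the projection inequality $d(x_n, P_\gamma x_n) \le d(x_n, w)$ for every $w \in \gamma_{[0,1]}$, and the uniqueness of the asymptotic center as the minimizer of $\omega(\cdot,(x_n))$ established in the preceding lemma. Uniform $\infty$-convexity implies strict $\infty$-convexity, so geodesics are unique and $P_\gamma$ onto the compact convex set $\gamma_{[0,1]}$ is single-valued and well-defined.

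For $(1) \Rightarrow (2)$, I would fix a geodesic $\gamma$ through $x$ and set $z_n := P_\gamma x_n$. By compactness of $\gamma_{[0,1]}$ it suffices to show that every convergent subsequence $z_{n_k} \to z \in \gamma_{[0,1]}$ satisfies $z = x$. Inserting $w = x$ into the projection inequality yields $d(x_{n_k}, z_{n_k}) \le d(x_{n_k}, x)$, and since $|d(x_{n_k}, z_{n_k}) - d(x_{n_k}, z)| \le d(z_{n_k}, z) \to 0$, passing to $\limsup$ gives
\[
\omega(z,(x_{n_k})) \;=\; \limsup_{k \to \infty} d(x_{n_k}, z_{n_k}) \;\le\; \omega(x,(x_{n_k})).
\]
Hypothesis $(1)$ applied to $(x_{n_k})$ says $x$ is the asymptotic center of that subsequence; uniqueness of the minimizer then forces $z = x$.

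For $(3) \Rightarrow (1)$, pick an arbitrary subsequence $(x_{n_k})$ with asymptotic center $x^*$; the goal is to show $x^* = x$. Apply $(3)$ with $y = x^*$, so that $z_n := P_{[x,x^*]} x_n \to x$. The projection inequality at $w = x^* \in [x,x^*]$ gives $d(x_n, z_n) \le d(x_n, x^*)$, while $z_n \to x$ forces $|d(x_n, z_n) - d(x_n, x)| \le d(z_n, x) \to 0$; taking $\limsup$ along $(x_{n_k})$ therefore produces
\[
\omega(x,(x_{n_k})) \;\le\; \omega(x^*,(x_{n_k})),
\]
and uniqueness of the asymptotic center gives $x^* = x$.

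The only delicate point is the well-definedness of $P_\gamma$ on the geodesic segment, which is secured by strict $\infty$-convexity (extracted from uniform $\infty$-convexity); the rest is routine $\limsup$ manipulation combined with the uniqueness clause of the previous lemma. In effect the argument transports the standard $CAT(0)$ proof to this broader setting, with uniform $\infty$-convexity supplying exactly the uniqueness of geodesics, projections, and asymptotic centers that the $CAT(0)$ structure supplies in the classical case.
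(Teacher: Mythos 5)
Your proof is correct. The implication $(1)\Rightarrow(2)$ is essentially the paper's argument: both extract a convergent subsequence of the projections $P_{\gamma}x_{n}$ on the compact segment $\gamma_{[0,1]}$, use the projection inequality $d(x_{n},P_{\gamma}x_{n})\le d(x_{n},x)$ to compare $\omega(z,(x_{n_{k}}))$ with $\omega(x,(x_{n_{k}}))$, and invoke uniqueness of the asymptotic center (your write-up is in fact cleaner, as the paper's version contains a typo in the non-convergence hypothesis). Where you genuinely diverge is $(3)\Rightarrow(1)$. The paper argues by contradiction: it invokes the earlier proposition that every bounded sequence has a weakly sequentially convergent subsequence (whose ``rather technical'' proof is omitted), extracts $x_{n'}\overset{w}{\to}y\ne x$, and then reuses $(1)\Rightarrow(2)$ to get $P_{[x,y]}x_{n'}\to y$, contradicting $(3)$. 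You instead take an arbitrary subsequence, let $x^{*}$ be its asymptotic center (which needs only the preceding uniqueness lemma, not the compactness proposition), and run the projection inequality directly on the segment $[x,x^{*}]$ to conclude $\omega(x,(x_{n_{k}}))\le\omega(x^{*},(x_{n_{k}}))$ and hence $x=x^{*}$. This buys two things: your argument is more self-contained, avoiding the omitted compactness proof entirely; and it sidesteps a delicate point in the paper's contradiction argument, namely that the weak sequential limit $y$ of a further subsequence of a ``bad'' subsequence is not obviously different from $x$ (asymptotic centers are not inherited by further subsequences). The only cost is that you must note the degenerate case $x^{*}=x$, where $[x,x^{*}]$ is a point, but there the conclusion is vacuous anyway.
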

\begin{proof}
(i)$\Longrightarrow$(ii): Let $\gamma$ be some geodesic containing
$x$. If 
\[
\lim d(P_{\gamma}x_{n},x)\ge0
\]
then there is a subsequence $(x_{n'})$ such that 
\[
P_{\gamma}y_{n}\to y\in\gamma_{[0,1]}\backslash\{x\}.
\]
But then $d(P_{\gamma}x_{n'},x_{n'})<d(x,x_{n'})$ which implies 
\[
\limsup_{n\to\infty}d(y,x_{n'})=\limsup_{n\to\infty}d(P_{\gamma}x_{n'},x_{n'})\le\limsup d(x,x_{n'})
\]
and contradicts uniqueness of the asymptotic center of $(x_{n'})$. 

(ii)$\Longrightarrow$(iii): Trivial

(iii)$\Longrightarrow$(i): Assume $(x_{n})$ does not converge weakly
sequentially to $x$. Then for some subsequence $x_{n'}\overset{w}{\to}y\in X\backslash\{x\}$.
Then by the part above $P_{[x,y]}x_{n'}\to y$. But this contradicts
the assumption $P_{[x,y]}x_{n'}\to x$. Hence $x_{n}\overset{w}{\to}x$.\end{proof}
\begin{cor}
[Opial property] \label{cor:Opial}Assume $(X,d)$ is uniformly $\infty$-convex
and $(x_{n})$ some bounded sequence with $x_{n}\overset{w}{\to}x$.
Then 
\[
\liminf d(x,x_{n})<\liminf d(y,x_{n})
\]
for all $y\in X\backslash\{x\}$. 
\end{cor}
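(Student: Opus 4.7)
I would argue by contradiction, combining the uniqueness of the asymptotic center with an application of uniform $\infty$-convexity at the midpoint of $x$ and $y$. Set $r:=\liminf_n d(x,x_n)$ and $s:=\liminf_n d(y,x_n)$, and suppose $s\le r$; both are finite since $(x_n)$ is bounded.

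If $s=0$, extract a subsequence $(x_{n_k})$ with $d(y,x_{n_k})\to 0$, so $x_{n_k}\to y$ strongly and $\omega(z,(x_{n_k}))=d(z,y)$, uniquely minimized at $z=y$. Thus the asymptotic center of $(x_{n_k})$ is $y\ne x$, contradicting $x_n\overset{w}{\to}x$.

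Otherwise $r\ge s>0$. Pick a subsequence along which $d(y,x_{n_k})\to s$ and further extract so that $d(x,x_{n_k})\to a$; since the liminf along a subsequence is at least the original liminf, $a\ge r\ge s>0$, and $a<\infty$ by boundedness. Let $m=m(x,y)$ and set $\epsilon:=d(x,y)/(2a)>0$. Because $\max\{d(x,x_{n_k}),d(y,x_{n_k})\}\to a$, the hypothesis $d(x,y)>\epsilon\max\{d(x,x_{n_k}),d(y,x_{n_k})\}$ of uniform $\infty$-convexity holds for all $k$ large, giving
\[
d(m,x_{n_k})\le(1-\rho_{\infty}(\epsilon))\max\{d(x,x_{n_k}),d(y,x_{n_k})\}.
\]
Passing to the limsup yields
\[
\omega(m,(x_{n_k}))\le(1-\rho_{\infty}(\epsilon))\,a<a=\omega(x,(x_{n_k})),
\]
contradicting the fact that, by the definition of $x_n\overset{w}{\to}x$, $x$ must be the asymptotic center of every subsequence of $(x_n)$.

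The only real subtlety is arranging $a$ to be bounded away from both $0$ and $\infty$ so that a valid modulus of uniform $\infty$-convexity can be invoked; this is ensured by the case split on whether $s$ (and hence $r$) vanishes and by the boundedness assumption on $(x_n)$. Everything else is a direct computation with the midpoint inequality and the minimality property of the asymptotic center.
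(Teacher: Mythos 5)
Your proof is correct. The paper states this corollary without proof, and your route is the intended one: everything reduces to the fact that $x$ must be the unique asymptotic center of every subsequence of $(x_n)$. Each step checks out: the extraction gives $\omega(y,(x_{n_k}))=s$ and $\omega(x,(x_{n_k}))=a\ge r\ge s$, the choice $\epsilon=d(x,y)/(2a)$ makes the hypothesis of uniform $\infty$-convexity hold for large $k$ because $\max\{d(x,x_{n_k}),d(y,x_{n_k})\}\to a<2a$, and the resulting bound $\omega(m,(x_{n_k}))\le(1-\rho_\infty(\epsilon))a<a$ contradicts minimality of $x$. One remark: the midpoint computation in your main case essentially re-derives, inline, the paper's lemma that $\omega(\cdot,(x_{n_k}))$ has a unique minimizer. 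You could shorten the argument by citing that lemma directly: since $x$ is the \emph{unique} minimizer of $\omega(\cdot,(x_{n_k}))$ and $y\ne x$, one gets $a=\omega(x,(x_{n_k}))<\omega(y,(x_{n_k}))=s$ immediately, hence $r\le a<s$; this version also absorbs your $s=0$ case. Either way the conclusion stands.
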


\section*{Barycenters in convex metric spaces}

\subsection*{Wasserstein space}

For $p\in[1,\infty)$ the $p$-Wasserstein space of a metric space
$(X,d)$ is defined as the set $\mathcal{P}_{p}(X)$ of all probability
measures $\mu\in\mathcal{P}(X)$ such that 
\[
\int d^{p}(x,x_{0})d\mu(x)
\]
for some fixed $x_{0}\in X$. Note that by triangle inequality this
definition is independent of $x_{0}$. We equip this set with the
following metric 
\[
w_{p}(\mu,\nu)=\left(\inf_{\pi\in\Pi(\mu,\nu)}\int d^{p}(x,y)d\pi(x,y)\right)^{\frac{1}{p}}
\]
where $\Pi(\mu,\nu)$ is the set of all coupling measures $\pi\in\mathcal{P}(X\times X)$
such that $\pi(A\times X)=\mu(A)$ and $\pi(X\times B)=\nu(B)$. It
is well-known \cite{Villani2009} that $(\mathcal{P}_{p}(X),w_{p})$
is a complete metric space if $(X,d)$ is complete and that it is
a geodesic space if $(X,d)$ is geodesic. Furthermore, by Hölder inequality
one easily sees that $w_{p}\le w_{p'}$ whenever $p\le p'$ so that
the limit 
\[
w_{\infty}(\mu,\nu)=\lim w_{p}(\mu,\nu)
\]
is well-defined and defines a metric on the space $\mathcal{P}_{\infty}(X)$
of probability measures with bounded support. An equivalent description
of $w_{\infty}$ can be given as follows (see \cite{Champion2008}):
For a measure $\pi\in\Pi(\mu,\nu)$ let $C(\pi)$ be the $\pi$-essiential
support of $d(\cdot,\cdot)$, i.e.
\[
C(\pi)={\pi-\esup}_{(x,y)\in X\times X}d(x,y).
\]
Then 
\[
w_{\infty}(\mu,\nu)=\inf_{\pi\in\Pi(\mu,\nu)}C(\pi).
\]

For a fixed point $y\in X$ the distance of $\mu$ to the delta measure
$\delta_{y}$ has the following form
\[
w_{p}^{p}(\mu,\delta_{y})=\int d^{p}(x,y)d\mu(x)
\]
and 
\[
w_{\infty}(\mu,\delta_{y})=\sup_{x\in\supp\mu}d(x,y)
\]
where $\supp\mu$ is the support of $\mu$.

\subsection*{Existence and uniqueness of barycenters}
\begin{lem}
\label{lem:measure-conv}Assume $(X,d)$ is $p$-convex then $y\mapsto\int d^{p}(x,y)d\mu(x)$
is convex for $p\in[1,\infty)$ whenever $\mu\in\mathcal{P}_{p}(X)$.
In case $p>1$ strict $p$-convexity even implies strict convexity.
Furthermore, if $\mu$ is not supported on a single geodesic then
$y\mapsto\int d(x,y)d\mu(x)$ is strictly convex if $(X,d)$ is strictly
$1$-convex.\end{lem}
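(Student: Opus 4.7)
The plan is to transfer the $p$-convexity inequality under the integral by applying it pointwise in $x$. For any $x$ and any midpoint $m = m(y_0, y_1)$, $p$-convexity together with monotonicity of $r \mapsto r^p$ yields
$d(m,x)^p \le \tfrac12 d(y_0,x)^p + \tfrac12 d(y_1,x)^p$,
and integrating against $\mu$ gives $F(m) \le \tfrac12 F(y_0) + \tfrac12 F(y_1)$ for $F(y) := \int d^p(x,y)\,d\mu(x)$. The triangle inequality together with $\mu \in \mathcal{P}_p(X)$ provides a locally integrable dominating function, so $F$ is continuous; midpoint convexity plus continuity then upgrades to convexity along every geodesic.

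For $p > 1$ with strict $p$-convexity, the pointwise inequality is strict for every $x$ whenever $y_0 \ne y_1$. The defect $\tfrac12 d(y_0,x)^p + \tfrac12 d(y_1,x)^p - d(m,x)^p$ is then a strictly positive continuous function, so its integral against the probability measure $\mu$ is positive. This yields strict midpoint convexity of $F$, which by continuity gives strict convexity along every geodesic.

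The case $p = 1$ is the delicate one: strict $1$-convexity only improves the inequality strictly at those $x$ with $d(y_0, y_1) > |d(y_0, x) - d(y_1, x)|$, i.e., off the ``collinearity set'' of points $x$ lying on a geodesic extension of $[y_0, y_1]$. The plan is to observe that this collinearity set is contained in a geodesic (line) through $y_0$ and $y_1$, so if $\mu$ is not supported on any single geodesic, this set has $\mu$-measure strictly less than $1$, and the pointwise strict inequality integrates to a strict improvement in the midpoint inequality for $F$. Continuity then upgrades this to strict convexity along geodesics.

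The main obstacle is making the last step precise in spaces without geodesic uniqueness, where the collinearity set is a union of geodesic extensions rather than a single line; one must read the hypothesis ``$\mu$ not supported on a single geodesic'' as excluding $\supp \mu$ from any maximal geodesic containing $[y_0, y_1]$, so that no such extension can carry the full measure. A secondary subtlety, shared with the $p > 1$ case, is that strict midpoint convexity must be verified on every subinterval of a geodesic and not merely at its midpoint, but this comes for free since the same pointwise argument applies to every pair $(\gamma(s_0), \gamma(s_1))$ along the geodesic in question.
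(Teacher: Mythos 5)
Your treatment of the first two claims is correct and is essentially the paper's own argument: apply the $p$-convexity inequality pointwise in $x$, integrate against $\mu$, and upgrade midpoint convexity to convexity along geodesics by continuity; for $p>1$ the everywhere-positive defect integrates to a positive quantity. The paper simply writes the pointwise inequality directly as $d^{p}(x,y_{t})\le(1-t)d^{p}(x,y_{0})+td^{p}(x,y_{1})$ and skips the midpoint bookkeeping, so on these parts you are on the same track and, if anything, more careful.

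The gap is in the $p=1$ case, and your ``main obstacle'' paragraph does not close it. You correctly observe that strict $1$-convexity gives nothing on the collinearity set $E=\{x:\, d(y_{0},y_{1})=|d(x,y_{0})-d(x,y_{1})|\}$ and that $E$ is a union of geodesic extensions of $[y_{0},y_{1}]$, but your proposed reading of the hypothesis --- that $\supp\mu$ lies in no single maximal geodesic through $[y_{0},y_{1}]$ --- does not imply $\mu(X\setminus E)>0$: since $E$ is a union of possibly many distinct extensions, $\mu$ can give full mass to $E$ while no single extension carries it. Concretely, let $X$ be the $\mathbb{R}$-tree consisting of a unit edge $[y_{0},y_{1}]$ with two further unit edges attached at $y_{0}$ (leaves $u_{1},u_{2}$) and two at $y_{1}$ (leaves $v_{1},v_{2}$), and let $\mu=\frac{1}{4}(\delta_{u_{1}}+\delta_{u_{2}}+\delta_{v_{1}}+\delta_{v_{2}})$. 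An $\mathbb{R}$-tree is strictly $1$-convex, and $\mu$ is not supported on any single geodesic, yet $t\mapsto d(x,y_{t})$ is affine for every $x\in\supp\mu$ and $y\mapsto\int d(x,y)\,d\mu(x)$ is constant on $[y_{0},y_{1}]$. So no argument along these lines can succeed without an extra hypothesis such as non-branching, under which the maximal extension of $[y_{0},y_{1}]$ is unique and your reading does become correct. For what it is worth, the paper's own proof of this part has the same defect --- it asserts strict inequality for every $x$ outside the segment $\{y_{t}\}_{t\in[0,1]}$, which already fails for $x$ on an extension --- and the example above appears to contradict the third claim of the lemma as literally stated.
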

\begin{rem*}
(1) It is easy to see that for a measure supported on a geodesic the
functional $y\mapsto\int d(x,y)d\mu(x)$ cannot be strictly convex
on that geodesic. 

(2) The same holds for the functional $F_{w}(y):=\int d^{p}(x,y)-d^{p}(x,w)d\mu(x)$
as defined in \cite{Kuwae2013}\end{rem*}
\begin{proof}
Let $y_{0},y_{1}\in X$ be two point in $X$ and $y_{t}$ be any geodesic
connecting $y_{0}$ and $y_{1}$. Then by $p$-convexity 
\[
d^{p}(x,y_{t})\le(1-t)d^{p}(x,y_{0})+td^{p}(x,y_{1})
\]
which implies convexity of the functional and similarly strict convexity
if $p>1$. 

If $\mu$ is not supported on a single geodesic then there is a subset
of positive $\mu$-measure disjoint from $\{y_{t}|t\in[0,1]\}$ such
that $d(x,y_{t})<(1-t)d(x,y_{0})+td(x,y_{1})$. In particular, $y\mapsto\int d(x,y)d\mu(x)$
is strictly convex.\end{proof}
\begin{lem}
Assume $(X,d)$ is uniformly $\infty$-convex with modulus $\rho$.
Let $\mu\in\mathcal{P}_{\infty}(X)$ then the function $F:y\mapsto w_{\infty}(\mu,\delta_{y})$
is uniformly quasi-convex, i.e. whenever $d(y_{0},y_{1})>\epsilon\max\{F(y_{0}),F(y_{1})\}$
for some $\epsilon>0$ then 
\[
F(y_{\frac{1}{2}})\le(1-\rho(\epsilon))\max\{F(y_{0}),F(y_{1})\}.
\]
\end{lem}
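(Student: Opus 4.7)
The plan is to reduce the statement to a pointwise application of uniform $\infty$-convexity, exploiting the formula
\[
F(y)=w_\infty(\mu,\delta_y)=\sup_{x\in\supp\mu}d(x,y)
\]
noted earlier in the excerpt. That is, I would show the target inequality holds uniformly in $x\in\supp\mu$ and then pass to the supremum.

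Concretely, set $M=\max\{F(y_0),F(y_1)\}$. First I would observe that the hypothesis $d(y_0,y_1)>\epsilon M$ forces $M>0$ and $y_0\ne y_1$; otherwise $\mu$ would be concentrated at a single point equal to both $y_0$ and $y_1$, making $d(y_0,y_1)=0$ and contradicting the hypothesis. Fix $x\in\supp\mu$ and set
\[
M_x=\max\{d(x,y_0),d(x,y_1)\}=\mathcal{M}^\infty(d(x,y_0),d(x,y_1)).
\]
Since $d(x,y_i)\le F(y_i)\le M$, we have $M_x\le M$, so
\[
d(y_0,y_1)>\epsilon M\ge \epsilon M_x.
\]
This is precisely the hypothesis of uniform $\infty$-convexity for the triple $(y_0,y_1,x)$, yielding
\[
d(y_{1/2},x)\le (1-\rho(\epsilon))M_x\le (1-\rho(\epsilon))M.
\]
Taking $\sup$ over $x\in\supp\mu$ and using the formula for $F$ above gives $F(y_{1/2})\le (1-\rho(\epsilon))M$, which is the claim.

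The only subtlety to watch is the degenerate case $M_x=0$, where uniform $\infty$-convexity cannot be invoked; but $M_x=0$ forces $x=y_0=y_1$, contradicting $y_0\ne y_1$ established above. I do not expect any real obstacle: the proof is essentially the observation that the $\infty$-mean appearing in the definition of uniform $\infty$-convexity is exactly the quantity controlling $w_\infty(\mu,\delta_\cdot)$, so uniform quasi-convexity of $F$ inherits the modulus $\rho$ directly from the space.
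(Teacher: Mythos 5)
Your proposal is correct and is essentially the paper's own argument: both use the formula $F(y)=\sup_{x\in\supp\mu}d(x,y)$, note that $\max\{d(x,y_0),d(x,y_1)\}\le\max\{F(y_0),F(y_1)\}$ so the hypothesis of uniform $\infty$-convexity is met for each triple $(y_0,y_1,x)$, apply it pointwise, and pass to the supremum (the paper does this via a maximizing sequence $x_n$, which is equivalent). Your extra care with the degenerate cases $M=0$ and $M_x=0$ is a minor tidying of details the paper leaves implicit.
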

\begin{rem*}
In contrast to the cases $1<p<\infty$ strict $\infty$-convexity
is not enough.\end{rem*}
\begin{proof}
Note that $F$ has the following equivalent form
\[
F(y)=\sup_{x\in\supp\mu}d(x,y).
\]
Take any $y_{0},y_{1}\in X$ with $d(y_{0},y_{1})>\epsilon\max\{F(y_{0}),F(y_{1})\}$.
Let $x_{n}$ be a sequence such that $F(y_{\frac{1}{2}})=\lim_{n\to\infty}d(x_{n},y_{\frac{1}{2}})$.
By uniform $\infty$-convexity we have 
\begin{eqnarray*}
\lim_{n\to\infty}d(x_{n},y_{\frac{1}{2}}) & \le & (1-\rho(\epsilon))\max\{d(x_{n},y_{0}),d(x_{n},y_{1})\}\\
 & \le & (1-\rho(\epsilon))\max\{F(y_{0}),F(y_{1})\}.
\end{eqnarray*}

\end{proof}
The following was defined in \cite{Kuwae2013}.
\begin{defn}
[$p$-barycenter] For $p\in[1,\infty]$ the $p$-barycenter of a measure
$\mu\in\mathcal{P}_{p}(X)$ is defined as the point $y\in X$ such
that $w_{p}(\mu,\delta_{y})$ is minimal. If $p<\infty$ and $\mu$
has only $(p-1)$-moments, i.e. $\int d^{p-1}(x,y)d\mu(x)<\infty$,
then the $p$-barycenter can be defined as the minimizer of the functional
$F_{w}(y)$ above. If the $p$-barycenter is unique we denote it by
$b_{p}(\mu)$.\end{defn}
\begin{rem*}
(1) This functional $F_{w}(y)$ is well-defined since 
\[
|F_{w}(y)|\le pd(y,w)\int(d(x,y)+d(x,w))^{p-1}d\mu(x).
\]
 Furthermore, $F_{w}(y)-F_{w'}(y)$ is constant and thus the minimizer(s)
are independent of $w\in X$.

(2) The $\infty$-barycenters are also called circumcenter. In case
$\mu$ consists of three points it was recently used in \cite{Bacak2014}
to define a new curvature condition. From the section above, the $\infty$-barycenter
only depends on the support of the measure $\mu$. Hence the $\infty$-barycenter
of any bounded set $A$ can be defined as 
\[
b_{\infty}(A)=\argmin_{y\in X}\sup_{x\in A}d(x,y).
\]
The proofs below work without any change.\end{rem*}
\begin{thm}
On any $p$-convex, reflexive metric space $(X,d)$ every measure
$p$-moment has $p$-barycenter.\end{thm}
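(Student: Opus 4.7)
The plan is to realize the $p$-barycenter as a point in the intersection of a decreasing family of closed, bounded, convex sublevel sets of the energy functional and invoke reflexivity.

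Fix $\mu$ with $p$-moment and consider the functional $F(y):=w_p^p(\mu,\delta_y)=\int d^p(x,y)\,d\mu(x)$ for $p\in[1,\infty)$ (or $F(y)=w_\infty(\mu,\delta_y)=\sup_{x\in\supp\mu}d(x,y)$ for $p=\infty$; in the $(p-1)$-moment case use $F_w$ from the remark after the definition of $p$-barycenter, which differs from $F$ by a finite constant and has the same minimizers). By Lemma \ref{lem:measure-conv} (or directly because balls are convex when $p=\infty$ and $X$ is $\infty$-convex), $F$ is convex; hence every sublevel set $L_\alpha:=\{y\in X\mid F(y)\le\alpha\}$ is convex. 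The triangle inequality gives $|F(y)-F(y')|\le C\,d(y,y')$ (or an analogous Lipschitz/continuity estimate using dominated convergence when $p<\infty$), so $F$ is continuous, and in particular each $L_\alpha$ is closed. Finally, the trivial bound $F(y)\ge(d(y,x_0)-R)^p$ for any $x_0\in\supp\mu$ and large $d(y,x_0)$ (or the analogous bound for $F_w$ and for $p=\infty$) shows that $L_\alpha$ is bounded whenever it is non-empty.

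Now set $\alpha_0:=\inf_{y\in X}F(y)\in[0,\infty)$ and consider the decreasing family
\[
C_n:=L_{\alpha_0+1/n},\qquad n\in\mathbb{N}.
\]
Each $C_n$ is non-empty (by the definition of infimum), closed, bounded, and convex, and $C_n\supset C_{n+1}$. Reflexivity of $(X,d)$ then yields $\bigcap_{n\in\mathbb{N}}C_n\ne\varnothing$; any $y^{*}$ in this intersection satisfies $F(y^{*})\le\alpha_0+1/n$ for every $n$, hence $F(y^{*})=\alpha_0$, so $y^{*}$ is a $p$-barycenter of $\mu$.

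The only mildly technical point is verifying that $L_\alpha$ is bounded and closed in the $(p-1)$-moment version using $F_w$ (where one must control $\int d^p(x,y)-d^p(x,w)\,d\mu$ by a quantity of order $d(y,w)\cdot(d(y,w)+\text{const})^{p-1}$ via the mean value estimate already written in the remark after the definition of $p$-barycenter) and in the $p=\infty$ case (where $L_\alpha=\bigcap_{x\in\supp\mu}\overline{B}_\alpha(x)$ is automatically a closed convex bounded set). Once these routine checks are in place, the reflexivity argument closes the proof uniformly for all $p\in[1,\infty]$; note that uniqueness is not claimed here, and indeed uniqueness in the strict range $p>1$ would follow from strict convexity of $F$ via Lemma \ref{lem:measure-conv}, but it plays no role in the existence statement.
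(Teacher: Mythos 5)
Your proof is correct and follows essentially the same route as the paper: both realize the barycenter as a point in the intersection of the decreasing family of sublevel sets of $y\mapsto w_p(\mu,\delta_y)$, check these are non-empty, closed, bounded (via the reverse triangle inequality in Wasserstein space) and convex (via Lemma \ref{lem:measure-conv}, resp. convexity of balls for $p=\infty$), and invoke reflexivity. The only differences are cosmetic (indexing the family by $\alpha_0+1/n$ rather than by all $r>m_\mu$, and phrasing the coercivity bound pointwise rather than as $d(y_0,y)\le w_p(\mu,\delta_{y_0})+w_p(\mu,\delta_y)$).
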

\begin{proof}
Define
\[
A_{r}^{p}=\{y\in X\,|\, w_{p}(\mu,\delta_{y})\le r\},
\]
which is a closed convex subset of $X$ which is non-empty for $r>m_{\mu}^{p}=\inf_{y\in X}w_{p}(\mu,\delta_{y})$.
If it is bounded then by reflexivity 
\[
A_{m_{\mu}}=\bigcap_{r>m_{\mu}}A_{r}\ne\varnothing.
\]
In this case minimality implies $w_{\infty}(\mu,\delta_{y})=m_{\mu}$
for all $y\in A_{m_{\mu}}$.

In case $p=\infty$ note that $y\mapsto w_{\infty}(\mu,\delta_{y})=\sup_{x\in\supp\mu}d(x,y)$
is finite iff $\mu$ has bounded support in which case $A_{r}$ is
bounded as well.

The cases $p\in(1,\infty)$ where proven in \cite[Proposition 3.1]{Kuwae2013},
the assumption on properness can be dropped using reflexivity. For
convenience we include the short proof: If $\mu\in\mathcal{P}_{p}(X)$
then $w_{p}(\mu,\delta_{y_{0}})\le R$. Now take any $y\in X$ and
assume $w_{p}(\mu,\delta_{y})\le r$. Since $(X,d)$ is isometrically
embedded into $(\mathcal{P}_{p}(X),w_{p})$ by the map $y\mapsto\delta_{y}$
we have
\begin{eqnarray*}
d(y_{0},y)=w_{p}(\delta_{y_{0}},\delta_{y}) & \le & w_{p}(\mu,\delta_{y_{0}})+w_{p}(\mu,\delta_{y})\\
 & \le & R+r,
\end{eqnarray*}
i.e. $y\in B_{R+r}(y_{0})$ which implies $A_{r}$ is bounded. Using
a similar argument one can also show that $A_{r}$ is bounded if $\mu$
is only in $\mathcal{P}_{p-1}(X)$.\end{proof}
\begin{cor}
Let $p\in[1,\infty]$ and $(X,d)$ be a strictly $p$-convex if $p\in[1,\infty)$
and uniformly $\infty$-convex if $p=\infty$. Then $p$-barycenters
are unique for $p>1$. In case $p=1$, all measure admitting $1$-barycenters
which are not supported on a single geodesic have a unique $1$-barycenter.
\end{cor}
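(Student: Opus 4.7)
The plan is to prove uniqueness by contradiction, splitting into the cases $p\in(1,\infty)$, $p=1$, and $p=\infty$, and in each case invoking an appropriate strict-convexity-type property of the functional $y\mapsto w_p(\mu,\delta_y)^p$ (or its variant $F_w$) that was already established above. Existence is handled by the preceding theorem, so we only need uniqueness.

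For $p\in(1,\infty)$, suppose $y_0\ne y_1$ are both $p$-barycenters of $\mu$. Apply Lemma \ref{lem:measure-conv}: strict $p$-convexity implies that $y\mapsto\int d^p(x,y)\,d\mu(x)$ is \emph{strictly} convex. Consequently, for the midpoint $y_{1/2}=m(y_0,y_1)$ the value is strictly less than $\frac{1}{2}\int d^p(x,y_0)\,d\mu(x)+\frac{1}{2}\int d^p(x,y_1)\,d\mu(x)$, which equals the common minimum; this contradicts minimality. If $\mu$ has only a $(p-1)$-moment, replace the functional by $F_w$; since $F_w(y)-F_{w'}(y)$ is constant in $y$, strict convexity is inherited and the same argument applies.

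For $p=1$, if $\mu$ is not supported on a single geodesic, Lemma \ref{lem:measure-conv} again gives that $y\mapsto\int d(x,y)\,d\mu(x)$ is strictly convex under strict $1$-convexity of $(X,d)$, and the midpoint argument is identical. The hypothesis excluding measures on a single geodesic is necessary, as the accompanying remark notes that $y\mapsto\int d(x,y)\,d\mu(x)$ can never be strictly convex along a geodesic carrying $\mu$.

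For $p=\infty$, suppose $y_0\ne y_1$ both minimize $F(y):=w_\infty(\mu,\delta_y)$, with common minimum $m_\mu$. If $m_\mu=0$ then $\mu=\delta_{y_0}=\delta_{y_1}$, forcing $y_0=y_1$, so we may assume $m_\mu>0$. Then $d(y_0,y_1)>0$, so there exists $\epsilon>0$ with $d(y_0,y_1)>\epsilon\max\{F(y_0),F(y_1)\}=\epsilon\, m_\mu$. The preceding lemma on uniform quasi-convexity of $F$ under uniform $\infty$-convexity yields
\[
F(y_{1/2})\le(1-\rho(\epsilon))\,m_\mu<m_\mu,
\]
contradicting minimality. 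The main subtlety in this case is that, as the remark before the lemma points out, mere strict $\infty$-convexity would not suffice; this is exactly why the hypothesis is strengthened to uniform $\infty$-convexity when $p=\infty$. The remaining steps are routine applications of the lemmas from the previous subsections.
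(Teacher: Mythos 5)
Your proposal is correct and follows exactly the route the paper intends: the corollary is stated without proof as an immediate consequence of Lemma \ref{lem:measure-conv} (strict convexity of $y\mapsto\int d^p(x,y)\,d\mu(x)$ for $p>1$, and for $p=1$ off a single geodesic) together with the uniform quasi-convexity lemma for $w_\infty(\mu,\delta_\cdot)$, each combined with the standard midpoint argument against two distinct minimizers. Your handling of the $m_\mu=0$ case and the remark on why uniform (rather than strict) $\infty$-convexity is needed are both consistent with the paper.
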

The $p$-product of finitely many metric spaces $\{(X_{i},d_{i})\}_{i=1}^{n}$
is defined the metric space $(X,d)$ with $X=\times_{i=1}^{n}X_{i}$
and 
\[
d(x,y)=\left(\sum_{i=1}^{n}d_{i}^{p}(x_{i},y_{i})\right)^{\frac{1}{p}}.
\]
A minor extension of \cite[Theorem 1]{Foertsch2004} shows that for
$p\in(1,\infty)$ the space $(X,d)$ is strictly $p$-convex if all
$(X_{i},d_{i})$ are if $1<p<\infty$ and projections onto the factors
of geodesic in $(X,d)$ are geodesics in $(X_{i},d_{i})$.
\begin{thm}
Let $\{(X_{i},d_{i})\}_{i=1}^{n}$ be finitely many strictly $p$-convex
reflexive metric spaces and $(X,d)$ be the $p$-product of $\{(X_{i},d_{i})\}_{i=1}^{n}$.
If $\mu\in\mathcal{P}_{p}(X)$ then $b_{p}(\mu)=(b_{p}(\mu_{i}))$
where $\mu_{i}$ are the marginals of $\mu$. \end{thm}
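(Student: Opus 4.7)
\emph{Plan.} The strategy is to exploit that, in a $p$-product, the $p$-barycenter functional decouples across coordinates. First I would verify that each marginal $\mu_i$ lies in $\mathcal{P}_p(X_i)$: fix any basepoint $z = (z_1,\ldots,z_n)\in X$; the definition of the $p$-product metric gives $d_i^p(x_i,z_i) \le d^p(x,z)$, so integrating against $\mu$ yields $\int d_i^p(x_i,z_i)\,d\mu_i(x_i)<\infty$. Then, invoking the existence theorem for $p$-barycenters together with Lemma \ref{lem:measure-conv} and strict $p$-convexity, each factor $X_i$ admits a unique $p$-barycenter $b_p(\mu_i)\in X_i$.

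Next comes the central calculation. For any $y=(y_1,\ldots,y_n)\in X$,
\[
\int_X d^p(x,y)\,d\mu(x) \;=\; \int_X \sum_{i=1}^n d_i^p(x_i,y_i)\,d\mu(x) \;=\; \sum_{i=1}^n \int_{X_i} d_i^p(x_i,y_i)\,d\mu_i(x_i),
\]
where the first equality uses the definition of the $p$-product and the second the marginal property. Writing $F_i(y_i)$ for the $i$-th summand, the global functional $F(y)=\sum_i F_i(y_i)$ is a sum of $n$ functions each depending on a single coordinate. Consequently $\inf_{y\in X} F(y)=\sum_i \inf_{y_i\in X_i} F_i(y_i)$, and a vector $y=(y_i)$ attains the global infimum if and only if each $y_i$ attains the infimum of $F_i$. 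Combined with the uniqueness from the first step, this forces both the existence of $b_p(\mu)$ and the identity $b_p(\mu)=(b_p(\mu_i))$.

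There is no real obstacle; the argument is a separation-of-variables observation wrapped around the existence and uniqueness machinery established earlier. The only subtlety concerns the case $p=1$, where strict $1$-convexity only guarantees uniqueness of $b_1(\mu_i)$ when $\mu_i$ is not supported on a single geodesic. In that case one should either add this hypothesis to every marginal or reinterpret the claim as an equality between the set of $1$-barycenters of $\mu$ and the product of the sets of $1$-barycenters of the $\mu_i$; the same separation argument delivers both readings.
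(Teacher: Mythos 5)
Your proof is correct and follows essentially the same route as the paper: both rest on the decomposition $w_p^p(\mu,\delta_y)=\sum_{i=1}^n w_p^p(\mu_i,\delta_{y_i})$ and the observation that a sum of functions of independent coordinates is minimized coordinate-wise, with existence and uniqueness supplied by the earlier reflexivity and strict $p$-convexity results. Your closing caveat about $p=1$ (uniqueness of $b_1(\mu_i)$ failing for marginals supported on a single geodesic) is a legitimate point that the paper's own proof glosses over.
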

\begin{proof}
If is not difficult to see that 
\[
w_{p}^{p}(\mu,\delta_{y})=\sum_{i=1}^{n}w_{p}^{p}(\mu_{i},\delta_{y_{i}}).
\]
Thus by existence for the factors we know 
\[
\inf_{y\in X}w_{p}(\mu,\delta_{y})\le\sum_{i=1}^{n}w_{p}^{p}(\mu_{i},\delta_{b_{p}(\mu_{i})})=w_{p}^{p}(\mu,(b_{p}(\mu_{i}))).
\]
Conversely, suppose there is a $y$ such that $w_{p}(\mu,\delta_{y})\le w_{p}(\mu,b_{p}(\mu))$.
Since it holds $w_{p}(\mu_{i},\delta_{y_{i}})\ge w_{p}(\mu_{i},b_{p}(\mu_{i}))$
we see that $y$ is a minimizer of $y\mapsto w_{p}(\mu,\delta_{y})$.
Since $(X,d)$ is strictly $p$-convex, $y=b_{p}(\mu)$.
\end{proof}

\subsection*{Jensen's inequality}

The classical Jensen's inequality states that on a Hilbert space $H$
for any measure $\mu\in\mathcal{P}_{1}(H)$ and any convex lower semicontinuous
function $\varphi\in L^{1}(H,\mu)$ it holds
\[
\varphi\left(\int xd\mu(x)\right)\le\int\varphi(x)d\mu(x).
\]
With the help of barycenters Jensen's inequality can be stated as
follows.
\begin{defn}
[Jensen's inequality] A metric space $(X,d)$ is said to admit the
$p$-Jensen's inequality if for all measure $\mu$ admitting a (unique)
barycenter $b_{p}(\mu)$ and for every lower semicontinuous function
$\varphi\in L^{p-1}(X,\mu)$ it holds
\[
\varphi(b_{p}(\mu))\le b_{p}(\varphi_{*}\mu)
\]
where $\varphi_{*}\mu\in\mathcal{P}(\mathbb{R})$ is the push-forward
of $\mu$ via $\varphi$.

For $p=2$ this boils down to 
\[
b_{2}(\varphi_{*}\mu)=\int\varphi d\mu.
\]

\end{defn}
Using the existence proofs above one can adapt Kuwae's proof of \cite[Theorem 4.1]{Kuwae2013}
to show that Jensen's inequality holds spaces satisfying the condition
$(\mathbf{B})$, i.e. for any two geodesics $\gamma,\eta$ with $\{p_{0}\}=\gamma\cap\eta$
and $\pi_{\gamma}(y)=p_{0}$ for $y\in\eta\backslash\{p_{0}\}$ it
holds $\pi_{\eta}(x)=p_{0}$ for all $x\in\eta$. Kuwae states this
as $\eta\bot_{p_{0}}\gamma$ implies $\gamma\bot_{p_{0}}\eta$. 

Using Busemann's non-positive curvature condition one can then show
that if each measure in $\mathcal{P}_{1}(X)$ admits unique barycenters
and Jensen's inequality holds on the product space then a Wasserstein
contraction holds, i.e. 
\[
d(b_{2}(\mu),b_{2}(\nu))\le w_{1}(\mu,\nu).
\]

If instead the $p$-Busemann holds one still gets the following:
\begin{prop}
\label{prop:Jensen-p-Busemann}Let $(X,d)$ be $p$-Busemann for some
$p\in[1,\infty)$. If the $2$-Jensen's inequality for holds on the
$2$-product $X\times X$ then 
\[
d(b_{2}(\mu),b_{2}(\nu))\le w_{p}(\mu,\nu).
\]
\end{prop}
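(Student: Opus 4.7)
The plan is to apply the $2$-Jensen's inequality on the $2$-product $X\times X$ to the function $\varphi(x,y)=d(x,y)^p$, evaluated against a $w_p$-optimal coupling of $\mu$ and $\nu$.

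First I fix an optimal coupling $\pi\in\Pi(\mu,\nu)$ realizing the $p$-Wasserstein distance, so that $\int d(x,y)^p\,d\pi(x,y)=w_p(\mu,\nu)^p$. Applying the product-barycenter theorem proved earlier to the $2$-product of $X$ with itself, the $2$-barycenter of any measure on $X\times X$ splits across its marginals, giving $b_2(\pi)=(b_2(\mu),b_2(\nu))$.

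The crucial input is that $p$-Busemann makes $\varphi=d(\cdot,\cdot)^p$ midpoint convex on the $2$-product. Indeed, a midpoint of $(x_0,y_0)$ and $(x_1,y_1)$ in $X\times X$ is $(m(x_0,x_1),m(y_0,y_1))$, and the $p$-th power form of $p$-Busemann reads precisely
\[
d(m(x_0,x_1),m(y_0,y_1))^p\le\frac{1}{2} d(x_0,y_0)^p+\frac{1}{2} d(x_1,y_1)^p,
\]
which is exactly midpoint convexity of $\varphi$; continuity of $d$ upgrades this to convexity along all geodesics, and lower semicontinuity is automatic.

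Applying the $2$-Jensen's hypothesis to $\varphi$ (which lies in $L^1(\pi)$ since $\mu,\nu\in\mathcal{P}_p(X)$ and $d(x,y)^p\lesssim d(x,x_0)^p+d(y,x_0)^p$) and using that the $2$-barycenter on $\mathbb{R}$ is just the mean,
\[
d(b_2(\mu),b_2(\nu))^p=\varphi(b_2(\pi))\le b_2(\varphi_*\pi)=\int\varphi\,d\pi=w_p(\mu,\nu)^p.
\]
Taking $p$-th roots gives the claim. The main obstacle is the middle step: one has to recognize that the quadruple form of $p$-Busemann coincides with midpoint convexity of $d^p$ on the $2$-product, and one must confirm that the implicit hypotheses (existence of $b_2(\pi)$ on $X\times X$ and the applicability of the product-barycenter theorem) propagate from $X$ to $X\times X$ under the standing assumptions; once these are in place the inequality is essentially a one-line application of Jensen combined with optimality of $\pi$.
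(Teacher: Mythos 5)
Your argument is essentially the paper's own proof: the paper likewise notes that $p$-Busemann makes $(x,y)\mapsto d(x,y)^{p}$ convex on the $2$-product and applies the $2$-Jensen hypothesis to a coupling $\pi\in\Pi(\mu,\nu)$, implicitly using the splitting $b_{2}(\pi)=(b_{2}(\mu),b_{2}(\nu))$ from the product-barycenter theorem. The only cosmetic difference is that you fix an optimal coupling whereas the paper derives the inequality for every $\pi$ and then infimizes (which sidesteps attainment of the infimum); the implicit-hypothesis issues you flag are present in the paper's proof as well.
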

\begin{proof}
Since $(x,y)\mapsto d(x,y)^{p}$ is convex on $X\times X$ we have
by Jensen's inequality
\[
d(b_{2}(\mu),b_{2}(\nu))^{p}\le\int d^{p}(x,y)d\pi(x,y)
\]
for any $\pi\in\Pi(\mu,\nu)$. Hence $d(b_{2}(\mu),b_{2}(\nu))\le w_{p}(\mu,\nu)$.
\end{proof}

\section*{Banach-Saks}

The classical Banach-Saks property for Banach spaces is stated as
follows: Any bounded sequence has a subsequence $(x_{m_{n}})$ such
that sequence of Ces\`aro means
\[
\frac{1}{N}\sum_{n=1}^{N}x_{m_{n}}
\]
converges strongly. In a general metric space there is no addition
of two elements defined. Furthermore, convex combinations do not commute
(are not associative), i.e. if $(1-\lambda)x\oplus\lambda y$ denotes
the point $x_{\lambda}$ on the geodesic connecting $x$ and $y$
then in general 
\[
\frac{2}{3}\left(\frac{1}{2}x\oplus\frac{1}{2}y\right)\oplus\frac{1}{3}z\ne\frac{1}{3}x\oplus\frac{2}{3}\left(\frac{1}{2}y\oplus\frac{1}{2}z\right),
\]
so that $\frac{1}{N}\bigoplus_{n=1}^{N}x_{n}$ does not make sense. 

For Hilbert spaces the point $\frac{1}{N}\bigoplus_{n=1}^{N}x_{n}$
agrees with the $2$-barycenter of the measure 
\[
\mu_{N}=\frac{1}{N}\sum_{n=1}^{N}\delta_{x_{n}}.
\]
Since this is well-defined on general metric spaces the Banach-Saks
property can be formulated as follows.
\begin{defn}
[$p$-Banach-Saks] Let $p\in[1,\infty]$ and suppose for any sequence
$(x_{n})$ in a metric space $(X,d)$ the measures $\mu_{n}$ admit
a unique $p$-barycenter. Then $(X,d)$ is said to satisfy the $p$-Banach-Saks
property if every sequence $(x_{n})$ there is a subsequence $(x_{m_{n}})$
such that the sequence of $p$-barycenters of the measures $\tilde{\mu}_{N}=\frac{1}{N}\sum_{n=1}^{N}\delta_{x_{m_{n}}}$
converges strongly.
\end{defn}
Since Hilbert spaces satisfy the (traditional) Banach-Saks property
they also satisfy the $p$-Banach-Saks property. Yokota managed in
\cite[Theorem C]{Yokota} (see also \cite[Theorem 3.1.5]{Bacak2014a})
to show that any $CAT(1)$-domain with small radius, in particular
any $CAT(0)$-space, satisfy the $2$-Banach-Saks property and if
$x_{n}\overset{w}{\to}x$ then $b_{p}(\tilde{\mu}_{N})\to x$ where
$\tilde{\mu}_{N}$ is defined above. We will adjust his proof to show
that for $p\in(1,\infty)$ any uniformly $p$-convex space satisfies
the $p$-Banach-Saks property and the limit of the chosen subsequence
agrees with the weak sequential limit. Since a $CAT(1)$-domain with
small radius is uniformly $p$-convex, our result generalizes Yokota's
when restricted to that convex subset.

We leave the proof of the following statement to the reader.
\begin{lem}
If $(X,d)$ is uniformly $p$-convex and $x_{n}\to x$ then $b_{p}(\mu_{N})\to x$
where $\mu_{N}=\frac{1}{N}\sum_{n=1}^{N}\delta_{x_{n}}$.
\end{lem}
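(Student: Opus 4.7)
The plan is to exploit the minimality of $y_N := b_p(\mu_N)$ tested against the point $x$ itself. Since $x_n \to x$, the numerical sequence $d(x_n, x)^p$ is null in $[0,\infty)$, and its Ces\`aro means therefore satisfy
\[
F_N(x) := \int d^p(z, x)\,d\mu_N(z) = \frac{1}{N}\sum_{n=1}^{N} d(x_n, x)^p \longrightarrow 0.
\]
By definition of $y_N$ as a minimizer of $F_N$, we have $F_N(y_N) \le F_N(x)$, and hence $F_N(y_N) \to 0$ as well. The only remaining task is to transfer this averaged smallness into control on the single quantity $d(x, y_N)$.

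For that step I would apply the elementary power-mean inequality $(\alpha + \beta)^p \le 2^{p-1}(\alpha^p + \beta^p)$, valid for all $p \ge 1$ by convexity of $t \mapsto t^p$. Composed with the triangle inequality $d(x, y_N) \le d(x, x_n) + d(x_n, y_N)$, it gives, for each $n \le N$,
\[
d(x, y_N)^p \le 2^{p-1}\bigl(d(x, x_n)^p + d(x_n, y_N)^p\bigr).
\]
Averaging this pointwise bound over $n = 1, \dots, N$ produces
\[
d(x, y_N)^p \le 2^{p-1}\bigl(F_N(x) + F_N(y_N)\bigr) \le 2^p F_N(x),
\]
and the right-hand side tends to $0$ by the first step. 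Hence $y_N \to x$ strongly, as required.

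Two remarks on scope. First, $p$ should be taken finite here: the same argument breaks for $p = \infty$, since $w_\infty(\mu_N, \delta_x) = \max_{n \le N} d(x_n, x)$ is pinned by the first (possibly far) terms of the sequence and need not go to $0$ — for instance $x_1 = 0$, $x_n = 1$ for $n \ge 2$ in $\mathbb{R}$ gives $b_\infty(\mu_N) = \tfrac{1}{2} \not\to 1$. Second, uniform $p$-convexity enters only to guarantee that the $p$-barycenters $b_p(\mu_N)$ exist and are unique (via strict $p$-convexity together with boundedness of the sublevel sets of $F_N$); the convergence argument itself makes no quantitative use of the modulus $\rho_p(\epsilon)$ and thus contains no genuine obstacle, which is consistent with the author's decision to leave the proof to the reader.
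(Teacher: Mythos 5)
Your argument is correct, and since the paper explicitly leaves the proof of this lemma to the reader there is no authorial proof to compare it against. The two-step structure is exactly what the omission invites: Ces\`aro means of the null sequence $d(x_n,x)^p$ give $F_N(x)\to 0$, minimality gives $F_N(y_N)\le F_N(x)$, and the averaged triangle--power-mean inequality $d(x,y_N)^p\le 2^{p-1}\bigl(F_N(x)+F_N(y_N)\bigr)$ converts the two averaged quantities into control of the single distance $d(x,y_N)$. All steps check out, including the identification of $w_p(\mu_N,\delta_y)^p$ with $F_N(y)$, and the bound degenerates gracefully to the plain triangle inequality when $p=1$. Your two scope remarks are also well taken: the counterexample $x_1=0$, $x_n=1$ for $n\ge 2$ in $\mathbb{R}$ really does show the statement fails for $p=\infty$ (the circumcenter of the support is pinned at $\tfrac12$), which is consistent with the surrounding text restricting the Banach--Saks theorem to $p\in(1,\infty)$; and uniform $p$-convexity is indeed used only to secure existence (via reflexivity) and uniqueness of $b_p(\mu_N)$, not in the convergence estimate itself --- in fact your argument applies to any choice of minimizer, so even uniqueness is not essential to the conclusion.
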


\begin{lem}
\label{lem:unif-conv}Assume $(X,d)$ be a metric space admitting
midpoints and let $f:X\to\mathbb{R}$ be a uniformly convex function
with modulus $\omega$. If $f$ attains its minimum at $x_{m}\in X$
then 
\[
f(x)\ge f(x_{\min})+\frac{1}{2}\omega(d(x,x_{\min})).
\]
\end{lem}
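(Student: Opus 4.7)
The plan is a direct one-step argument using only the definition of uniform convexity and the minimality of $x_{\min}$. Given $x \in X$, I would pick any midpoint $m = m(x, x_{\min})$, which exists since $(X,d)$ admits midpoints. Applying the uniform convexity inequality (from the remark earlier in the paper) to the pair $(x, x_{\min})$ at this midpoint yields
\[
f(m) \le \frac{1}{2} f(x) + \frac{1}{2} f(x_{\min}) - \omega(d(x, x_{\min})).
\]

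Next, since $x_{\min}$ is a minimizer of $f$, we have $f(m) \ge f(x_{\min})$. Substituting this lower bound on the left and rearranging gives
\[
\frac{1}{2} f(x_{\min}) \le \frac{1}{2} f(x) - \omega(d(x, x_{\min})),
\]
i.e.\ $f(x) \ge f(x_{\min}) + 2\omega(d(x, x_{\min}))$, which in particular implies the claimed inequality $f(x) \ge f(x_{\min}) + \tfrac{1}{2}\omega(d(x, x_{\min}))$ (the stated bound is deliberately weaker, presumably to allow flexibility in the modulus).

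There is no real obstacle here: the argument uses only the definitional inequality and the fact that $f(m)\ge f(x_{\min})$. The only mild subtlety is that midpoints need not be unique in the ambient metric space, but the inequality holds for \emph{every} midpoint, so we may simply pick one and the estimate depends only on $d(x, x_{\min})$, which is intrinsic. No convexity of the underlying space is needed beyond the existence of midpoints assumed in the hypothesis.
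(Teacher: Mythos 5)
Your argument is exactly the paper's: apply the midpoint inequality for the pair $x$, $x_{\min}$ and use $f(m)\ge f(x_{\min})$ to rearrange. The only difference is the normalization of the modulus --- the paper works with the convention $f(y_{\frac{1}{2}})\le\frac{1}{2}f(y_{0})+\frac{1}{2}f(y_{1})-\frac{1}{4}\omega(d(y_{0},y_{1}))$ (the midpoint case of the $t(1-t)\omega$ form actually established for $\Var_{\mu_{I},p}$ in the Banach--Saks proof), which produces the stated constant $\frac{1}{2}$ where your convention produces $2$; under either normalization the claimed inequality follows.
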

\begin{proof}
Let $x\in X$ be arbitrary and $m$ be the midpoint of $x$ and $x_{m}$.
Then 
\[
f(x_{\min})\le f(m)\le\frac{1}{2}f(x_{\min})+\frac{1}{2}f(x)-\frac{1}{4}\omega(d(x,x_{\min})).
\]
\end{proof}
\begin{thm}
\label{thm:Banach-Saks}Let $(X,d)$ be a uniformly $p$-convex metric
space. If $x_{n}\overset{w}{\to}x$ then there is a subsequence $(x_{m_{n}})$
such that $b_{p}(\tilde{\mu}_{N})\to x$ where $\tilde{\mu}_{N}=\frac{1}{N}\sum_{n=1}^{N}\delta_{x_{m_{n}}}$.
In particular, $(X,d)$ satisfies the $p$-Banach-Saks property.\end{thm}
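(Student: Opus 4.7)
The plan is to adjust Yokota's argument in the general uniformly $p$-convex setting, leveraging uniform $p$-convexity at the level of the averaged energy functional to reduce everything to a Cesàro-type estimate controlled by the Opial property.

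First, pass to a subsequence so $d(x_{m_n},x)\to\alpha$, where $\alpha:=\omega(x,(x_n))=\limsup_n d(x,x_n)$ is the value of the asymptotic center; weak sequential convergence $x_{m_n}\overset{w}{\to}x$ is preserved. Set
\[
f_N(y):=\frac{1}{N}\sum_{n=1}^{N} d(x_{m_n},y)^p,
\]
whose unique minimum is $b_N=b_p(\tilde{\mu}_N)$. Uniform $p$-convexity (with its power-type modulus, as in all cited examples) delivers the pointwise estimate
\[
d(m(y_1,y_2),z)^p\le\tfrac{1}{2}\bigl(d(y_1,z)^p+d(y_2,z)^p\bigr)-c\,d(y_1,y_2)^p,
\]
which averages over $z=x_{m_n}$ to make $f_N$ uniformly convex with modulus $\omega(r)=cr^p$ independent of $N$. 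Lemma~\ref{lem:unif-conv} then yields
\[
d(x,b_N)^p\le\tfrac{2}{c}\bigl(f_N(x)-f_N(b_N)\bigr),
\]
and since $f_N(x)\to\alpha^p$ by Cesàro averaging together with $f_N(b_N)\le f_N(x)$, the theorem reduces to establishing $\liminf_N f_N(b_N)\ge\alpha^p$.

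For this lower bound, perform a diagonal refinement of the subsequence. Choose a countable dense subset $D$ of a bounded region containing all $x_n$ and all barycenters $b_N$, and extract a further subsequence along which, for every $y\in D$, the Cesàro averages $\frac{1}{N}\sum_n d(y,x_{m_n})^p$ converge to a limit $\varphi(y)$. For $y=x$ this gives $\varphi(x)=\alpha^p$; for $y\ne x$ the Opial property (Corollary~\ref{cor:Opial}) forces $\liminf_n d(y,x_n)>\alpha$, hence $\varphi(y)>\alpha^p$. The standard inequality $|d(z,y_1)^p-d(z,y_2)^p|\le p\,d(y_1,y_2)\bigl(d(z,y_1)+d(z,y_2)\bigr)^{p-1}$ shows the family $\{f_N\}$ is equi-Lipschitz on bounded sets uniformly in $N$, so $\varphi$ extends continuously with $\varphi\ge\alpha^p$ everywhere and $\varphi>\alpha^p$ off $x$.

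The main obstacle is transferring this pointwise lower bound on $\varphi$ to the moving sequence $b_N$. Suppose, for contradiction, that $f_{N_k}(b_{N_k})\le\alpha^p-\eta$ along some subsequence. The barycenters $(b_{N_k})$ are bounded, so they admit a weakly sequentially convergent subsequence $b_{N_k}\overset{w}{\to}b^*$ (using that uniform $p$-convexity implies uniform $\infty$-convexity, hence nearly uniform convexity and reflexivity). When the convergence is strong, the equi-Lipschitz estimate and continuity of $\varphi$ immediately force $\liminf_k f_{N_k}(b_{N_k})=\liminf_k f_{N_k}(b^*)\ge\varphi(b^*)\ge\alpha^p$, contradicting the assumption. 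The delicate case is non-strong convergence of $b_{N_k}$; here I would combine the weak lower semicontinuity of each convex continuous $f_N$ (Proposition~\ref{prop:co-convex-lscts}) with a uniform-on-bounded-sets convergence $f_N\to\varphi$ (obtained from the equi-Lipschitz family on the separable region by an Arzelà–Ascoli-type argument) in order to interchange the limits in $k$ and $N$ and still obtain $\liminf_k f_{N_k}(b_{N_k})\ge\varphi(b^*)\ge\alpha^p$. Once $f_N(b_N)\to\alpha^p$, the reduction above gives $d(b_N,x)\to 0$, and hence the $p$-Banach-Saks property.
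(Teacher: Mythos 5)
Your overall strategy follows the paper's (reduce to showing the averaged energies $f_N(b_N)$ converge to $\alpha^p$, then use uniform convexity of $f_N$ plus Lemma \ref{lem:unif-conv} to conclude $d(b_N,x)\to 0$), and that reduction is sound. But the step you yourself flag as ``the main obstacle'' is where the proof actually lives, and your proposed resolution does not work. Equi-Lipschitz bounds plus pointwise convergence on a dense subset of a \emph{bounded} region do not give uniform convergence on that region: an Arzel\`a--Ascoli argument needs total boundedness of the domain, and the region containing the $x_n$ and the $b_N$ is precisely not compact in the only setting where the theorem has content. (In a Hilbert space the $1$-Lipschitz functions $g_N(y)=\max\{0,1-\|y-e_N\|\}$ converge to $0$ pointwise on the closed unit ball, yet $g_N(e_N)=1$.) The rate of the Ces\`aro convergence $f_N(y)\to\varphi(y)$ can degenerate exactly along a sequence of points escaping weakly --- which is what the $b_N$ may do --- so you cannot conclude $f_{N_k}(b_{N_k})-\varphi(b_{N_k})\to 0$. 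The fallback via weak lower semicontinuity does not repair this: it bounds $\liminf_k f_N(b_{N_k})$ for \emph{fixed} $N$, whereas the whole difficulty is the interchange of the two limits (and Proposition \ref{prop:co-convex-lscts} concerns $\tau_{co}$-convergence, not the weak sequential convergence you extract for $b_{N_k}$). The paper closes exactly this gap with Yokota's dyadic block construction: one shows that the infima $V(\mu_{J^N_k})$ over blocks of size $2^N$ increase to $r^p$ as $N\to\infty$, because by the Opial property (Corollary \ref{cor:Opial}) two far-out blocks have barycenters at definite distance from each other, and uniform convexity of $\Var_{\mu_I,p}$ then forces a quantified increase $V_{N+1}\ge V_N+\delta$ whenever $V_N<(r-\epsilon')^p$. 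Some such quantitative selection of the subsequence is unavoidable; a soft limit interchange cannot replace it.

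A secondary gap: you invoke the additive estimate $d(m(y_1,y_2),z)^p\le\frac12 d(y_1,z)^p+\frac12 d(y_2,z)^p-c\,d(y_1,y_2)^p$ ``with its power-type modulus, as in all cited examples,'' but the theorem assumes only uniform $p$-convexity, whose modulus is multiplicative and need not be of power type. The paper converts the multiplicative gain into an additive modulus for $\Var_{\mu_I,p}$ by first passing to a $2\epsilon$-separated subsequence (the non-separated case being handled by the preceding strong-convergence lemma), which yields the lower bound $V(\mu_I)\ge\frac12\epsilon^p$ needed for that conversion, and by restricting to geodesics in $B_{2R}(x)$ so that $d(y_0,y_1)\ge 3\delta R$ guarantees the hypothesis of uniform $p$-convexity is met. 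You omit the separation reduction entirely, so even the $N$-independent uniform convexity of $f_N$ is not justified as stated.
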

\begin{proof}
By the previous lemma we can assume that $(x_{n})$ is $2\epsilon$-separated
for some $\epsilon>0$, since otherwise there is a strongly convergent
subsequence fulfilling the statement of the theorem. Furthermore,
assume w.l.o.g. that $d(x_{n},x)\to r$. Then 
\[
\lim_{N\to\infty}\frac{1}{N}\sum d(x,x_{n})^{p}\to r^{p}.
\]

For any measure $\mu\in\mathcal{P}_{p}(X)$ define 
\[
\Var_{\mu,p}(y)=\int d(x,y)^{p}d\mu(x)
\]
and 
\[
V(\mu)=\inf_{y\in X}\Var_{\mu,p}(y)
\]
Furthermore, for any finite subset $I\subset\mathbb{N}$ define 
\[
\mu_{I}=\frac{1}{|I|}\sum_{i\in I}\delta_{x_{i}}.
\]
Let $R>0$ be such that $d(x_{n},x)\le R$. Note that $b_{p}(\mu_{I})\in B_{2R}(x)$
for any finite $I\subset\mathbb{N}$. Furthermore, if $(y_{t})_{t\in[0,1]}$
is a geodesic in $B_{2R}(x)$ with $d(y_{0},y_{1})\ge3\delta R$ then
$d(y_{0},y_{1})\ge\delta\mathcal{M}^{p}(d(x_{n},y_{0}),d(x_{n},y_{1}))$
and by uniform $p$-convexity 
\[
d(x_{n},y_{\frac{1}{2}})^{p}\le(1-\tilde{\rho}_{p}(\delta))\left(\frac{1}{2}d(x_{n},y_{0})+\frac{1}{2}d(x_{n},y_{1})\right).
\]
Thus there is a monotone function $\tilde{\omega}:(0,\infty)\to(0,\infty)$
such that
\[
d(x_{n},y_{\frac{1}{2}})^{p}\le(1-\tilde{\omega}(d(y_{0},y_{1}))\left(\frac{1}{2}d(x_{n},y_{0})^{p}+\frac{1}{2}d(x_{n},y_{1})^{p}\right).
\]
This implies 
\[
\Var_{\mu_{I},p}(y_{\frac{1}{2}})\le(1-\tilde{\omega}(d(y_{0},y_{1}))\left(\frac{1}{2}\Var_{\mu_{I},p}(y_{0})+\frac{1}{2}\Var_{\mu_{I},p}(y_{1})\right)
\]

By $2\epsilon$-separation of $(x_{n})$ for any finite $I\subset\mathbb{N}$
there is at most one $i\in I$ such that $d(x_{i},y)\le\epsilon$.
Hence if $|I|\ge2$ 
\[
V(\mu_{I})\ge\frac{1}{2}\epsilon^{p}.
\]
Combining this with the above inequality we see that for $\omega(r)=2\epsilon^{p}\tilde{\omega}(r)$
\[
\Var_{\mu_{I},p}(y_{\frac{1}{2}})\le\frac{1}{2}\Var_{\mu_{I},p}(y_{0})+\frac{1}{2}\Var_{\mu_{I},p}(y_{1})-\frac{1}{4}\omega(d(y_{0},y_{1})).
\]
This implies 
\[
\Var_{\mu_{I},p}(y_{t})\le(1-t)\Var_{\mu_{I},p}(y_{0})+t\Var_{\mu_{I},p}(y_{1})-t(1-t)\omega(d(y_{0},y_{1})).
\]
i.e. the functions $\Var_{\mu_{I},p}:B_{2R}(X)\to\mathbb{R}$ are
uniformly convex with modulus $\omega$. 

The next steps follow directly from the proofs of \cite[Theorem C]{Yokota}
and \cite[Theorem 3.1.5]{Bacak2014a} we include the whole proof for
convenience of the reader.

Step 1: set $I_{k}^{N}=\{(k-1)2^{N},\ldots,k2^{N}\}\subset\mathbb{N}$
for any $k,N\in\mathbb{N}$. We claim that if 
\[
\sup_{N\in\mathbb{N}}\liminf_{k\to\infty}V(\mu_{I_{k}^{N}})=r^{p}
\]
then $b_{n}=b_{p}(\mu_{I_{n}^{0}})$ converges strongly to $x$. To
see this note that for any $\epsilon>0$ there is an $N\in\mathbb{N}$
such that 
\[
\liminf_{k\to\infty}V(I_{k}^{N})\ge r^{p}-\epsilon.
\]
Then 
\begin{eqnarray*}
\liminf_{n\to\infty}\Var_{\mu_{I_{n}^{0}},p}(b_{n}) & \ge & \liminf_{k\to\infty}V(\mu_{I_{k}^{N}})\\
 & \ge & r^{p}-\epsilon.
\end{eqnarray*}
Since $\epsilon>0$ is arbitrary we see that $\liminf_{n\to\infty}\Var_{\mu_{I_{n}^{0}},p}(b_{n})\ge r^{p}$.

By uniform convexity of $\Var_{\mu_{I_{n}^{0}},p}$ and Lemma \ref{lem:unif-conv}
we also have 
\[
\Var_{\mu_{I_{n}^{0}},p}(x)\ge\Var_{\mu_{I_{n}^{0}},p}(b_{n})+\frac{1}{2}\omega(d(x,b_{n})).
\]
Since the left hand side converges to $r^{p}$ and $\limsup_{n\to\infty}\Var_{\mu_{I_{n}^{0}},p}(b_{n})\ge r^{p}$.
This implies that $\limsup\omega(d(x,b_{n}))\to0$, i.e. $d(x,b_{n})\to0$.

Step 2: We will select a subsequence of $(x_{n})$ such the assumption
of the claim in Step 1 are satisfied. Set $J_{k}^{0}=\{k\}$ for $k\in\mathbb{N}$.
We construct a sequence of set $J_{k}^{N}$ for $N\in\mathbb{N}$
of cardinality $2^{N}$ such that $J_{k}^{N}=J_{l}^{N-1}\cup J_{m}^{m-1}$
for some $m,l\in N$. Furthermore,$\max J_{k}^{N}<J_{k+1}^{N}$ and
\[
\lim_{k\to\infty}V(\mu_{J_{k}^{N}})=V_{N}:=\limsup_{l,m\to\infty}V(\mu_{J_{l}^{N-1}\cup J_{m}^{N-1}}).
\]
It is not difficult to see that $V_{N}\le V_{N+1}\le r^{p}$ for every
$N\in\mathbb{N}$. We will show that $V_{N}\to r^{p}$ as $N\to\infty$.
It is not difficult to see that this follows from the claim below.
\begin{claim*}
For every $\epsilon'>0$ there exists a $\delta>0$ such that whenever
$V_{N}<(r-\epsilon)^{p}$ then $V_{N+1}>V_{N}+\delta$.\end{claim*}
\begin{proof}
[Proof of claim] Fix $N\in\mathbb{N}$ and for $l\in\mathbb{N}$
let $b_{l}^{N}$ be the $p$-barycenter of $\mu_{J_{k}^{N}}$. By
assumption there is an $l\in\mathbb{N}$ such that $V(\mu_{J_{k}^{N}})<(r-\epsilon')^{p}$
for all $k\ge l$. By the Opial property, Corollary \ref{cor:Opial},
there is a large $m>l$ such that $d(b_{l}^{N},x_{i})>r$ for $i\in J_{m}^{N}$. 

This implies 
\[
\frac{1}{2^{N}}\sum_{i\in J_{m}^{N}}d(b_{l}^{N},x_{i})^{p}>r^{p}>(r-\epsilon)^{p}>V(\mu_{J_{m}^{N}})=\frac{1}{2^{N}}\sum_{i\in J_{m}^{N}}d(b_{m}^{N},x_{i})
\]
and hence 
\[
2\max\left\{ d(b_{m}^{N},b_{m\cup l}^{N}),d(b_{l}^{N},b_{m\cup l}^{N})\right\} \ge d(b_{m}^{N},b_{l}^{N})>\epsilon'
\]
where $b_{m\cup l}^{N}$ is the $p$-barycenter of $\mu_{J_{m}^{N}\cup J_{l}^{N}}$.
By uniform convexity of $\Var_{\mu_{I},p}$ and Lemma \ref{lem:unif-conv}
we get 
\begin{eqnarray*}
V(\mu_{J_{l}^{N}\cup J_{m}^{N}}) & = & \frac{1}{2^{N+1}}\left(\sum_{i\in J_{l}^{N}}d(b_{l\cup m}^{N},x_{i})^{p}+\sum_{i\in J_{l}^{N}}d(b_{l\cup m}^{N},x_{i})^{p}\right)\\
 & \ge & \frac{1}{2}\left[V(\mu_{J_{l}^{N}})+V(\mu_{J_{m,}^{N}})+\omega(d(b_{m}^{N},b_{m\cup l}^{N}))+\omega(d(b_{l}^{N},b_{m\cup l}^{N}))\right]\\
 & \ge & \frac{1}{2}\left[V(\mu_{J_{l}^{N}})+V(\mu_{J_{m,}^{N}})+2\omega(\epsilon')\right].
\end{eqnarray*}
 
\end{proof}
To finish the proof of the theorem, note that $\bigcap_{N}\bigcup_{k}J_{k}^{N}\subset\mathbb{N}$
is infinite. Denoting its elements in increasing order $(n_{1},n_{2},\ldots)$
we see that the sequence $(x_{n_{k}})$, after naming, satisfies the
assumption needed in Step 1.\end{proof}
\begin{cor}
Assume $(X,d)$ is uniformly $p$-convex and that for any sequence
$(x_{n})$ the $p$-barycenter of $\mu_{I}$ for any finite $I\subset\mathbb{N}$
is in the convex hull of the point $\{x_{i}\}_{i\in I}$. Then whenever
$x_{n}\overset{w}{\to}x$ implies $x\in\overline{\conv(x_{n})}$.
In particular, it holds $x_{n}\overset{\tau_{co}}{\to}x$, i.e. the
co-convex topology is weaker that weak sequential topology.\end{cor}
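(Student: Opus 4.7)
The plan is to combine Theorem \ref{thm:Banach-Saks} with the sequential characterization of $\tau_{co}$-convergence given in Lemma \ref{lem:convex-leaving}. By that lemma, to deduce $x_{n}\overset{\tau_{co}}{\to}x$ it is enough to show $x\in\overline{\conv(x_{n'})}$ for every subsequence $(x_{n'})$ of $(x_{n})$.

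First I would record the elementary hereditary property of weak sequential convergence: since $x$ is by definition the asymptotic center of every subsequence of $(x_{n})$, it is in particular the asymptotic center of every subsequence of any fixed subsequence $(x_{n'})$, so $x_{n'}\overset{w}{\to}x$ as well. This legitimizes applying the Banach--Saks theorem directly to the arbitrary subsequence we need to control.

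Next, for that fixed subsequence $(x_{n'})$, Theorem \ref{thm:Banach-Saks} yields a further subsequence $(x_{m_{n}})$ along which the $p$-barycenters $b_{N}:=b_{p}(\tilde{\mu}_{N})$ of $\tilde{\mu}_{N}=\frac{1}{N}\sum_{n=1}^{N}\delta_{x_{m_{n}}}$ converge strongly to $x$. By the standing convex-hull hypothesis, each $b_{N}$ lies in $\conv\{x_{m_{1}},\ldots,x_{m_{N}}\}\subset\conv(x_{n'})$, and closedness of $\overline{\conv(x_{n'})}$ then forces the strong limit $x$ into $\overline{\conv(x_{n'})}$. Since $(x_{n'})$ was arbitrary, Lemma \ref{lem:convex-leaving} closes the argument and delivers $x_{n}\overset{\tau_{co}}{\to}x$.

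There is no genuine obstacle here: essentially all of the technical work was done in Theorem \ref{thm:Banach-Saks}, and this corollary is a short combination of that theorem, the hypothesis on barycenters lying in finite convex hulls, and the characterization of $\tau_{co}$-limits by closed convex hulls of subsequences. The only point of care is feeding the correct subsequence into the Banach--Saks construction, which is why I would explicitly note the hereditary property of $\overset{w}{\to}$ before invoking the theorem.
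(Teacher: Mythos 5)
Your argument is correct and is precisely the proof the paper intends (the corollary is left without an explicit proof, but the same chain --- Banach--Saks along an arbitrary subsequence, barycenters in finite convex hulls, then Lemma \ref{lem:convex-leaving} --- is sketched in the remark following the definition of weak sequential convergence). Your explicit note that weak sequential convergence passes to subsequences is a worthwhile detail that the paper glosses over.
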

\begin{rem*}
The assumption of the corollary are satisfied on any $CAT(0)$-space,
see \cite[Lemma 2.3.3]{Bacak2014a} for the $2$-barycenter. More
generally Kuwae's condition $(\mathbf{A})$ is enough as well, see
\cite[Remark 3.7 (2)]{Kuwae2013}. In particular, it holds on all
spaces that satisfy Jensen's inequality.
\end{rem*}
\appendix

\section*{Generalized Convexities}

Let $L:(0,\infty)\to(0,\infty)$ be a strictly increasing convex function
such that $L(1)=1$ and $L(r)\to0$ as $r\to0$. Then $L$ has the
following form 
\[
L(r)=\int_{0}^{r}\ell(s)ds
\]
where $\ell$ is a positive monotone function. As an abbreviation
we also set $L_{\lambda}(r)=L(\frac{r}{\lambda})$ for $\lambda>0$.

Given $L$ we define the $L$-mean of two non-negative numbers $a,b\in[0,\infty)$
as follows
\[
\mathcal{M}^{L}(a,b)=L^{-1}\left(\frac{1}{2}L\left(a\right)+\frac{1}{2}L\left(b\right)\right)\left\{ t>0\,|\,\frac{1}{2}L\left(\frac{a}{t}\right)+\frac{1}{2}L\left(\frac{b}{t}\right)\le1\right\} .
\]
 
\begin{defn}
[$L$-convexity] A metric space admitting midpoints is said to be
$L$-convex if for any triple $x,y,z\in X$ it holds 
\[
d(m(x,y),z)\le\mathcal{M}^{L}(d(x,z),d(y,z)).
\]
If the inequality is strict whenever $x\ne y$ then the space is said
to be strictly $L$-convex.
\end{defn}
In a similar way one can use a more elaborate definition of mean:
For $L$ as above define the Orlicz mean 
\[
\tilde{\mathcal{M}}^{L}(a,b)=\inf\left\{ t>0\,|\,\frac{1}{2}L\left(\frac{a}{t}\right)+\frac{1}{2}L\left(\frac{b}{t}\right)\le1\right\} .
\]
 Now Orlicz $L$-convexity can be defined by using $\tilde{\mathcal{M}}^{L}$
instead of $\mathcal{M}^{L}$. It is not clear if this definition
is meaningful. The existence theorem for Orlicz-Wasserstein barycenters
below only uses $L$-convexity.

It is easy to see that for $p\in(1,\infty)$ (strict) $p$-convexity
is the same as (Orlicz) $L$-convexity for $L(r)=r^{p}$. However,
because $L$ needs to be strictly convex, the cases $1$-convexity
and $\infty$-convexity are not covered, but can be obtained as limits.
Strict convexity of $L$ also implies that the inequality above is
strict whenever $d(x,y)=|d(x,z)-d(y,z)|$, i.e. the condition $d(x,y)>|d(x,z)-d(y,z)|$
is not needed for strict $L$-convexity.
\begin{lem}
Suppose $\Phi$ is a convex function with $\Phi(1)=1$ and $\Phi(r)\to0$
as $r\to0$. Then any (strictly) (Orlicz) $L$-convex metric space
is (strictly) (Orlicz) $\Phi\circ L$-convex and (strictly) $\infty$-convex.
Also, any (strictly) $1$-convex space is strictly (Orlicz) $L$-convex.
\end{lem}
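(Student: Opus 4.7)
The plan is to reduce every claim to a pointwise comparison of the relevant means and then propagate the inequality $d(m(x,y),z)\le(\cdot)$ through the assumed convexity of $(X,d)$.

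For the passage from $L$ to $\Phi\circ L$, Jensen's inequality for the convex function $\Phi$ gives
\[
\Phi\bigl(\tfrac{1}{2}L(a)+\tfrac{1}{2}L(b)\bigr)\le\tfrac{1}{2}\Phi(L(a))+\tfrac{1}{2}\Phi(L(b)),
\]
so applying the increasing map $(\Phi\circ L)^{-1}$ yields $\mathcal{M}^{L}(a,b)\le\mathcal{M}^{\Phi\circ L}(a,b)$. For the Orlicz variant, fix $t\ge\tilde{\mathcal{M}}^{\Phi\circ L}(a,b)$; then $\frac{1}{2}\Phi(L(a/t))+\frac{1}{2}\Phi(L(b/t))\le 1=\Phi(1)$, and Jensen combined with the strict monotonicity of $\Phi$ (inherited from its convexity together with $\Phi(1)=1$ and $\Phi(r)\to 0$) forces $\frac{1}{2}L(a/t)+\frac{1}{2}L(b/t)\le 1$, whence $t\ge\tilde{\mathcal{M}}^{L}(a,b)$. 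So $\tilde{\mathcal{M}}^{L}\le\tilde{\mathcal{M}}^{\Phi\circ L}$, and the assumed (Orlicz) $L$-convexity transfers through these inequalities. Strictness is preserved because it already appears at the first step.

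For $\infty$-convexity, substituting $t=\max\{a,b\}$ into the defining inequality of $\tilde{\mathcal{M}}^{L}$ and using only $L(1)=1$ and monotonicity gives $\tilde{\mathcal{M}}^{L}(a,b)\le\max\{a,b\}$; the same bound for $\mathcal{M}^{L}$ follows from $L^{-1}(\frac{1}{2}L(a)+\frac{1}{2}L(b))\le L^{-1}(L(\max\{a,b\}))$. If $a\ne b$, strict monotonicity of $L$ turns these into strict inequalities, while if $a=b$ then $\mathcal{M}^{L}=\tilde{\mathcal{M}}^{L}=a=\max\{a,b\}$ and strict $L$-convexity supplies $d(m(x,y),z)<\max\{a,b\}$ directly whenever $x\ne y$.

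Finally, for the implication strict $1$-convex $\Rightarrow$ strict (Orlicz) $L$-convex, I compare with $\mathcal{M}^{1}(a,b)=\frac{a+b}{2}$. Strict convexity of $L$ combined with Jensen gives $\mathcal{M}^{1}(a,b)<\mathcal{M}^{L}(a,b)$ whenever $a\ne b$; for the Orlicz mean, plugging $t=\frac{a+b}{2}$ into the defining condition yields $\frac{1}{2}L(\frac{2a}{a+b})+\frac{1}{2}L(\frac{2b}{a+b})>L(1)=1$ by strict Jensen, so $\tilde{\mathcal{M}}^{L}(a,b)>\frac{a+b}{2}$. A case split closes the argument: if $d(x,z)\ne d(y,z)$, then non-strict $1$-convexity already gives $d(m(x,y),z)\le\mathcal{M}^{1}<\mathcal{M}^{L}$ (and $<\tilde{\mathcal{M}}^{L}$); if $d(x,z)=d(y,z)>0$ and $x\ne y$, then $d(x,y)>0=|d(x,z)-d(y,z)|$ triggers strict $1$-convexity, so $d(m(x,y),z)<\mathcal{M}^{1}=\mathcal{M}^{L}=\tilde{\mathcal{M}}^{L}$. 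The corresponding non-strict statement ($1$-convex $\Rightarrow$ (Orlicz) $L$-convex) is immediate from $\mathcal{M}^{1}\le\mathcal{M}^{L},\tilde{\mathcal{M}}^{L}$. No step is expected to be a real obstacle; the only mildly subtle point is the strict monotonicity of $\Phi$ used in the Orlicz comparison, which is secured by the normalization assumptions.
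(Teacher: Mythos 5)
Your proof is correct and is exactly the argument the paper has in mind: the paper offers no written proof beyond the remark that the lemma ``follows directly from convexity of $\Phi$,'' and your pointwise comparison of the means $\mathcal{M}^{L}\le\mathcal{M}^{\Phi\circ L}$, $\tilde{\mathcal{M}}^{L}\le\tilde{\mathcal{M}}^{\Phi\circ L}$, $\mathcal{M}^{L}\le\max$ and $\mathcal{M}^{1}\le\mathcal{M}^{L}$ via Jensen, together with the case split $d(x,z)\ne d(y,z)$ versus $d(x,z)=d(y,z)$ for strictness, is the natural fleshing-out of that remark. The only caveat is that strict monotonicity of $\Phi$ near $0$ is not literally forced by $\Phi(1)=1$ and $\Phi(r)\to0$ (e.g.\ $\Phi(r)=\max\{0,2r-1\}$), but this is an implicit admissibility assumption on $\Phi\circ L$ in the paper's statement rather than a gap in your argument.
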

The proof of this lemma follows directly from convexity of $\Phi$.
Similarly one can define uniform convexity.
\begin{defn}
[uniform $p$-convexity]A strictly $L$-convex metric space is said
to be uniformly $L$-convex if for all $\epsilon>0$ there is a $\rho_{L}(\epsilon)\in(0,1)$
such that for all triples $x,y,z\in X$ satisfying $d(x,y)>\epsilon\mathcal{M}^{L}(d(x,z),d(y,z))$
it holds
\[
d(m(x,y),z)\le(1-\rho_{L}(\epsilon))\mathcal{M}^{L}(d(x,z),d(y,z)).
\]

\end{defn}
Using $L$ one can also define an Orlicz-Wasserstein space $(\mathcal{P}_{L}(X),w_{L})$,
see \cite{Sturm2011} and \cite[Appendix]{Kell2013a} for precise
definition and further properties. Since $L(1)=1$ the natural embedding
$x\to\delta_{x}$ is an isomorphism. For $\mu\in\mathcal{P}_{L}(X)$
and $y\in X$ the metric $w_{L}$ has the following form
\[
w_{L}(\mu,y)=\inf\{t>0\,|\,\int L\left(\frac{d(x,y)}{t}\right)d\mu(x)\le1\}.
\]
Note that by \cite{Sturm2011} the infimum is attained if $w_{L}(\mu,y)>0$.

Now the $L$-barycenter $b_{L}(\mu)$ of a measure $\mu\in\mathcal{P}_{L}(X)$
can be defined as 
\[
b_{L}(\mu)=\argmin_{y\in X}w_{L}(\mu,\delta_{y}).
\]

\begin{thm}
Assume $(X,d)$ is reflexive and strictly $L_{\lambda}$-convex for
any $\lambda>0$. Then any measure $\mu\in\mathcal{P}_{L}(X)$ admits
a unique barycenter.\end{thm}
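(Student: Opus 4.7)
The plan is to mimic the existence/uniqueness arguments used earlier for the $p$-barycenter, replacing the $p$-convex structure by the scaled families $L_\lambda$. The key observation that drives everything is that the sublevel sets of $y\mapsto w_L(\mu,\delta_y)$ at level $r$ are precisely the sets on which one should apply $L_r$-convexity, because the defining condition $\int L(d(x,y)/r)\,d\mu(x)\le 1$ is the integrated form of $L_r$ applied to $d(x,y)$. So the hypothesis of strict $L_\lambda$-convexity for every $\lambda>0$ is not a decoration; it is exactly what is needed to convexify those sublevels.

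First I would set $m_\mu=\inf_{y\in X}w_L(\mu,\delta_y)$ and, for each $r>m_\mu$,
\[
A_r=\{y\in X\,|\,w_L(\mu,\delta_y)\le r\}.
\]
The set $A_r$ is non-empty by definition of the infimum, and closed by (lower semi)continuity of $y\mapsto w_L(\mu,\delta_y)$. For convexity, take $y_0,y_1\in A_r$ with midpoint $y_{1/2}$. Since the infimum in the definition of $w_L$ is attained (as cited from Sturm), we have $\int L(d(x,y_i)/r)\,d\mu(x)\le 1$ for $i=0,1$. Strict $L_r$-convexity yields, for every $x$,
\[
L\!\left(\tfrac{d(x,y_{1/2})}{r}\right)\le \tfrac12 L\!\left(\tfrac{d(x,y_0)}{r}\right)+\tfrac12 L\!\left(\tfrac{d(x,y_1)}{r}\right),
\]
and integrating gives $\int L(d(x,y_{1/2})/r)\,d\mu\le 1$, hence $y_{1/2}\in A_r$. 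So $A_r$ is midpoint-convex, and being closed in a geodesic space it is convex. For boundedness, observe that the map $y\mapsto\delta_y$ is an isometric embedding (because $L(1)=1$), so for any fixed $y_0\in A_r$ and any $y\in A_r$,
\[
d(y_0,y)=w_L(\delta_{y_0},\delta_y)\le w_L(\delta_{y_0},\mu)+w_L(\mu,\delta_y)\le 2r.
\]
The family $(A_r)_{r>m_\mu}$ is decreasing, so reflexivity forces $\bigcap_{r>m_\mu}A_r\ne\varnothing$, and any element of this intersection is a barycenter.

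For uniqueness, suppose $y_0\ne y_1$ are both minimizers, so $w_L(\mu,\delta_{y_i})=m_\mu$. If $m_\mu=0$ then $\mu=\delta_{y_0}=\delta_{y_1}$, contradicting $y_0\ne y_1$, so we may assume $m_\mu>0$. The attained infimum gives $\int L(d(x,y_i)/m_\mu)\,d\mu\le 1$. Strict $L_{m_\mu}$-convexity now produces the strict inequality
\[
L\!\left(\tfrac{d(x,y_{1/2})}{m_\mu}\right)<\tfrac12 L\!\left(\tfrac{d(x,y_0)}{m_\mu}\right)+\tfrac12 L\!\left(\tfrac{d(x,y_1)}{m_\mu}\right)
\]
for every $x\in X$, since $y_0\ne y_1$. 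Integrating against $\mu$ yields $\int L(d(x,y_{1/2})/m_\mu)\,d\mu<1$; since $t\mapsto\int L(d(x,y_{1/2})/t)\,d\mu$ is continuous and strictly decreasing where it is positive, there is some $t<m_\mu$ still satisfying $\int L(d(x,y_{1/2})/t)\,d\mu\le 1$. This gives $w_L(\mu,\delta_{y_{1/2}})<m_\mu$, contradicting the definition of $m_\mu$.

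The only genuine subtlety, and the step I would want to check carefully, is the interplay between the $\inf$-definition of $w_L$ and the pointwise strict inequality from $L_{m_\mu}$-convexity, namely the claim that the value $m_\mu$ is actually attained in the defining infimum, so that $\int L(d(x,y_0)/m_\mu)\,d\mu\le 1$ is available. That attainment is exactly the fact cited from Sturm's construction of the Orlicz-Wasserstein space; once it is in hand, both existence and uniqueness proceed in parallel with the $p$-barycenter case, with the scaling $\lambda=r$ (respectively $\lambda=m_\mu$) being the reason the hypothesis is stated uniformly in $\lambda>0$ rather than just for $\lambda=1$.
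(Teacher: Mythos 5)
Your proposal follows essentially the same route as the paper: you use strict $L_\lambda$-convexity at the scale $\lambda=r$ (resp. $\lambda=m_\mu$), integrate against $\mu$ to get convexity (resp. strict quasi-convexity) of the sublevels of $y\mapsto w_L(\mu,\delta_y)$, obtain boundedness from the triangle inequality for $w_L$ and existence from reflexivity, and uniqueness from the strict inequality. The subtlety you flag (attainment of the infimum in the Luxemburg-type definition of $w_L$, needed both to enter the convexity inequality at level $r$ and to convert the strict integrated inequality into $w_L(\mu,\delta_{y_{1/2}})<m_\mu$) is exactly the point the paper handles by citing Sturm, so the two arguments match step for step.
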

\begin{rem*}
Since $L(r)=r^{p}$ is homogeneous, one sees that $(X,d)$ is strictly
$ $$L$-convex iff it is strictly $L_{\lambda}$-convex for some
$\lambda>0$.\end{rem*}
\begin{proof}
By our assumption we see that for any $\lambda>0$ 
\[
\int L\left(\frac{d(x,y_{\frac{1}{2}})}{\lambda}\right)d\mu(x)\le\frac{1}{2}\int L\left(\frac{d(x,y_{0})}{\lambda}\right)d\mu(x)+\frac{1}{2}\int L\left(\frac{d(x,y_{1})}{\lambda}\right)d\mu(x).
\]
with strict inequality whenever $y_{0}\ne y_{1}$. Hence, if $F_{\mu}(y_{0}),F_{\mu}(y_{1})\le\Lambda$
then 
\[
\int L\left(\frac{d(x,y_{\frac{1}{2}})}{\Lambda}\right)d\mu(x)\le1,
\]
i.e. $F_{\mu}(y_{\frac{1}{2}})\le\Lambda$. Furthermore, the strict
inequality is strict if $y_{0}\ne y_{1}$, i.e. $F_{\mu}$ is strictly
quasi-convex and can have at most one minimizer.

This now implies that the sublevels of $F_{\mu}$ are convex. Closedness
follows from continuity of $F_{\mu}$. In order to see that they are
also bounded, just note that $w_{L}$ is a metric, i.e. implies that
$|w_{L}(\mu,\delta_{y_{0}})-w_{L}(\delta_{y_{0}},\delta_{y})|\le w_{L}(\mu,\delta_{y})$.
Thus $F_{\mu}(y_{0})\le R$ for all $y\in X\backslash B_{2R}(y)$
it holds 
\[
F_{\mu}(y)=w_{L}(\mu,\delta_{y})>R.
\]

reflexivity implies now existence of $L$-barycenters. 
\end{proof}
In a similar way one can obtain a Banach-Saks theorem for spaces which
are uniformly $L_{\lambda}$-convex for each $\lambda>0$ such that
the moduli $(\rho_{L_{\lambda}})_{\lambda\in(0,\infty)}$ are equicomparable
for compact subsets of $(0,\infty)$. The proof then follows along
the line of Theorem \ref{thm:Banach-Saks}.

\bibliographystyle{amsalpha}
\bibliography{bib}

\end{document}